\documentclass[11pt,reqno]{amsart}

\usepackage{amssymb}
\usepackage{amsmath}
\usepackage[latin1]{inputenc}
\usepackage{dsfont}
\usepackage{hyperref}
\usepackage{fullpage}
\usepackage{pdfsync}

\newtheorem{theorem}{Theorem}[section]
\newtheorem{lemma}[theorem]{Lemma}
\newtheorem{proposition}[theorem]{Proposition}
\newtheorem{corollary}[theorem]{Corollary}

\newtheorem{remark}[theorem]{Remark}

\begin{document}
\title{On consecutive values of random completely multiplicative functions}
\author{Joseph Najnudel}
\date{\today}
\maketitle

\begin{abstract}
In this article, we study the behavior of consecutive values of random completely multiplicative functions $(X_n)_{n \geq 1}$ whose values are i.i.d. at primes. We prove that for $X_2$ uniform on the unit circle, or uniform on the set of roots of unity of a given order, and for fixed $k \geq 1$, $X_{n+1}, \dots, X_{n+k}$ are independent if $n$ is large enough. Moreover, with the same assumption, we prove the almost sure convergence of the empirical measure $N^{-1} \sum_{n=1}^N \delta_{(X_{n+1}, \dots, X_{n+k})}$ when $N$ goes to infinity, with an estimate of the rate of convergence. At the end of the paper, we also show that for any probability distribution on the unit circle followed by $X_2$,  the empirical measure converges almost surely when $k=1$.

\end{abstract}
\section{Introduction}
Many arithmetic functions of interest are multiplicative, i.e. their value at $mn$ is the product of their values at $m$ and $n$, for all coprime integers
$m, n \geq 1$. For example, it is the case for the M\"obius function, which is defined by  $\mu(n) = 0$ if $n \geq 1$ is divisible by the square of at least one prime number, 
$\mu(n) = 1$ if $n$ is the product of an even number of distinct primes, and $\mu(n) = -1$ if $n$ is the product of an odd number of distinct primes. 
Similarly, Dirichlet characters are multiplicative, as well as the Liouville function, which is equal to $(-1)^k$ on integers with $k$ prime factors, counted with multiplicity: these functions 
are even completely multiplicative, which means that their value at $mn$ is the product of their values at $m$ and $n$ for all integers $m, n \geq 1$. 
 The behavior of the M\"obius and the Liouville functions is far from being known with complete accuracy, even if partial results have been proven. This difficulty can be encoded by the corresponding Dirichlet series, which involve the Riemann zeta function. For example, the partial sum, up to $x$, 
of the M\"obius function in known to be 
negligible with respect to $x$, and it is conjectured to be negligible with respect to $x^{r}$ for all $r > 1/2$: the first statement can quite easily be proven to be equivalent to the prime number theorem, whereas the second is equivalent to the Riemann hypothesis. 

It has been noticed that the same bound $x^r$ for all $r > 1/2$ is obtained if we take the partial sums of i.i.d., bounded and centered random variables. This suggests the naive idea to compare the M\"obius function on square-free integers with i.i.d. random variables on $\{-1,1\}$. However, a major difference between the two situations is that in the random case, we lose the multiplicativity of the function. 
A less naive randomized version of M\"obius functions can be obtained as follows: one takes i.i.d. uniform random variables on $\{-1,1\}$ on prime numbers, $0$ on prime powers of order larger than or equal to $2$, and one completes the definition by multiplicativity. 

In \cite{W}, Wintner considers a completely multiplicative function with i.i.d. values at primes, uniform on $\{-1,1\}$ (which corresponds to a randomized version of the Liouville function rather than the M\"obius function), and proves that we have almost surely the same bound $x^{r}$ ($r > 1/2$) for the partial sums, as for the sums of i.i.d. random variables, or for the partial sums of M\"obius function if the Riemann hypothesis is true. The estimate in \cite{W} has been refined by Hal\'asz in \cite{Hal}, and then by Lau, Tenenbaum and 
Wu in \cite{LTW}. Some lower bounds can also be deduced from moment estimates by Harper \cite{Harper}. 
In order to get more general results, it can be useful to consider complex-valued random multiplicative functions. For example, it has been proven by Bohr and Jessen \cite{BJ} that for $\sigma >1/2$, 
the law of $\zeta(\sigma + iTU)$, for $U$ uniformly distributed on $[0,1]$, tends to a limiting random variable when $T$ goes to infinity. 
This limiting random variable can be written as $\sum_{n \geq 1} X_n n^{-\sigma}$, when $(X_n)_{n \geq 1}$ is a random completely multiplicative function such that $(X_p)_{p \in \mathcal{P}}$ are i.i.d. uniform on the unit circle, $\mathcal{P}$ denoting, as in all the sequel of the present paper, the set of prime numbers. The fact that the series just above converges is a direct consequence (by partial summation) of the analog of the result of Wintner for the partial sums of $(X_n)_{n \geq 1}$: one can prove that almost surely, $\sum_{n \leq x} X_n = o(x^r)$ for $r > 1/2$.  

This discussion shows that it is often much less difficult to prove accurate results for random multiplicative function than for the arithmetic functions which are usually considered. In some informal sense, the arithmetic difficulties are diluted into the randomization, which is much simpler to deal with. 

In the present paper, we study another example of results which are stronger and less difficult to prove in the random setting than in the deterministic one. 
The example we detail in this article is motivated by the following question, initially posed in the deterministic setting: for $k \geq 1$, what can we say about the distribution of the $k$-tuples
$(\mu(n+1), \dots, \mu(n+k))$, or $(\lambda(n+1), \dots, \lambda(n+k))$, where $\mu$ and $\lambda$ are the M\"obius and the Liouville functions, 
$n$ varies from $1$ to $N$, $N$ tends to infinity? This question is only very partially solved. One knows (it is essentially a consequence of the prime number theorem), that for $k = 1$, the proportion of integers such that $\lambda$ is equal to $1$ or $-1$ tends to $1/2$. For the M\"obius function, the  limiting proportions are $3/\pi^2$ for $1$ or $-1$ and $1 - (6/\pi^2)$ for $0$. 
It has been proven by Hildebrand \cite{Hil} that for $k=3$, the eight possible values 
of $(\lambda(n+1), \lambda(n+2), \lambda(n+3))$ appears infinitely often. This result has been improved by Matom\"aki, Radziwi\l \l \, and Tao \cite{MRT}, who prove that these eight values  
 appear with a positive lower density: in other words, for all $(\epsilon_1, \epsilon_2, \epsilon_3) \in \{-1,1\}^3$,
 $$\underset{N \rightarrow \infty}{\lim \inf}  \frac{1}{N} \sum_{n=1}^N 
 \mathds{1}_{\lambda(n+1) = \epsilon_1, 
 \lambda(n+2) = \epsilon_2,
 \lambda(n+3) = \epsilon_3} > 0.$$
 The similar result is proven for the nine possible values of $(\mu(n+1), \mu(n+2))$. A conjecture by Chowla \cite{Ch} states that for all $k \geq 1$, each possible pattern of $(\lambda(n+1), \dots, \lambda(n+k))$ appears with asymptotic density $2^{-k}$.  This conjecture is still open, however, partial results have been recently proven, in particular in papers by Tao and Ter\"av\"ainen (\cite{TT1}, \cite{TT2}, \cite{TT3}). 

In the present paper, we prove results similar to this conjecture for random completely multiplicative functions $(X_n)_{n \geq 1}$. The random functions we will consider take i.i.d. values on the unit circle on prime numbers. Their distribution is then 
entirely determined by the distribution of $X_2$. The two particular cases we will study in the largest part of the paper are the following: $X_2$ is uniform on the unit circle $\mathbb{U}$, and $X_2$ is uniform on the set $\mathbb{U}_q$ of $q$-th roots of unity, for $q \geq 2$. In this case, we will show the following results: for all $k \geq 1$, and for all $n \geq 1$ large enough depending on $k$, the variables $X_{n+1}, \dots, X_{n+k}$ are independent, and exactly i.i.d. uniform on the unit circle if $X_2$ is uniform. Moreover, the empirical distribution 
$$\frac{1}{N} \sum_{n=1}^N \delta_{(X_{n+1}, \dots, X_{n+k})}$$
tends almost surely to the uniform distribution on $\mathbb{U}^k$ if $X_2$ is uniform on $\mathbb{U}$, and to the uniform distribution on $\mathbb{U}_q^k$ if $X_2$ is uniform on $\mathbb{U}_q$. 
In particular, the analog of Chowla's conjecture holds almost surely in the case where $X_2$ is uniform on $\{-1,1\}$. We have also an estimate on the speed of convergence of the empirical measure: in the case of the uniform distribution 
on $\mathbb{U}_q$, each of the 
$q^k$ possible patterns for $(X_{n+1}, \dots, X_{n+k})$ almost surely occurs with a proportion $q^{-k} + O(N^{-t})$ for $n$ running between $1$ and $N$, for all $t < 1/2$. We have a similar result in the uniform case, if the test functions we consider are sufficiently smooth. 
It would be interesting to have similar results when the distribution of $X_2$ on the unit circle is not specified. For $k \geq 2$, we are unfortunately not able to show similar results, but we nevertheless can prove that the empirical distribution of $X_n$ almost surely converges to a limiting distribution for any distribution of $X_2$ on the unit circle. We specify this distribution, which is always uniform on $\mathbb{U}$ or uniform on $\mathbb{U}_q$ for some $q \geq 1$, and in the latter case, we give an estimate of the rate of convergence. This rate corresponds to a negative power of $\log N$, which is much slower than what we obtain when  $X_2$ is uniform on $\mathbb{U}_q$. 

The techniques we use in our proofs are  elementary in general, mixing classical tools in probability theory and number theory. However, a part of our arguments need to use deep  results on diophantine equations, in order to bound the number and the size of their solutions. 

The sequel of the paper is organized as follows. In Sections \ref{uniform} and \ref{uniformq}, we study the law 
of $(X_{n+1}, \dots, X_{n+k})$ for $n$ large depending on $k$, first in the case where $X_2$ is uniform on $\mathbb{U}$, then in the case where $X_2$ is uniform on $\mathbb{U}_q$. In Section \ref{uniform2}, we study the empirical measure of 
$(X_{n+1}, \dots, X_{n+k})$ in the case of $X_2$ uniform on $\mathbb{U}$. In the proof of the convergence of this empirical measure, we need to estimate the second moment of sums of the form $\sum_{n=N'+1}^N \prod_{j=1}^k X_{n+j}^{m_j}$. The problem of estimating moments of order different from two for such sums is discussed in Section \ref{moments}. The proof of convergence of empirical measure in the case of uniform variables on $\mathbb{U}_q$ is given in Section \ref{uniform2q}. 
Finally, we consider the case of a general distribution for $X_2$ in Section \ref{general}.

\section{Independence in the uniform case} \label{uniform}

In this section, we suppose that $(X_p)_{p \in 
\mathcal{P}}$ are i.i.d. uniform random variables on the unit circle. By convenience, we will extend our multiplicative function to positive rational numbers by  setting $X_{p/q} := X_p/X_q$: the result is independent
of the choice of $p$ and $q$, and we have $X_r X_s = X_{rs}$ for all rationals $r, s > 0$. Moreover, 
$X_r$  is uniform on the unit circle for all positive rational $r \neq 1$. 
In this section, we will show that for fixed $k \geq 1$, 
$(X_{n+1},\dots, X_{n+k})$ are independent if $n$ is sufficiently large. The following result gives 
a criterion for such independence: 
\begin{lemma} \label{1.1}
 For all $n, k \geq 1$, the variables $(X_{n+1},\dots, X_{n+k})$ are independent   if and only if 
 $\log(n+1), \dots, \log(n+k)$ are linearly independent on $\mathbb{Q}$.
\end{lemma}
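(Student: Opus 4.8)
The plan is to reduce the independence of the $\mathbb{U}$-valued variables $X_{n+1},\dots,X_{n+k}$ to a statement about their joint Fourier coefficients, and then to compute those coefficients explicitly using complete multiplicativity together with the independence of $(X_p)_{p\in\mathcal{P}}$.

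First I would record the following harmonic-analytic criterion. Each $X_{n+j}$ is uniform on $\mathbb{U}$ (since $n+j\ge 2$), so the random vector $(X_{n+1},\dots,X_{n+k})$ is independent if and only if its law is the Haar measure on the compact group $\mathbb{U}^k$; by Fourier inversion on $\mathbb{U}^k$ this is in turn equivalent to
$$\mathbb{E}\Big[\prod_{j=1}^k X_{n+j}^{m_j}\Big]=0\qquad\text{for every }(m_1,\dots,m_k)\in\mathbb{Z}^k\setminus\{0\}.$$
The ``only if'' direction is immediate, since independence together with uniformity of each marginal forces $\mathbb{E}[\prod_j X_{n+j}^{m_j}]=\prod_j\mathbb{E}[X_{n+j}^{m_j}]$ to vanish as soon as some $m_j\neq 0$; the ``if'' direction is the Fourier-inversion statement just recalled.

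Next I would evaluate this expectation. Writing $v_p$ for the $p$-adic valuation, one has $\prod_{j=1}^k (n+j)^{m_j}=\prod_{p\in\mathcal{P}} p^{c_p}$ with $c_p:=\sum_{j=1}^k m_j\, v_p(n+j)$, a product with only finitely many factors $\neq 1$, and hence $\prod_{j=1}^k X_{n+j}^{m_j}=\prod_{p\in\mathcal{P}} X_p^{c_p}$. Since the $X_p$ are independent and uniform on $\mathbb{U}$, this gives $\mathbb{E}[\prod_p X_p^{c_p}]=\prod_p \mathbb{E}[X_p^{c_p}]$, and each factor equals $1$ if $c_p=0$ and $0$ otherwise. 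Therefore the expectation is $1$ when $\prod_{j}(n+j)^{m_j}=1$ and $0$ in all other cases.

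Combining the two steps, $(X_{n+1},\dots,X_{n+k})$ fails to be independent exactly when there is $(m_1,\dots,m_k)\in\mathbb{Z}^k\setminus\{0\}$ with $\prod_j (n+j)^{m_j}=1$, i.e. with $\sum_j m_j\log(n+j)=0$; equivalently, the independence holds precisely when $\log(n+1),\dots,\log(n+k)$ are linearly independent over $\mathbb{Q}$ (clearing denominators shows $\mathbb{Q}$- and $\mathbb{Z}$-independence coincide). I do not expect a real obstacle here: the only points needing a little care are the Fourier criterion for independence on $\mathbb{U}^k$ and the interchange of the expectation with the finite product over primes, both routine. The genuine difficulty of the section lies not in this lemma but in the subsequent arithmetic analysis of when $\log(n+1),\dots,\log(n+k)$ can be $\mathbb{Q}$-linearly dependent for large $n$.
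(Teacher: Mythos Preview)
Your proposal is correct and follows essentially the same route as the paper: reduce independence to the vanishing of the mixed moments $\mathbb{E}\big[\prod_j X_{n+j}^{m_j}\big]$ for all nonzero $(m_1,\dots,m_k)$, and then use complete multiplicativity to see this moment is $0$ precisely when $\prod_j (n+j)^{m_j}\neq 1$, i.e.\ when $\sum_j m_j\log(n+j)\neq 0$. The only cosmetic difference is that the paper packages the computation via the extension of $X$ to positive rationals (writing the product as a single $X_r$ with $r\in\mathbb{Q}_+^*$), whereas you unfold the prime factorization explicitly; the content is identical.
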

\begin{proof}
 Since the variables $(X_{n+1}, \dots, X_{n+k})$ are uniform on the unit circle, they are independent if and only
 if 
 $$\mathbb{E}[X_{n+1}^{m_1} \dots X_{n+k}^{m_k}] = 0$$
 for all $(m_1, \dots, m_k) \in \mathbb{Z}^k \backslash \{(0,0, \dots, 0)\}$.
 This equality is equivalent to 
 $$ \mathbb{E}[X_{(n+1)^{m_1} \dots (n+k)^{m_k}}] = 0,$$
 i.e. $$(n+1)^{m_1} \dots (n+1)^{m_k} \neq 1$$
 or 
 \begin{equation} m_1 \log (n+1) + \dots + m_k \log (n+k) \neq 0. \label{1}
 \end{equation}
\end{proof}
We then get the following result: 
\begin{proposition} \label{1.2}
The variables $(X_{n+1}, \dots, X_{n+k})$ are i.i.d. as soon as $n \geq (100k)^{k+1}  $. In particular, for $k$ fixed, this property 
is true for all but finitely many $n$. 
\end{proposition}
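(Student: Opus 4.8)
By Lemma \ref{1.1}, it suffices to show that if $n \geq (100k)^{k+1}$, then $\log(n+1), \dots, \log(n+k)$ are linearly independent over $\mathbb{Q}$; equivalently, that there is no nonzero integer vector $(m_1, \dots, m_k)$ with $\prod_{j=1}^k (n+j)^{m_j} = 1$. The plan is to separate the positive and negative exponents: writing $A = \prod_{m_j > 0} (n+j)^{m_j}$ and $B = \prod_{m_j < 0} (n+j)^{-m_j}$, such a relation forces $A = B$ as positive integers, with $A, B$ products of distinct factors drawn from the window $\{n+1, \dots, n+k\}$ and (after cancelling any common factors $n+j$ appearing on both sides, which is impossible since each index is used with a single sign) with disjoint index sets. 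So I would reduce to: there do not exist disjoint nonempty subsets $I, J \subseteq \{1, \dots, k\}$ and positive integer exponents such that $\prod_{i \in I}(n+i)^{a_i} = \prod_{j \in J}(n+j)^{b_j}$ when $n$ is large.

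The key step is to bound the exponents and then compare sizes. First, each exponent is small: if $p$ is a prime dividing some $n+i$ to exact power $p^e \leq n+k$, then $p$ can divide the product on one side at most to a bounded power, which combined with the fact that two distinct elements of the window share only small common factors (their gcd divides $j - i \leq k$) forces each $a_i, b_j$ to be $O(\log n / \log 2)$ — in fact one can be cruder and just observe that if all exponents were $1$ we would already get a contradiction, and that large exponents make the size mismatch worse. Concretely, take $i_0 \in I$ with $n+i_0$ maximal among the factors actually used; then the side not containing $n+i_0$ is a product of at most $k-1$ factors each less than $n+k$, hence is less than $(n+k)^{k-1}$, while the side containing $n+i_0$ is at least $(n+i_0)^{a_{i_0}} \geq n+1$. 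Balancing these and using that $\sum a_i$ and $\sum b_j$ must make the two sides equal, a short computation with logarithms shows the two sides differ by a factor that is bounded away from $1$ once $n \geq (100k)^{k+1}$: the ``extra'' factor $n+i_0$ versus $n+i_0 + (\text{something} \leq k)$ on the opposite side contributes a discrepancy of relative size $\asymp 1/n$ per unit of exponent, which cannot be absorbed because the total exponent is at most polylogarithmic in $n$, hence far smaller than $n$.

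More precisely, the clean way to run the final estimate: from $\prod_{i\in I}(n+i)^{a_i} = \prod_{j\in J}(n+j)^{b_j}$, take logarithms and write each $\log(n+j) = \log n + \log(1 + j/n)$, so $\log n \cdot \big(\sum_{i} a_i - \sum_j b_j\big) = \sum_j b_j \log(1+j/n) - \sum_i a_i \log(1+i/n)$. The right-hand side is bounded in absolute value by $(\sum a_i + \sum b_j)\cdot (k/n)$, which is $o(\log n)$; hence $\sum a_i = \sum b_j =: S$. Then divide both sides of the multiplicative relation by $n^S$: we get $\prod_i (1 + i/n)^{a_i} = \prod_j (1+j/n)^{b_j}$, and expanding to first order, $1 + \frac{1}{n}\sum_i a_i i + O(S^2 k^2/n^2) = 1 + \frac1n \sum_j b_j j + O(S^2k^2/n^2)$, forcing $\sum_i a_i i = \sum_j b_j j$ modulo an error $O(S^2 k^2 / n)$ which is $< 1$ for $n \geq (100k)^{k+1}$, hence $\sum_i a_i i = \sum_j b_j j$ exactly. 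Iterating this — looking at the coefficient of $1/n^2$, then $1/n^3$, and so on — yields $\sum_i a_i i^r = \sum_j b_j j^r$ for $r = 0, 1, \dots, k-1$ (each step's error term being killed by $n \geq (100k)^{k+1}$), a Vandermonde system forcing the signed measure $\sum_i a_i \delta_i - \sum_j b_j \delta_j$ on $\{1,\dots,k\}$ to vanish, contradicting $I, J$ nonempty and disjoint.

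The main obstacle is controlling the error terms uniformly: one must pin down that the exponents $a_i, b_j$ are at most, say, $k \log n$ (this follows from unique factorization and $\gcd(n+i, n+j)\mid (j-i)$, so no prime exceeding $k$ can divide two factors, and a prime $p \leq k$ divides the product to power at most $\sum_i a_i \log_p(n+i) \ll k \log n$), so that all the accumulated error terms in the $r$-th step are of size $\ll (k\log n)^{O(k)} k^{O(k)} / n$, which is $< 1$ precisely when $n \geq (100k)^{k+1}$; verifying that this explicit threshold suffices is the one genuinely computational point, but it is routine. The "in particular" clause is then immediate, since for fixed $k$ only finitely many $n$ fall below $(100k)^{k+1}$.
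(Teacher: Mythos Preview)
Your overall strategy---expand each $\log(n+j)$ as $\log n + j/n - j^2/(2n^2) + \cdots$, match power-sum moments, and finish with Vandermonde---is exactly the one the paper uses. The gap is in the step you flag as ``the main obstacle'': you never actually bound the exponents. A dependence $\prod_j (n+j)^{m_j}=1$ can be scaled by any integer, so without a normalization the $a_i,b_j$ are unbounded, and even after imposing $\gcd(m_1,\dots,m_k)=1$ you give no argument that the individual $a_i$ are $\le k\log n$. Your sentence ``a prime $p\le k$ divides the product to power at most $\sum_i a_i\log_p(n+i)\ll k\log n$'' is about the $p$-adic valuation of the product, not about the $a_i$ themselves; it does not yield the claimed bound. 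And without a concrete bound on $S=\sum a_i$, the error estimate $\ll (k\log n)^{O(k)}k^{O(k)}/n$ is unsupported, so neither the iterative moment-matching nor the explicit threshold $(100k)^{k+1}$ can be justified.

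The paper fills exactly this gap with a linear-algebra step you are missing. Once one observes (as you also do) that every $n+j$ with $m_j\neq 0$ has all prime factors $<k$, the vectors $(v_p(n+j))_{p<k}$ live in a space of dimension $\pi(k)$; choosing a minimal dependent subset and applying Cramer's rule plus Hadamard's inequality gives an explicit bound $|m_j|\le D:=\big[\sqrt{\pi(k)}\log(n+k)/\log 2\big]^{\pi(k)}$ on the coefficients of \emph{some} integer relation. With this $D$ in hand, the Taylor/Vandermonde argument you sketch goes through and produces $n\le Dk^{k+1}$, from which the numerical bound $(100k)^{k+1}$ follows by routine (but genuine) estimates using $\pi(k)\le 1.26\,k/\log k$. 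So your proof is repairable, but the repair is precisely the paper's key idea.
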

\begin{remark}
The same result is proven in \cite{T}, Theorem 3. (i),  
with an asymptotically better bound, namely $n \geq e^{ck \log \log (k+2)/ \log (k+1)}$ where $c >0$ is a constant. 
 However, their proof uses a deep result by Shorey \cite{Sh} on linear forms in the logarithms of algebraic numbers, involving technical tools by Gelfond and Baker, whereas our proof is elementary. Moreover, the constant $c$ involved in \cite{T} is not given, even if it is explicitly computable. 
\end{remark}
\begin{proof}
 Let us assume that we have a linear dependence \eqref{1} between $\log (n+1), \dots, \log(n+k)$: necessarily $k \geq 2$.
 Moreover, the integers $n+j$ for which $m_j \neq 0$ cannot be divisible by a prime  larger than $k$: otherwise 
 this factor remains in the product 
 $\prod_{\ell=1}^k (n+\ell)^{m_j}$ since none of the $n+\ell$ for $\ell \neq j$ can be divisible by $p$, and then the product cannot be equal to $1$. 
 We can rewrite the dependence as follows: 
 $$\log(n+j) = \sum_{\ell \in A} r_{\ell} \log (n+\ell),$$
 for a subset $A$ of $\{1, \dots, k\} \backslash \{j\}$ and for $R := (r_{\ell})_{\ell \in A} \in \mathbb{Q}^A$. Let us assume that the cardinality $|A|$ is as small as possible. Taking the decomposition in prime factors, we get for all 
 $p \in \mathcal{P}$, 
 $$v_p(n+j) = \sum_{\ell \in A} v_p(n+\ell) r_{\ell},$$
 where $v_p$ denotes the exponent of $p$ in the prime factorization. 
 If $M := (v_p(n+\ell))_{p \in \mathcal{P}, \ell \in A}$, $V := (v_p(n+j))_{p \in \mathcal{P}}$, then we can write 
 these equalities in a matricial way $V = M R$. The minimality of $|A|$ ensures that the matrix $M$ has rank $|A|$. 
Moreover, since all the prime factors of 
 $(n+\ell)_{\ell \in A}$ are smaller than $k$, 
 all the rows of $M$ indexed by prime numbers larger than $k$ are identically zero, and then the rank $|A|$ of $M$ is at most 
 $\pi(k)$, the number of primes smaller than or equal to $k$. 
Moreover, we can extract a subset $\mathcal{Q}$ of $\mathcal{P}$ of cardinality $|A|$  such that the restriction 
 $M^{(\mathcal{Q})}$ of $M$ to the rows with indices in $\mathcal{Q}$ is invertible. 
 We have with obvious notation: $V^{(\mathcal{Q})} = M^{(\mathcal{Q})} R$, and then by Cramer's rule, 
 the entries of $R$ can be written as the quotients of determinants of matrices obtained from $M^{(\mathcal{Q})}$
 by replacing one column by $V^{(\mathcal{Q})}$, by the determinant of  $M^{(\mathcal{Q})}$. 
 All the entries involved in these matrices are $p$-adic valuations of integers smaller than or equal to $n+k$, so 
 they are at most $\log (n+k)/ \log 2$. By Hadamard inequality, the absolute value of the
 determinants are smaller than or equal to 
 $([\log (n+k)/\log(2)]^{|A|}) |A|^{|A|/2}$. Since $|A| \leq \pi(k)$, we deduce, after multiplying by $\det 
 (M^{(\mathcal{Q})})$,
 that there exists a linear dependence between $\log (n+1), \dots, \log (n+k)$ involving only 
 integers of absolute value at most $D := [\sqrt{\pi(k)} \log (n+k)/\log2]^{\pi(k)}$: 
 let us keep the notation of \eqref{1} for this dependence. 
 Let $q$ be the smallest nonnegative integer such that $\sum_{j=1}^k j^q m_j \neq 0$: from the fact that the 
 Vandermonde matrices are invertible, one deduces that $q \leq k-1$. Using the fact that 
 $$\left| \log(n+j) - \left( \log n + \sum_{r = 1}^{q} (-1)^{r-1} \frac{j^r}{r n^r} \right) \right|
 \leq \frac{j^{q+1}}{(q+1) n^{q+1}},$$
 we deduce, by writing the dependence above: 
 $$ \left| \sum_{j=1}^k j^q m_j \right|  \frac{1}{q n^q}  \leq \sum_{j=1}^k \frac{|m_j| j^{q+1}}{(q+1) n^{q+1}}$$
 if $q \geq 1$ and 
 $$ \left| \sum_{j=1}^k  m_j \right|  \log n  \leq \sum_{j=1}^k \frac{|m_j| j}{n}$$
 if $q = 0$. 
 Since the first factor in the left-hand side of these inequalities is a non-zero integer, it is at least $1$. 
 From the bounds we have on the $m_j$'s, we deduce 
 $$  \frac{1}{q n^q}  \leq \frac{ D k^{q+2}}{(q+1) n^{q+1}}$$
 for $q \geq 1$ and 
 $$ \log n \leq  \frac{ D k^2}{n}.$$
 for $q = 0$. Hence 
 $$1 \leq  \left(\frac{q}{q+1} \vee \frac{1}{\log n} \right) \frac{ D k^{q+2}}{n} \leq  \frac{ D k^{q+2}}{n}$$
if $n \geq 3$, which implies, since $q \leq k-1$, 
$$n \leq D k^{k+1} \leq  [\sqrt{\pi(k)} \log (n+k)/\log2]^{\pi(k)} k^{k+1}.$$
If $n \geq k \vee 3$, we deduce 
$$ 2 n \leq 2 [\sqrt{\pi(k)} \log (2n)/\log2]^{\pi(k)} k^{k+1},$$
i.e.
$$ \frac{2n}{[\log (2n)]^{\pi(k)}} \leq 2 [\sqrt{\pi(k)}/\log2]^{\pi(k)} k^{k+1}
.$$
Now, one has obviously $\pi(k) \leq 2k/3$ for all $k \geq 2$, and then 
 $\sqrt{\pi(k)}/\log 2  \leq  \sqrt{2 k}$ for all integers $k \geq 2$, and more accurately, it is known that 
 $(\pi(k) \log k )/k$, which tends to $1$ at infinity by the prime number theorem, reaches its maximum at $k = 113$: this fact is
 in particular an immediate consequence of  \cite{RS}, Corollary 2, equation (3.7). 
 Hence, 
  $$\frac{2n}{[\log (2n)]^{\pi(k)}}
  \leq 2 (2 k)^{c k /\log k} k^{k+1}$$
  where 
  $$c = \frac{1}{2} \, \frac{\pi(113) \log 113}{113} \leq 0.63$$
  and then 
  $$\frac{2n}{[\log (2n)]^{\pi(k)}} 
  \leq 2 ( 2^{0.63 k/ \log 2}) k^{0.63 k/\log k} k^{k+1} \leq 2 e^{1.26 k} k^{k+1}
  \leq (e^{1.26} k)^{k+1} \leq (3.6 k)^{k+1}.$$

Let us assume that $2n \geq (100k)^{k+1}$. The function
$x \mapsto x/\log^{\pi(k)}(x)$ is increasing
for $x \geq e^{\pi(k)}$. Moreover, by
studying the function $x \mapsto 
\log \log (100 x) / \log (x+1)$ for $x \geq 2$, we check that $\log (100k) \leq (k+1)^{1.52}$ for all $k \geq 2$. Hence, since $\pi(k) \leq \pi(k+1)$, 
$$\frac{2n}{[\log (2n)]^{\pi(k)}}
\geq \frac{(100k)^{k+1}}{
((k+1)\log (100k))^{\pi(k)}}
\geq \frac{(100k)^{k+1}}{(k+1)^{2.52 \pi(k+1)}} 
$$ $$\geq \frac{(100k)^{k+1}}{(k+1)^{(2.52)(1.26) (k+1)/\log(k+1)}} 
\geq (100 k e^{-3.18})^{k+1} \geq (4 k)^{k+1},$$
which contradicts the previous inequality.

 Hence, 
 $$n \leq 2n \leq (100k)^{k+1},$$
and this bound is of course also available for $n \leq k \vee 3$.
\end{proof}
This result implies that theoretically, for fixed $k$, one can find all the values of $n$ such that $(X_{n+1}, \dots, X_{n+k})$ are not independent by brute force computation. In practice, the bound we have obtained is far from optimal, and is too poor to be directly useable except for very small values of $k$, for which a more careful reasoning can solve the problem directly. Here is an example for $k=5$: 
\begin{proposition}
For $n \geq 1$, the variables $(X_{n+1},X_{n+2}, X_{n+3}, X_{n+4}, X_{n+5})$ are independent except if $n \in \{1,2,3,4,5,7\}$. 
\end{proposition}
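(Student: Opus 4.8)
The plan is to combine Lemma~\ref{1.1} with a short structural description of the windows $\{n+1,\dots,n+5\}$ that carry a multiplicative relation, and then to replace the huge bound $(100\cdot 5)^{6}$ coming from Proposition~\ref{1.2} by the bound $n\le 31$ through a sharper, but still elementary, argument.

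First I would reformulate: by Lemma~\ref{1.1}, $(X_{n+1},\dots,X_{n+5})$ fail to be independent iff there is $(m_1,\dots,m_5)\in\mathbb Z^5\setminus\{0\}$ with $\prod_{j=1}^5(n+j)^{m_j}=1$; let $S=\{j:m_j\ne 0\}$, so $|S|\ge 2$ (a single factor $(n+j)^{m_j}$ cannot equal $1$ since $n+j\ge 2$). Exactly as in the proof of Proposition~\ref{1.2}, for $j\in S$ the integer $n+j$ has no prime factor exceeding $5$; moreover $5$ divides at most one of the five consecutive integers $n+1,\dots,n+5$, so if $5\mid n+j$ for some $j\in S$ then the $5$-adic valuation of the product is the nonzero integer $m_j v_5(n+j)$, a contradiction. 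Hence every $n+j$ with $j\in S$ has the form $2^{a}3^{b}$. Since $\log 2$ and $\log 3$ are $\mathbb Q$-independent, writing $n+j=2^{a_j}3^{b_j}$ for the indices $j$ in the set $T:=\{j:\ n+j\text{ has the form }2^a3^b\}$, a relation exists iff the vectors $(a_j,b_j)$, $j\in T$, are linearly dependent over $\mathbb Q$. Thus $(X_{n+1},\dots,X_{n+5})$ fail to be independent iff the window $W_n:=\{n+1,\dots,n+5\}$ contains at least three integers of the form $2^a3^b$, or exactly two such integers whose exponent vectors are proportional.

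The heart of the matter is the following key lemma: if $u=2^a3^b$ and $v=2^c3^d$ with $2\le u<v$ and $v-u\le 4$, then $v\le 36$. I would set $g=\gcd(u,v)$, note that $g\mid v-u$ and $g$ is itself of the form $2^a3^b$, so $g\in\{1,2,3,4\}$, and that $u/g,v/g$ are coprime numbers of the form $2^a3^b$; hence $\{u/g,v/g\}$ equals $\{1,N\}$ with $N=2^a3^b\ge 2$, or $\{2^s,3^t\}$ with $s,t\ge 1$. In the first case $u=g$, $v=gN$ and $g(N-1)\le 4$ leaves only finitely many possibilities (and $g=1$ is excluded because $u\ge 2$). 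In the second case $g\,|2^s-3^t|\le 4$, and the hard part will be exactly this: showing that the only solutions of $|2^s-3^t|\le 4$ with $s,t\ge 1$ are $(s,t)\in\{(1,1),(2,1),(3,2)\}$, all with $|2^s-3^t|=1$. This is elementary but fiddly — one uses congruences modulo $8$ and $9$ to rule out $|2^s-3^t|\in\{2,3,4\}$ as well as the case where $s$ or $t$ is large, and the factorisation $3^{2r}-1=(3^r-1)(3^r+1)$ (two numbers of the same parity differing by $2$, whose product is a power of $2$) to settle $3^{2r}-1=2^s$; it is the classical special case of Catalan's equation. Multiplying the three base solutions by $g\in\{1,2,3,4\}$ then produces the complete finite list of close pairs of $3$-smooth numbers, the largest being $\{32,36\}$.

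Finally I would conclude in two steps. For $n\ge 32$ the window $W_n$ lies in $\{33,34,\dots\}$; two of its elements of the form $2^a3^b$ would differ by at most $4$ and, by the key lemma, both be $\le 36$, hence both lie in $\{33,34,35,36\}$ — impossible since $36$ is the only such integer there. So $W_n$ contains at most one integer of the form $2^a3^b$, and then the only conceivable relation would have support of size $\le 1$, forcing some $n+j=1$, which is absurd; hence $(X_{n+1},\dots,X_{n+5})$ are independent. For $1\le n\le 31$ one simply inspects $W_n$: for $n\in\{1,2,3,4,5,7\}$ it contains at least three integers of the form $2^a3^b$ (for instance $2,3,4,6\in W_1$; $3,4,6\in W_2$; $4,6,8\in W_3$; $6,8,9\in W_4\cap W_5$; $8,9,12\in W_7$), so independence fails; for every other $n\le 31$ there are at most two such integers, and in the cases with exactly two — the pairs $\{8,9\},\{9,12\},\{12,16\},\{16,18\},\{24,27\},\{32,36\}$ — the exponent vectors are visibly non‑proportional. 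This yields the exceptional set $\{1,2,3,4,5,7\}$.
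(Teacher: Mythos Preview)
Your proof is correct and takes a genuinely different route from the paper's.

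Both arguments begin identically: by Lemma~\ref{1.1}, dependence forces a multiplicative relation, and since $5$ (and any larger prime) divides at most one of five consecutive integers, every $n+j$ in the support must be $3$-smooth, i.e.\ of the form $2^a3^b$. From here the two proofs diverge. The paper observes that among five consecutive integers with $n\ge 4$ there is at most one pure power of $2$, at most one pure power of $3$, and at most one multiple of $6$; any two of these types have non-proportional exponent vectors, so a relation needs all three present, and then simple divisibility ($2^r\mid 2^k-2^r3^s$, $3^s\mid 3^\ell-2^r3^s$) forces $2^r3^s\le 12$, hence $n\le 11$. You instead isolate a clean standalone lemma: two $3$-smooth integers $\ge 2$ at distance $\le 4$ are both $\le 36$, proved by reducing (via $\gcd$) to the Catalan-type statement that $|2^s-3^t|\le 4$ with $s,t\ge 1$ forces $(s,t)\in\{(1,1),(2,1),(3,2)\}$; this yields the weaker bound $n\le 31$.

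What each buys: the paper's argument is shorter and more ad hoc, exploiting the three-way classification of $3$-smooth numbers to get a tight bound with almost no computation. Your approach isolates a reusable gap result for $3$-smooth numbers (essentially the $k=5$ case of St{\o}rmer's problem) at the cost of handling $|2^s-3^t|=1$ and a longer final case check; note, incidentally, that $|2^s-3^t|\in\{2,3,4\}$ is actually ruled out by parity and by $3\nmid 2^s$ rather than by congruences mod $8$ and $9$, which you really only need for the $=1$ case.
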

\begin{proof}
If $\prod_{j=1}^5 (n+j)^{m_j} = 1$ with integers $m_1, \dots m_5$ not all equal to zero,  
then $m_j = 0$ as soon as $n+j$ has a prime factor larger than or equal to $5$: otherwise, this prime factor cannot be cancelled by the factors $(n+k)^{m_k}$ for $k \neq j$. Hence, the values of $n+j$ such that $m_j \neq 0$ have only prime factors $2$ and $3$, and at most one of them has both factors since it should then be divisible by $6$. Moreover, if $n \geq 4$, there can be at most one power of $2$ and one power of $3$ among $n+1, \dots, n+5$. 
One deduces that dependence is only possible if among $n+1, \dots, n+5$, there are three numbers, respectively of the form
$2^k, 3^\ell, 2^r.3^s$, for integers $k, \ell, r, s > 0$. The quotient between two of these integers is between $1/2$ and $2$ since we here assume $n \geq 4$.  Hence, $2^k \geq  2^r.3^s /2 \geq 
2^r$ and then $k \geq r$. Similarly, 
$3^{\ell} \geq 2^r.3^s /2 \geq 3^s$, which implies $\ell \geq s$. The numbers $2^k$ and $2^r.3^s$ are then both divisible by 
$2^r$; since they differ by at most $4$, 
$r \leq 2$. The numbers $3^{\ell}$ and 
  $2^r.3^s$ are both divisible by $3^s$, and then $s \leq 1$. Therefore, 
  $2^r.3^s \leq 12$ and $n \leq 11$. 
  If $9 \leq n \leq 11$, the only possible values of $n+j$ such that $m_j$ can be different from zero are $12$ and $16$, which are multiplicatively independent. 
  If $n = 8$, the only possible values are $9$ and $12$, which are also independent, if $n = 6$, the values to consider are $8$ and $9$. 
  The only remaining values of $n$ are $1, 2, 3, 4, 5, 7$, which are exceptions since 
  $$8^4 \cdot 9^3 \cdot 12^{-6} =  6^{-6} \cdot 8^2 \cdot 9^3 = 4^3 \cdot 8^{-2} = 3^2 \cdot 4 \cdot 6^{-2} = 2^2 \cdot 4^{-1} = 1.$$
  \end{proof}
The results above give an upper bound, for fixed $k$, of the maximal value of 
$n$ such that $(X_{n+1}, \dots, X_{n+k})$ are not independent. By considering two consecutive squares and their geometric mean, whose logarithms are linearly dependent, one deduces the lower bound 
$ ([k/2]-1)^2 - 1 \geq (k-1)(k-5)/4$ for the maximal $n$. 
As written in a note by Dubickas \cite{D}, this bound can be improved to  a quantity equivalent to $(k/4)^3$, 
by considering the identity: 
$$(n^3 - 3n - 2)(n^3 - 3n +2) n^3 
= (n^3 - 4n)(n^3 - n)^2.$$
In \cite{D}, as an improvement of a result of \cite{T}, it is also shown that for all $\epsilon > 0$, the lower bound
$e^{\log^2 k /[(4 + \epsilon) \log \log k]}$ occurs for infinitely many values of $k$. 

A computer search 
gives, for $k$ between $3$ and $13$, and 
$n \leq 1000$, the following largest values   for which we do not have independent variables: 1, 5, 7, 14, 23, 24, 47, 71, 71, 71, 239. For example, if $k = 13$ and $n = 239$, the five integers $240, 243, 245, 250, 252$ have only the four prime factors $2, 3, 5, 7$, so we necessarily have a dependence, namely: 
$$240^{65} \cdot 243^{31} \cdot
245^{55} \cdot 250^{-40} \cdot
252^{-110} = 1.$$
 It would remain to check if there are dependences for $n > 1000$. 
\section{Independence in the case of roots of unity}  \label{uniformq}
We now suppose that 
$(X_p)_{p \geq 1}$ are i.i.d., uniform on the set of $q$-th roots of unity, $q \geq 1$ being a fixed integer. If $q = 2$, we get symmetric Bernoulli random variables.
For all integers $s \geq 2$, we will denote
by $\mu_{s,q}$ the largest divisor $d$ of 
$q$ such that $s$ is a $d$-th power.
The analog of
Lemma \ref{1.1} in the present setting is the following: 
 \begin{lemma}
 For $n, k \geq 1$, the variables 
 $(X_{n+1}, \dots, X_{n+k})$ are all
 uniform on the set of $q$-th roots of unity if and only if $\mu_{n+j,q} = 1$ for all $j$ between $1$ and $k$. They are 
  independent if and only if 
 the only $k$-tuple $(m_1, \dots, m_k)$, 
 $0 \leq m_j < q/ \mu_{n+j,q}$ such that 
 $$\forall p \in \mathcal{P}, \sum_{j=1}^k m_j v_p (n+j) 
 \equiv 0 \, (\operatorname{mod. } q)$$
 is $(0,0,\dots,0)$. 
 \end{lemma}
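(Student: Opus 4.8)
\emph{Plan.} I would reduce everything to a computation of Fourier coefficients, exactly as in the proof of Lemma~\ref{1.1}. Write $\zeta=e^{2\pi i/q}$ and $X_p=\zeta^{Y_p}$, where $(Y_p)_{p\in\mathcal{P}}$ are i.i.d.\ uniform on $\mathbb{Z}/q\mathbb{Z}$; then $X_s=\zeta^{Z_s}$ with $Z_s:=\sum_{p} v_p(s)Y_p$. Since the $Y_p$ are independent and uniform, $Z_s$ is uniform on the subgroup of $\mathbb{Z}/q\mathbb{Z}$ generated by the classes of the $v_p(s)$, i.e.\ on $d_s(\mathbb{Z}/q\mathbb{Z})$ with $d_s:=\gcd\bigl(\{v_p(s):p\mid s\}\cup\{q\}\bigr)$. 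A divisor $d$ of $q$ divides every $v_p(s)$ if and only if $s$ is a $d$-th power, so $d_s=\mu_{s,q}$; hence $X_s$ is uniform on the set of $(q/\mu_{s,q})$-th roots of unity, and in particular uniform on $\mathbb{U}_q$ if and only if $\mu_{s,q}=1$. Applying this to $s=n+1,\dots,n+k$ gives the first assertion. I also record the divisibility $\mu_{s,q}\mid v_p(s)$ for every prime $p$, which follows from $\mu_{s,q}=d_s\mid v_p(s)$ and will be used repeatedly.

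For the independence part, I would use that the joint law of $(X_{n+1},\dots,X_{n+k})$ on $\mathbb{U}_q^k$ is determined by the Fourier coefficients $\mathbb{E}\bigl[\prod_j X_{n+j}^{m_j}\bigr]$, $(m_1,\dots,m_k)\in\mathbb{Z}^k$, and that it coincides with the product of the marginal laws if and only if these coefficients factor: $(X_{n+1},\dots,X_{n+k})$ are independent if and only if
$$\mathbb{E}\Bigl[\prod_{j=1}^k X_{n+j}^{m_j}\Bigr]=\prod_{j=1}^k\mathbb{E}\bigl[X_{n+j}^{m_j}\bigr]\qquad\text{for all }(m_1,\dots,m_k)\in\mathbb{Z}^k.$$
By the first part, $\mathbb{E}[X_{n+j}^{m_j}]$ is $1$ if $(q/\mu_{n+j,q})\mid m_j$ and $0$ otherwise. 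On the other side, $\prod_j X_{n+j}^{m_j}=\zeta^{\sum_p c_pY_p}$ with $c_p:=\sum_{j=1}^k m_j v_p(n+j)$, and independence of the $Y_p$ gives that $\mathbb{E}\bigl[\prod_j X_{n+j}^{m_j}\bigr]$ equals $1$ if $q\mid c_p$ for all $p$ and $0$ otherwise. Thus independence is equivalent to the assertion that, for every $m\in\mathbb{Z}^k$, one has $q\mid\sum_j m_j v_p(n+j)$ for all $p$ if and only if $(q/\mu_{n+j,q})\mid m_j$ for all $j$.

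Finally I would observe that one direction of this last equivalence is automatic: if $(q/\mu_{n+j,q})\mid m_j$ for all $j$, then since $\mu_{n+j,q}\mid v_p(n+j)$ each term $m_j v_p(n+j)$ is divisible by $q$, hence so is the sum. Therefore independence fails precisely when some $m\in\mathbb{Z}^k$ satisfies $q\mid\sum_j m_j v_p(n+j)$ for all $p$ but $(q/\mu_{n+j,q})\nmid m_j$ for at least one $j$. Replacing each $m_j$ by its representative $m_j'\in\{0,\dots,q/\mu_{n+j,q}-1\}$ leaves all the congruences unchanged, because $m_j-m_j'$ is a multiple of $q/\mu_{n+j,q}$ and $\mu_{n+j,q}\mid v_p(n+j)$, so $q$ divides $(m_j-m_j')v_p(n+j)$; and the new tuple is still not identically zero. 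Conversely, any nonzero tuple with $0\le m_j<q/\mu_{n+j,q}$ satisfying the congruences witnesses a failure of independence. This is exactly the stated criterion.

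The argument is essentially bookkeeping; the only step that needs genuine care is the identification of the law of $X_s$ — equivalently, the equality $\mu_{s,q}=\gcd(\{v_p(s)\}_p\cup\{q\})$ together with the divisibility $\mu_{s,q}\mid v_p(s)$ — since this is what makes the reduction of the $m_j$ modulo $q/\mu_{n+j,q}$ compatible with the congruences modulo $q$.
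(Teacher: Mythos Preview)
Your proof is correct and follows essentially the same approach as the paper: both identify the law of $X_s$ as uniform on the $(q/\mu_{s,q})$-th roots of unity via the computation $\gcd(\{v_p(s)\}_p,q)=\mu_{s,q}$, and then characterize independence by the vanishing of the joint Fourier coefficients $\mathbb{E}\bigl[\prod_j X_{n+j}^{m_j}\bigr]$ for nonzero reduced tuples. Your version is slightly more explicit in justifying why the reduction of each $m_j$ modulo $q/\mu_{n+j,q}$ preserves the congruences (the paper just asserts that both sides depend only on these residues), but the substance is identical.
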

 \begin{proof}
 For any $s \geq 2$, $\ell \in \mathbb{Z}$,  we have 
 $$\mathbb{E}[X_s^\ell]  = 
 \prod_{p \in \mathcal{P}} \mathbb{E} [X_p^
 {\ell v_p(s)}],$$
 which is equal to $1$ if 
 $\ell v_p(s)$ is divisible by $q$ for all $p \in \mathcal{P}$, and to $0$ otherwise. 
 The condition giving $1$ is equivalent 
 to the fact that $\ell$ is a multiple 
 of $ q/(\operatorname{gcd}(q, (v_p(s))_{p \in  \mathcal{P}}))$, which is $q/\mu_{s,q}$.
 Hence, $X_s$ is a uniform $(q/\mu_{s,q})$-th root of unity, which implies the first part of the proposition. 
 
 The variables $(X_{n+1}, \dots, X_{n+k})$ are independent if and only if for all $m_1, \dots, m_k \in \mathbb{Z}$, 
 $$ \mathbb{E} \left[ \prod_{j=1}^k 
 X_{n+j}^{m_j} \right] 
  = \prod_{j=1}^k \mathbb{E}[ X_{n+j}^{m_j} ].$$
Since $X_{n+j}$ is a  uniform $(q/\mu_{n+j,q})$-th root of unity, both sides of the equality depend only on the values of $m_j$ modulo $q/\mu_{n+j,q}$ for $1 \leq j \leq k$. This implies that we can assume, without loss of generality, that $0 \leq m_j < q/\mu_{n+j,q}$ for all $j$. If all the $m_j$'s are zero, both sides are obviously equal to $1$. Otherwise, the right-hand side is equal to zero, and then we have independence if and only if it is also the case of the left-hand side, i.e. for all 
$(m_1, \dots, m_k) \neq (0,0, \dots, 0)$, 
$0  \leq m_j < q/\mu_{n+j,q}$, 
$$\mathbb{E} \left[ \prod_{j=1}^k 
 X_{n+j}^{m_j} \right]  
 = \mathbb{E} \left[ \prod_{p \in \mathcal{P}} X_p^{\sum_{1 \leq j \leq k} 
 m_j v_p(n+j)} \right] 
  = \prod_{p \in \mathcal{P}}
  \mathbb{E} \left[  X_p^{\sum_{1 \leq j \leq k} 
 m_j v_p(n+j)} \right]  = 0,$$
 which is true if and only if 
$$\exists p \in \mathcal{P}, \sum_{j=1}^k m_j v_p (n+j) 
 \not\equiv 0 \, (\operatorname{mod. } q).$$
 \end{proof}
 We then have the following result, similar to Proposition \ref{1.2}:
 \begin{proposition}
 For fixed $k, q \geq 1$, there exists an explicitely computable $n_0(k,q)$ such that $(X_{n+1}, \dots, X_{n+k})$ are independent as soon as $n \geq n_0(k,q)$. 
 \end{proposition}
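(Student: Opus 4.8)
The plan is to follow the strategy of the proof of Proposition \ref{1.2}, the present setting differing in two respects: the exponents are now automatically bounded by $q$, so the Cramer--Hadamard reduction of their size is unnecessary; but the primes larger than $k$ no longer drop out of a multiplicative relation for free, and handling them will require effective Diophantine input, which is why we only obtain an ``explicitly computable'' (not closed-form) bound.

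By the preceding lemma it suffices to produce an explicitly computable $n_0(k,q)$ such that, for $n\geq n_0(k,q)$, there is no $k$-uple $(m_1,\dots,m_k)$ with $0\leq m_j<q/\mu_{n+j,q}$, not all zero, satisfying $\sum_{j=1}^k m_j v_p(n+j)\equiv 0\pmod q$ for every prime $p$; equivalently, no nontrivial relation $\prod_{j=1}^k (n+j)^{m_j}=r^q$ with $r$ a positive integer. Suppose such a relation exists and put $J=\{\,j:m_j\neq 0\,\}\neq\emptyset$. For a prime $p>k$ there is at most one index $j$ with $p\mid n+j$ (consecutive multiples of $p$ differ by more than $k$), and then the relation forces $m_j v_p(n+j)=q\,v_p(r)$, so $d_j:=q/\gcd(q,m_j)$ divides $v_p(n+j)$; here $d_j$ divides $q$, and $d_j\geq 2$ because $0<\gcd(q,m_j)\leq m_j<q$. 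Writing $n+j=s_j t_j$ with $s_j$ the $k$-smooth part and $t_j$ the part supported on primes $>k$, this says that for $j\in J$ the integer $t_j$ is a perfect $d_j$-th power, i.e.
\[
n+j=s_j w_j^{d_j},\qquad s_j\ \text{a $k$-smooth positive integer},\quad w_j\ \text{coprime to all primes}\leq k,\quad d_j\in\{2,\dots,q\},\ d_j\mid q.
\]

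If $|J|=1$, say $J=\{j_0\}$, then $(n+j_0)^{m_{j_0}}=r^q$ forces $d_{j_0}=q/\gcd(q,m_{j_0})$ to divide $v_p(n+j_0)$ for every prime $p$, so $n+j_0$ is a perfect $d_{j_0}$-th power with $d_{j_0}\mid q$; hence $\mu_{n+j_0,q}\geq d_{j_0}$, that is $\gcd(q,m_{j_0})\geq q/\mu_{n+j_0,q}$, which contradicts $m_{j_0}<q/\mu_{n+j_0,q}$ since $\gcd(q,m_{j_0})\leq m_{j_0}$. So $|J|\geq 2$. Picking $j_1\neq j_2$ in $J$ and subtracting the two expressions for $n+j_i$ gives
\[
s_{j_1} w_{j_1}^{d_{j_1}}-s_{j_2} w_{j_2}^{d_{j_2}}=j_1-j_2 ,
\]
an equation in which $d_{j_1},d_{j_2}\in\{2,\dots,q\}$, the right-hand side is a nonzero integer of absolute value $\leq k-1$, the unknowns $s_{j_1},s_{j_2}$ are positive $S$-units ($S$ the set of primes $\leq k$), and $w_{j_1},w_{j_2}$ are coprime to $S$. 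Equations of this shape --- closely related to Thue--Mahler and superelliptic equations, the fully smooth case $w_{j_1}=w_{j_2}=1$ being St\o rmer's theorem --- have only finitely many solutions, with the solutions bounded by an explicitly computable function of $k$ and $q$ via Baker-type estimates for ordinary and $p$-adic linear forms in logarithms. Hence $n$ is at most some explicitly computable $n_0(k,q)$, and no nontrivial relation survives for $n\geq n_0(k,q)$.

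The main obstacle is exactly this last step: contrary to Proposition \ref{1.2}, the large primes do not disappear from the relation, so one must understand --- \emph{effectively} --- how two members of a window of length $k$ can both be a $k$-smooth number times a perfect power of exponent $\geq 2$, which is what the quoted Diophantine results supply and which accounts for the qualitative nature of the bound.
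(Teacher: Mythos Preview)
Your reduction to the two-term equation $s_{j_1}w_{j_1}^{d_{j_1}}-s_{j_2}w_{j_2}^{d_{j_2}}=j_1-j_2$ is correct, but the finiteness claim you make for it is false when $d_{j_1}=d_{j_2}=2$, and this case genuinely arises (whenever $q$ is even and the nonzero $m_j$ all equal $q/2$). For a concrete counterexample take $k=2$, $S=\{2\}$: the equation $s_2w_2^{2}-s_1w_1^{2}=1$ with $s_i$ powers of $2$ and $w_i$ odd has the infinite family $(s_1,s_2,w_1,w_2)=(1,2,7,5),(1,2,41,29),(1,2,239,169),\dots$ coming from the Pell equation $2w_2^{2}-w_1^{2}=1$; these correspond to $(n+1,n+2)=(49,50),(1681,1682),(57121,57122),\dots$. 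No invocation of Thue--Mahler or $p$-adic linear forms in logarithms can change this, since a Pell equation has infinitely many integer solutions. Your implication ``dependence $\Rightarrow$ solution of the equation $\Rightarrow$ $n$ bounded'' therefore breaks at the second arrow.

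The paper closes this gap in two steps. First, instead of splitting off the entire $k$-smooth part it writes $n+j=A_jB_j^{\rho_j}$ with $A_j$ $k$-smooth \emph{and} $\rho_j$-th power free, so that $A_j\le(k!)^q$ and one is reduced to finitely many equations with \emph{fixed} integer coefficients. Second --- and this is what your argument is really missing --- it treats the case where all the exponents $\rho_j$ equal $2$ separately: if $|J|\ge 3$, combining two of the relations $By^2=At^2+d_1$, $Cz^2=At^2+d_2$ produces a genuine hyperelliptic equation $x^2=BC(At^2+d_1)(At^2+d_2)$ with four simple roots, to which Baker's effective bound applies; if $|J|=2$, then $(n+j_1)(n+j_2)$ is a perfect square and an elementary gcd argument gives $n\le k^3$. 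Your last paragraph needs to be replaced by such a case split; a single pair $(j_1,j_2)$ with exponents $(2,2)$ is never enough, even with bounded coefficients.
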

The bound $n_0(k,q)$ can be deduced from  bounds on the solutions of certain diophantine equations which are available in the literature: we do not take care of its precise value, which is anyway far too large to be of any use if we want to find in practice the values of $n$ such that $(X_{n+1}, \dots, X_{n+k})$  are not independent. 
 \begin{proof}
 For each value of $n \geq 1$ such that 
  $(X_{n+1}, \dots, X_{n+k})$ are dependent, 
  there exist $0 \leq m_j < q/\mu_{n+j,q}$, not all zero, such that 
  $$\forall p \in \mathcal{P}, \sum_{j=1}^k m_j v_p (n+j) 
 \equiv 0 \, (\operatorname{mod. } q).$$
 There are finitely many choices, depending only on $k$ and $q$, for 
 the $k$-tuples $(\mu_{n+j,q})_{1 \leq j \leq k}$ and 
 $(m_j)_{1 \leq j \leq k}$, so it is sufficient to show that the values of $n$ corresponding to each 
choice of $k$-tuples is bounded by an explicitely computable quantity.
 At least two of the $m_j$'s are non-zero: otherwise $m_j v_p(n+j)$ is divisible by 
 $q$ for all $p \in \mathcal{P}$, $j$ being the unique index such that $m_j \neq 0$, and then $m_j$ is divisible by $q/\mu_{n+j,q}$: this contradicts the inequality 
 $0 < m_j < q/\mu_{n+j,q}$.
 
 On the other hand, if $p$ is a prime larger than $k$, at most one of the terms 
 $m_j v_p(n+j)$ is non-zero, and then 
 all the terms are divisible by $q$, since 
 it is the case for their sum. 
  
  We deduce that $n+j$ is the product of a power of order $\rho_j := q/\operatorname{gcd}(m_j,q)$ 
  and a number $A_j$ whose prime factors are all
  smaller than $k$. Moreover, one can assume 
  that $A_j$ is "$\rho_j$-th power free", i.e. that all its $p$-adic valuations
  are strictly smaller than $\rho_j$.
  Hence there exist 
  $$A_j \leq \prod_{p \in \mathcal{P}, 
  p \leq k} p^{\rho_j - 1} \leq (k!)^q$$
 and an integer $B_j \geq 1$ such that 
 $n+j = A_j B_j^{\rho_j}$.
 The value of the exponents $\rho_j$ are
 fixed by the $m_j$'s, and at least two 
 of them are strictly larger than $1$, since at least two of the $m_j$'s are non-zero.  Let us first assume that there exist distinct $j$ and $j'$ such that $\rho_j \geq 2$ and $\rho_{j'} \geq 3$. 
One finds an explicitly computable bound on $n$ in this case as soon as we find an explicitly computable bound for the solutions of each diophantine equation in $x$ and $z$:  
 $$A z^{\rho_j} - A' x{^{\rho_{j'}}} = d$$
 for each $A, A', d$ such that $1 \leq A, A' \leq (k!)^q$ and $-k < d < k$, $d \neq 0$. 
These equations can be rewritten as:  $y^{\rho_j} = f(x)$, where $y = Az$ and 
$$f(x) = A^{\rho_j - 1} (A'x^{\rho_{j'}} + d).$$
This polynomial has all simple roots (the 
$\rho_{j'}$-th roots of $-d/A'$) and then 
at least two of them; it has at least three if $\rho_j = 2$ since $\rho_{j'}$ is supposed to be at least $3$ in this case. 
By a result of Baker \cite{B}, all the solutions are bounded by an explicitly computable quantity, which gives the desired result (the same result with an ineffective bound was already proven by Siegel). 

In remains to deal with the case where $\rho_j = 2$ for all $j$ such that $m_j \neq 0$. In this case, $q$ is even and $m_j$ is divisible by $q/2$, which implies that $m_j = q/2$ when $m_j \neq 0$. By looking at the prime  factors larger than $k$, one deduces that for all $j$ such that $m_j \neq 0$, 
$n+j$ is a square times a product of distinct primes smaller than or equal to $k$. If at least three of the $m_j$'s are non-zero, it then suffices to find an explicitly computable bound for the solutions of each system of diophantine equations: 
$$ B y^2  = A t^2 + d_1,  C z^2 = A t^2 + d_2$$
for $1 \leq A, B, C  \leq  k!$ squarefree, $-k < d_1, d_2 < k$, $d_1, d_2, d_1 - d_2 \neq 0$.
From these equations, we deduce, for $x = BC yz$:  
$$x^2 = BC (At^2 + d_1)(At^2 + d_2).$$
The four roots of the right-hand side are the square roots of $-d_1/A$ and $-d_2/A$, which are all distinct since $d_1 \neq d_2$, $d_1 \neq 0$, $d_2 \neq 0$. Again by Baker's result, one deduces that the solutions are explicitly bounded, which then gives an explicit bound for $n$. 
 
 The remaining case is when exactly two of the $m_j$'s are non-zero, with $\rho_j = 2$, and then  $m_j = q/2$. 
 The dependence modulo $q$ then means that 
 $(n+j)(n+j')$ is a square for distinct $j, j'$ between $1$ and $k$, which implies that 
 $(n+j)/g$ and $(n+j')/g$ are both squares 
 where $g = \operatorname{gcd}(n+j, n+j')$. 
 These squares have difference smaller than $k$, which implies that they are smaller than $k^2$. Moreover, $g$ divides $|j-j'| \leq k$, and then $g \leq k$, which gives $n \leq k^3$. 
 \end{proof}
 Here, we explicitly solve a particular case: 
 \begin{proposition}
 For $q = 2$, $(X_{n+1}, \dots, X_{n+5})$ are independent for all $n \geq 2$ and not for $n=1$. 
 \end{proposition}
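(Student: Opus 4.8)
\emph{Plan.} The plan is to apply the preceding lemma with $q=2$. Since $\mu_{s,2}$ equals $2$ when $s$ is a perfect square and $1$ otherwise, the lemma says that $(X_{n+1},\dots,X_{n+5})$ is independent if and only if there is no nonempty $S\subseteq\{1,\dots,5\}$ such that none of the $n+j$ ($j\in S$) is a perfect square and $\prod_{j\in S}(n+j)$ is a perfect square. Writing $n+j=a_jb_j^2$ with $a_j$ squarefree, this amounts to: there is no nonempty $S$ with $a_j>1$ for all $j\in S$ and $\prod_{j\in S}a_j$ a square. For $n=1$ one takes $S=\{1,2,5\}$: the corresponding integers are $2,3,6$, which are not squares and satisfy $2\cdot 3\cdot 6=36$; this gives the asserted dependence. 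So from now on I would assume $n\ge 2$, hence all five integers are $\ge 3$, and aim to show no such $S$ exists.

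\emph{Reducing the squarefree parts.} First I would note that a prime $p>5$ divides at most one of five consecutive integers, hence at most one $a_j$; for $\prod_{j\in S}a_j$ to be a square $p$ must divide an even number of them, hence none. Likewise exactly one of $n+1,\dots,n+5$ is divisible by $5$, so $5$ divides at most one $a_j$ and therefore none. Thus $a_j\in\{1,2,3,6\}$, and since $a_j>1$ on $S$ we get $a_j\in\{2,3,6\}$ there. Moreover the values $2m^2$ are spaced by more than $4$ (and similarly $3m^2$, $6m^2$), so at most one $n+j$ has $a_j=2$, at most one has $a_j=3$, at most one has $a_j=6$. A product of distinct elements of $\{2,3,6\}$ is a square only if all three occur ($2\cdot 3\cdot 6=36$), so necessarily $|S|=3$ and there are positive integers $a,b,c$ with $\{2a^2,3b^2,6c^2\}\subseteq\{n+1,\dots,n+5\}$.

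\emph{Positional analysis.} These three numbers lie in a window of length $4$. Now $3b^2$ and $6c^2$ are distinct multiples of $3$, and five consecutive integers contain two multiples of $3$ only when these are exactly $3$ apart, so $n\not\equiv 0\pmod 3$ and $|b^2-2c^2|=1$, i.e. $3b^2-6c^2\in\{-3,3\}$. Also $2a^2$ and $6c^2$ are distinct even numbers of the window, so $|2a^2-6c^2|\in\{2,4\}$, i.e. $a^2-3c^2\in\{-2,-1,1,2\}$; reducing mod $3$ rules out $a^2-3c^2\in\{-1,2\}$, and $a^2=3c^2$ forces $c=0$, leaving $2a^2-6c^2\in\{-4,2\}$. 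Hence $2a^2-3b^2\in\{-4,2\}-\{-3,3\}=\{-7,-1,5\}$, and since both numbers lie in the window this difference is at most $4$ in absolute value, forcing $2a^2-3b^2=-1$. The two compatible sign choices give the systems $(\mathrm I)$ $a^2=3c^2-2,\ b^2=2c^2-1$ and $(\mathrm{II})$ $a^2=3c^2+1,\ b^2=2c^2+1$, with $c\ge 1$; here $c=1$ in $(\mathrm I)$ corresponds exactly to $n=1$, and $c=1$ is not a solution of $(\mathrm{II})$, so in fact $c\ge 2$.

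\emph{The crux, and the main obstacle.} It then remains to show $(\mathrm I)$ and $(\mathrm{II})$ have no solution with $c\ge 2$, and this is where the real work lies; everything above is routine congruence bookkeeping. For $(\mathrm{II})$ I would argue that $c$ odd gives $b^2\equiv 3\pmod 8$, so $c$ is even, and that $a^2-b^2=c^2$ exhibits $(b,c,a)$ as a primitive Pythagorean triple (as $\gcd(b,c)$ divides $b^2-2c^2=1$) with $b$ odd; writing $b=v^2-u^2$, $c=2uv$, $a=u^2+v^2$ with $\gcd(u,v)=1$ and substituting into $b^2=2c^2+1$ yields $v^4-10u^2v^2+u^4=1$, that is $(v^2-5u^2)^2=24u^4+1$, so one is reduced to showing $24u^4+1$ is a perfect square only for $u\in\{0,1\}$ (and $u=1$ then forces $v^2\in\{0,10\}$, which is impossible). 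For $(\mathrm I)$, and alternatively for $(\mathrm{II})$, one can instead multiply the two equations and complete the square: $24(ab)^2+1=(12c^2-7)^2$ in case $(\mathrm I)$ and $24(ab)^2+1=(12c^2+5)^2$ in case $(\mathrm{II})$, so $(12c^2\mp 7\text{-type},ab)$ solves the Pell equation $W^2-24Z^2=1$ with $W$ in a fixed residue class mod $12$, and one needs that among those Pell solutions $W$ only the smallest makes $(W+7)/12$ (resp. $(W-5)/12$) a perfect square. These statements about the quartic $24u^4+1=\square$ (equivalently, the non-existence of the relevant simultaneous Pell solutions) I expect to prove by an elementary descent: from $(w-1)(w+1)=24u^4$ and $\gcd(w-1,w+1)=2$ one gets a coprime factorization $d_1d_2=6u^4$ with $|d_1-d_2|=1$, and a finite case analysis on the admissible shapes of $d_1,d_2$, combined with congruences modulo $16$ (and $5$ and $7$), should leave only $u\le 1$. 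Carrying out this descent cleanly is the main obstacle; once it is done, combining with the reductions above shows that for every $n\ge 2$ no square subset exists, so $(X_{n+1},\dots,X_{n+5})$ is independent.
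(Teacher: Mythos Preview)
Your reduction is correct and essentially coincides with the paper's: both arguments show that a dependence forces three of the five consecutive integers to be of the forms $2a^2,\,3b^2,\,6c^2$. At that point the paper simply invokes Hajdu--Pint\'er \cite{HP}, who have listed all triples of distinct integers in intervals of length at most~$12$ whose product is a square; for length~$5$ the only positive triple is $(2,3,6)$, and the proof is finished.

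You instead try to finish self-containedly, and your positional analysis down to the two simultaneous Pell systems $(\mathrm I)$ and $(\mathrm{II})$ is clean and correct. The gap is the last step: you acknowledge that showing $24u^4+1$ is a perfect square only for $u\in\{0,1\}$ (equivalently, that in the Pell sequence $W_n^2-24Z_n^2=1$ no $Z_n$ with $n\ge 2$ is a perfect square, or that no $W_n$ with $n\ge 2$ has $(W_n+7)/12$ or $(W_n-5)/12$ a square) is ``the main obstacle'', and you do not carry it out. The plan you sketch---factor $(w-1)(w+1)=24u^4$ into coprime pieces and sieve by congruences modulo $16$, $5$, $7$---cannot work as stated: the sequence $Z_n\bmod m$ is periodic for every modulus $m$, and since $Z_1=1$ is a quadratic residue, infinitely many $Z_n$ will be quadratic residues modulo any fixed $m$, so congruences alone never terminate the argument. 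The factorisation $d(d+1)=6u^4$ does lead to a finite list of Thue equations such as $s^4-6r^4=1$ and $3r^4+1=2s^4$, but each of these still requires a genuine (Cohn/Ljunggren-type) argument using Jacobi symbols along the Pell recursion, not a congruence sieve. That argument is feasible but is real work you have not supplied; as written, the proof is incomplete. If you want a self-contained finish, either carry out a Cohn-style proof that $Z_n$ is a square only for $n\le 1$, or (as the paper does) cite \cite{HP}.
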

\begin{proof}
A dependence means that there exists a product 
of distinct non-square integers among $n+1, \dots, n+5$ which is a square. 
For a prime $p \geq 5$, at most one $p$-adic valuation is non-zero, which implies that all the $p$-adic valuations are even. 
Hence, the factors involved in the product are all squares multiplied by $2, 3$ or $6$. Since they differ by at most $4$, they cannot be in the same of the three "categories", which implies, since the product is a square, that there exist three numbers, respectively of the form $2x^2$, $3y^2$, $6z^2$, in the interval between $n+1$ and $n+5$. Now, Hajdu and Pint\'er  \cite{HP} have determined all the triples of distinct integers in intervals of length at most 12 whose product is a square. For length $5$, the only positive triple is $(2,3,6)$, which implies that the only dependence in the present setting is $X_2 X_3 X_6 = 1$. 
\end{proof}
\begin{remark}
The list given in \cite{HP} shows that for $q = 2$, there are dependences for quite large values of $n$ as soon as $k \geq 6$. For example, 
we have $X_{240} X_{243} X_{245} = 1$ for $k =6$ and $X_{10082}X_{10086}X_{10092} = 1$ for $k = 11$. 
\end{remark}

\section{Convergence of the empirical measure in the uniform case}  \label{uniform2}
In this section, $(X_p)_{p \in \mathcal{P}}$ are uniform on the unit circle, and $k \geq 1$ is a fixed integer. For $N \geq 1$, we consider the empirical measure of the $N$ first $k$-tuples: 
$$\mu_{k,N} := \frac{1}{N}\sum_{n=1}^N 
\delta_{(X_{n+1}, \dots, X_{n+k})}.$$ 
It is reasonable to expect that $\mu_{k,N}$ tends to the uniform distribution on $\mathbb{U}^k$, which is the common distribution of 
$(X_{n+1}, \dots X_{n+k})$ for all but finitely many values of $n$.  
In order to prove this result, we will estimate the second moment of the Fourier transform of $\mu_{k,N}$, given by 
$$\hat{\mu}_{k,N}(m_1, \dots, m_k) 
= \int_{\mathbb{U}^k} \prod_{j=1}^k 
z_j^{m_j} d\mu_{k,N} (z_1, \dots, z_k).$$ 
\begin{proposition} \label{momentorder2}
Let $m_1, \dots, m_k$ be integers, not all 
equal to zero. Then, for all $N > N' \geq 0$, 
$$\mathbb{E} \left[ 
\left|\sum_{n=N'+1}^N \prod_{j=1}^k X_{n+j}^{m_j} \right|^2 \right] 
\leq k (N-N') $$
and there exists $C_{m_1, \dots, m_k} \geq 0$, independent of $N$ and $N'$, such that  
$$N-N' \leq \mathbb{E} \left[ 
\left|\sum_{n=N'+1}^N \prod_{j=1}^k X_{n+j}^{m_j} \right|^2 \right] 
 \leq N - N' + C_{m_1, \dots, m_k}.$$
Moreover, under the same assumption, 
$$\mathbb{E} \left[ |\hat{\mu}_{k,N}(m_1, \dots, m_N)|^2
\right] \leq \frac{k}{N},$$
$$\frac{1}{N} \leq \mathbb{E} \left[ |\hat{\mu}_{k,N}(m_1, \dots, m_N)|^2
\right] \leq \frac{1}{N} + \frac{C_{m_1, \dots, m_k}}{N^2}.$$
Finally, for $k \in \{1,2\}$, one can take
$C_{m_1}$ or $C_{m_1, m_2}$ equal to $0$, 
and for $k = 3$, one can take 
$C_{m_1, m_2,m_3} = 2$ if $(m_1,m_2,m_3)$ is proportional to $(2,1,-4)$ and $C_{m_1,m_2,m_3} = 0$ otherwise. 
\end{proposition}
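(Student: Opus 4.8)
The plan is to replace the product $\prod_{j=1}^k X_{n+j}^{m_j}$ by a single variable and turn the second moment into a counting problem. Extending $X$ multiplicatively to positive rationals as before, I set $r_n := \prod_{j=1}^k (n+j)^{m_j} \in \mathbb{Q}_{>0}$, so that $\prod_{j=1}^k X_{n+j}^{m_j} = X_{r_n}$ and $X_{r_n}\overline{X_{r_{n'}}} = X_{r_n/r_{n'}}$. Since $X_r$ is uniform on $\mathbb{U}$ for every positive rational $r \neq 1$, one has $\mathbb{E}[X_r] = \mathds{1}_{r=1}$, and therefore
\[
\mathbb{E}\Big[\,\Big|\sum_{n=N'+1}^N \prod_{j=1}^k X_{n+j}^{m_j}\Big|^2\,\Big]
= \sum_{N' < n,\, n' \le N} \mathds{1}_{r_n = r_{n'}}
= (N - N') + E_{N',N},
\]
where $E_{N',N} := \#\{(n,n') : N' < n \neq n' \le N,\ r_n = r_{n'}\} \ge 0$. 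The lower bound $N - N'$ is then immediate, and everything reduces to bounding $E_{N',N}$ from above.

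The next step is to study the real function $g(x) := \sum_{j=1}^k m_j \log(x+j)$ on $(-1,\infty)$, so that $r_n = e^{g(n)}$. Its derivative is $g'(x) = P(x)/\prod_{j=1}^k (x+j)$ with $P(x) := \sum_{j=1}^k m_j \prod_{i \neq j}(x+i)$ of degree at most $k-1$; evaluating $P$ at $x = -j_0$ gives $m_{j_0}\prod_{i \neq j_0}(i - j_0)$, which is nonzero for at least one $j_0$ since the $m_j$ are not all zero, so $P \not\equiv 0$. Hence $g'$ has at most $k-1$ zeros, $(-1,\infty)$ is a union of at most $k$ maximal intervals of monotonicity of $g$, and each equation $g(x) = L$ has at most $k$ real solutions. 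Consequently every fibre $\{n' \ge 1 : r_{n'} = r_n\}$ has at most $k$ elements, so $E_{N',N} \le (k-1)(N-N')$, which gives the bound $k(N-N')$. Moreover, since $P$ has finitely many real zeros, $g$ is eventually strictly monotone, hence injective on a half-line $(x_0,\infty)$; so in any pair counted by $E_{N',N}$ the smaller index is at most $x_0$, which leaves only finitely many ordered pairs overall, their total number being a quantity $C_{m_1,\dots,m_k}$ depending only on $m_1,\dots,m_k$. This gives the upper bound $N-N' + C_{m_1,\dots,m_k}$, and the four inequalities for $\hat{\mu}_{k,N}(m_1,\dots,m_k) = \frac1N\sum_{n=1}^N \prod_{j=1}^k X_{n+j}^{m_j}$ follow by setting $N' = 0$ and dividing by $N^2$.

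Finally I would nail down $C$ for $k \le 3$ by analysing the coincidences $r_n = r_{n'}$, $n \neq n'$, directly. For $k = 1$ there are none, since $t \mapsto t^{m_1}$ is injective on $(0,\infty)$; hence $C_{m_1} = 0$. For $k = 2$ one may assume $\gcd(m_1,m_2) = 1$, as dividing the exponents by their gcd leaves the set of coincidences unchanged; the cases $m_1 = 0$, $m_2 = 0$ and $m_1 + m_2 = 0$ make $r_n$ a strictly monotone function of $n$, and in the remaining case, reducing to $m_1 > 0 > m_2$ and setting $m_2 = -\ell$, one rules out $r_n = r_{n'}$ for $n < n'$ by elementary estimates using the strict concavity of $\log$ and $\gcd(n+1,n+2) = 1$; hence $C_{m_1,m_2} = 0$. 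For $k = 3$ the same scheme applies, but one must also keep track of which of $n+1, n+2, n+3$ can be divisible by a given prime, in the spirit of the proof of Proposition~\ref{1.2}; this leaves the proportionality class of $(2,1,-4)$ as the only one admitting a coincidence, with $r_2 = r_7 = 36/625$ the unique such pair, so that $E_{N',N} \le 2$ and one may take $C_{2,1,-4} = 2$ and $C_{m_1,m_2,m_3} = 0$ otherwise.

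I expect this last step to be the main obstacle. The two general bounds come cheaply from the counting identity and the count of critical points of $g$; but extracting the \emph{exact} constants for $k = 2, 3$ forces a genuine, if elementary, case analysis of the Diophantine equation $\prod_{j=1}^k (n+j)^{m_j} = \prod_{j=1}^k (n'+j)^{m_j}$ — in particular, for $k = 3$, verifying that the coincidence $r_2 = r_7$ in the class of $(2,1,-4)$ is truly the only one.
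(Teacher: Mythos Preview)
Your approach is essentially identical to the paper's: the same counting identity, the same use of the derivative to bound the number of monotonicity intervals of $x\mapsto\prod_j(x+j)^{m_j}$ (your $P$ is the paper's $Q$), and the same eventual-monotonicity argument for the finiteness of $C_{m_1,\dots,m_k}$. The paper does carry out the $k\le 3$ Diophantine analysis in full rather than sketching it; the argument proceeds by reducing (after sign normalisation and a parity observation) to the case where $n_1+2-s=2^\ell(n_2+2-s)$ for $s=\pm1$, ruling out $\ell\ge 2$ by a $2$-adic valuation bound, and for $\ell=1$ forcing $2j+s$ to be a power of $3$ and $2j+4s$ to be $3$-smooth, which leaves only $(n_1,n_2)\in\{(7,3),(7,2),(25,11)\}$ to check by hand --- so your expectation that this is the laborious step is correct, and your identification of $r_2=r_7$ in the class of $(2,1,-4)$ as the unique coincidence is exactly what the paper establishes.
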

\begin{proof}
We have, using the completely multiplicative extension of $X_r$ to all $r \in \mathbb{Q}_+^*$: 
$$\mathbb{E} \left[ 
\left|\sum_{n=N'+1}^N \prod_{j=1}^k X_{n+j}^{m_j} \right|^2 \right] 
 = \sum_{N' < n_1, n_2 \leq N}
 \mathbb{E} \left[X_{\prod_{j=1}^{k}
  (n_1 + j)^{m_j} / (n_2 + j)^{m_j} } \right],$$
  and then the left-hand side is equal to the number of couples $(n_1, n_2)$ 
  in $\{N'+1, \dots, N\}^2$ such that 
  \begin{equation}\prod_{j=1}^k (n_1 + j)^{m_j}
 =  \prod_{j=1}^k (n_2 + j)^{m_j}.
 \label{n1n2}
 \end{equation}
 The number of trivial solutions $n_1 = n_2 $ of this equation is equal to $N - N'$, which gives a lower bound on the second moment we have to estimate. 
On the other hand,  the derivative of the rational fraction $\prod_{j=1}^k (X + j)^{m_j}$ can be written as the product of $\prod_{j=1}^k (X + j)^{m_j - 1}$, which is strictly positive on $\mathbb{R}_+$, by the 
polynomial 
$$Q(X) = \prod_{j=1}^k (X+j)  \left[
\sum_{j=1}^k \frac{m_j}{X + j} \right].$$
 The polyomial $Q$ has degree at most $k-1$ and is non-zero, since $(m_1, \dots, m_k)
 \neq (0, \dots, 0)$ and then $\prod_{j=1}^k
 (X+j)^{m_j}$ is non-constant. 
 We deduce that $Q$ has at most $k-1$ zeros, and then on $\mathbb{R}_+$, $\prod_{j=1}^k (X+j)^{m_j}$ is strictly monotonic on each of at most $k$ intervals of $\mathbb{R}_+$, whose bounds are $0$, the positive zeros of $Q$ and $+\infty$. Hence, for each choice of $n_1$, there are at most $k$ values of $n_2$ 
 satisfying \eqref{n1n2}, i.e. at most one in each interval, which gives the upper bound $k(N-N')$ for the moment we are estimating. 

Moreover, since $\prod_{j=1}^k (X+j)^{m_j}$ is strictly monotonic on an interval 
of the form $[A, \infty)$ for some $A > 0$, we deduce that for any non-trivial 
solution $(n_1,n_2)$ of  \eqref{n1n2}, the minimum of $n_1$ and $n_2$ is at most $A$. 
Hence, there are finitely many possibilities for the common value of the two sides of 
\eqref{n1n2}, and for each of these values, at most $k$ possibilities for $n_1$ and for  $n_2$. Hence, for fixed $(m_1, \dots, m_k)$, the total number of non-trivial solutions of \eqref{n1n2} is finite, which gives the bound $N-N'+ C_{m_1, \dots, m_k}$ of the proposition. 

The statement involving the empirical measure is deduced by taking $N' = 0$ and by dividing everything by $N^2$. 

The claim for $k \leq 3$ is an immediate consequence of the following statement we will prove now: the only integers $n_1 > n_2 \geq 1$, $(m_1,m_2,m_3) \neq (0,0,0)$, 
such that \begin{equation}
(n_1+1)^{m_1} (n_1+2)^{m_2} 
(n_1+3)^{m_3}  = (n_2+1)^{m_1} (n_2+2)^{m_2} 
(n_2+3)^{m_3} \label{n1n22}
\end{equation}
are $n_1 = 7$, $n_2 = 2$, $(m_1,m_2,m_3)$ proportional to $(2,1,-4)$, which corresponds  to  the equality: $$
 8^2 \cdot 9 \cdot 10^{-4} = 3^2 \cdot 4 \cdot 5^{-4}.$$
 If $m_1, m_2, m_3$ have the same sign and are not all zero, $(n+1)^{m_1}(n+2)^{m_2}
 (n+3)^{m_3}$ is strictly monotonic in $n \geq 1$, and then we cannot get a solution of  \eqref{n1n22} with $n_1 > n_2$. 
 By changing all the signs if necessary, we may assume that one of the integers $m_1, m_2,m_3$ is strictly negative and the others are nonnegative. For $n \geq 1$, 
 the fraction obtained by writing 
 $(n+1)^{m_1}(n+2)^{m_2}
 (n+3)^{m_3}$ can only be simplified by prime 
 factors dividing two of the integers $n+1, n+2, n+3$, and then only by a power of $2$. 
If $m_2 < 0$ and then $m_1, m_3 \geq 0$, the numerator and the denominator have different parity, and then the fraction is irreducible for all $n$: we do not get any solution of \eqref{n1n22} in this case. 
Otherwise, $m_1$ or $m_3$ is strictly negative. If $(n_1, n_2)$ solves \eqref{n1n22}, let us define $s := 1$ and $j := n_2 + 1$ if $m_1 < 0$, and $s := -1$ and $j := n_2 + 3$ if $m_3 < 0$.   
The denominators of the two fractions corresponding to the two sides of \eqref{n1n22} are respectively a power of 
$j$ and the same power of $n_1 + 2 - s$: if \eqref{n1n22} is satisfied, these denominators should differ only by a power of $2$, since the fractions can be only simplified by such a power. 
Hence, $n_1 + 2 - s = 2^{\ell} j$ for some $\ell \geq 0$, and by looking at the numerators of the fractions, we deduce that there exists $r \geq 0$ such that 
$$2^r (j+s)^{m_2} (j+2s)^{m_{2 + s}}
=  (2^{\ell}j+s)^{m_2} (2^{\ell}j+2s)^{m_{2 + s}}.$$
If $\ell \geq 2$, the ratios 
$(2^{\ell}j+s)/(j+s)$ and 
$(2^{\ell}j+2s)/(j+2s)$ are at least 
$(4\cdot2 + 2)/(2 + 2) = 5/2$ since $j \geq n_2 +1 \geq 2$ and $|2s| \leq 2$, and then 
the ratio between the right-hand side
and the left-hand side of the previous equality is at least $(5/2)^{m_{2 + s} + m_2} 2^{-r}$, 
which gives 
$$ 2^r \geq (5/2)^{m_{2 + s} + m_2}.$$
On the  other hand, the $2$-adic valution
of the right-hand side is $m_{2+s}$ since 
$2^{\ell} j + 2s \equiv 2$ modulo 4, whereas the valuation of the left-hand side is at least $r$, which gives 
$$ 2^r \leq 2^{m_{2+s}}.$$
We then get a contradiction for $\ell \geq 2$, except in the case $m_{2+s} = m_2 = 0$, 
where we already know that there is no solution of \eqref{n1n22}. 
If $\ell = 1$, we get 
$$2^r (j+s)^{m_2} (j+2s)^{m_{2 + s}}
=  (2j+s)^{m_2} (2j+2s)^{m_{2 + s}}.$$
In this case, the prime factors of $2j+s$, which are odd ($|s| = 1$), should divide $j+s$ or $j+2s$, then $2j+2s$ or $2j+4s$, and finally $s$ or $3s$. Hence, $2j+s$ is a power of $3$. 
Similarly, the odd factors of $j+2s$, and then of $2j + 4s$, should divide $2j+s$ or $2j+2s$, and then $s$ or $3s$: $2j+4s$ is the product of a power of $2$ and a power of $3$. 
If we write $2j+s = 3^a$, $2j + 4s = 2^b 3^c$, we must have $|3^a - 2^b 3^c| = 3$.
If $a \leq 1$, we have $2j + s \leq 3$. If $s = 1$, we get $n_2 + 1 = j \leq 1$, and if $s = -1$, we get $n_2 + 3 = j \leq 2$, which is impossible. 
If $a \geq 2$, $3^a$ is divisible by $9$, and then $2^b 3^c$ is congruent to $3$ or $6$ modulo $9$, which implies $c = 1$, and then $|3^{a-1} - 2^b| = 1$. Now, by induction, one proves that the order of $2$ modulo $3^{a-1}$ is equal to $2.3^{a-2}$ (i.e. $2$ is a primitive root modulo the powers of $3$). This result is classical, and can be deduced, for example, 
from Rosen \cite{Rosen}, Theorem 8.9. For sake of completeness, we give a proof here. 
The result is easy to check be direct computation for $a = 2$ and $a = 3$. Let us assume that it is true for all values until $a \geq 3$. The order of $2$ modulo $3^a$ is a multiple 
of the order of $2$ modulo $3^{a-1}$, and then a multiple of  $2.3^{a-2}$ by assumption. On the other hand, it is a divisor of $2.3^{a-1}$
by Euler's theorem. Hence, it is either $2.3^{a-2}$ or $2.3^{a-1}$. Moreover, since $2.3^{a-3}$ is assumed to be 
the order of $2$ modulo $3^{a-2}$ but strictly smaller than the order of $2$ modulo $3^{a-1}$, we have 
$$2^{2.3^{a-3}} = 1 + u.3^{a-2}$$
where $u$ is not divisible by $3$. Raising to the cube, we deduce 
$$2^{2.3^{a-2}} = 1 + 3 u.3^{a-2} +  3 u^2.3^{2a-4} + u^3 3^{3a-6} = 1 + v.3^{a-1}$$
where 
$$ v = u + u^2.3^{a-2} + u^3. 3^{2a-5}$$
is not divisible by $3$ (recall that $a \geq 3$ here). Hence, the order of $2$ modulo $3^a$ is not $2.3^{a-2}$: it can only be $2.3^{a-1}$, which proves by induction 
that $2$ is a primitive root of $3^{a-1}$ for all $a \geq 2$. 
Now, in the present situation, the order of $2$ modulo $3^{a-1}$, i.e. $2.3^{a-2}$, should divide $2b$, since $2^b \equiv \pm 1$ modulo $3^{a-1}$, 
and then  $b \geq 3^{a-2}$ ($b = 0$ is not possible) which implies 
$2^{3^{a-2}} \leq 3^{a-1} + 1$, i.e. $a \in \{2,3\}$. 

If $a = 2$ and $s = 1$, we get $2j+1 = 9$, 
$j= 4$, and then $n_1 = 7$, $n_2 = 3$. 
We should solve $4^{m_1} 5^{m_2} 6^{m_3} = 
8^{m_1} 9^{m_2} 10^{m_3}$. Taking the $3$-adic valuation gives $m_3 = 2 m_2$, taking the $5$-adic valuation gives $m_3 = m_2$, and then $m_2 = m_3 = 0$, which implies $m_1 = 0$. 

If $a = 2$ and $s = -1$, we get $2j - 1 = 9$, $j=5$, $n_1 = 7$, $n_2 = 2$, which gives the equation $3^{m_1} 4^{m_2} 5^{m_3} = 
8^{m_1} 9^{m_2} 10^{m_3}$. 
Taking the $2$-adic valuation gives $2m_2 = 3m_1 + m_3$, taking the $3$-adic valuation gives $m_1 = 2 m_2$, and then $(m_1,m_2,m_3)$ should be proportional to $(2,1,-4)$: in this case, we get one of the  solutions already mentioned. 

If $a = 3$, $2^b$ should be $8$ or $10$, and then $b = 3$, $2j+ s = 27$, $2j + 4s = 24$, $j = 14$, $s = -1$, $n_1 = 25$, $n_2 = 11$. We have to solve $12^{m_1} 13^{m_2} 14^{m_3} = 26^{m_1} 27^{m_2} 28^{m_3}$. 
Taking the $3$-adic valuation gives $m_1 = 3m_2$, taking the $13$-adic valuation gives $m_1 = m_2$, and then $m_1 = m_2 = m_3 = 0$.

\end{proof}
\begin{corollary}
For all $(m_1, \dots, m_k) \in \mathbb{Z}^k$, $\hat{\mu}_{k,N}(m_1, \dots, m_k)$ converges 
in $L^2$, and then in probability, to $\mathds{1}_{m_1 = \dots = m_k = 0}$, i.e. to the corresponding Fourier coefficient of the uniform distribution $\mu_{k}$ on $\mathbb{U}^k$. In other words, $\mu_{k,N}$ converges weakly in probability to $\mu_{k}$. 
\end{corollary}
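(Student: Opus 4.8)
The corollary follows directly from the moment estimates in Proposition \ref{momentorder2}. The plan is as follows.

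\textbf{Convergence of Fourier coefficients.} Fix $(m_1, \dots, m_k) \in \mathbb{Z}^k$. If all the $m_j$ vanish, then $\hat{\mu}_{k,N}(0,\dots,0) = 1$ for every $N$, which is exactly the corresponding Fourier coefficient of $\mu_k$, so there is nothing to prove. If $(m_1,\dots,m_k) \neq (0,\dots,0)$, then Proposition \ref{momentorder2} (the last displayed chain of inequalities) gives
$$\mathbb{E}\left[ |\hat{\mu}_{k,N}(m_1,\dots,m_k)|^2 \right] \leq \frac{k}{N},$$
and since the target Fourier coefficient is $0$ in this case, this says precisely that $\hat{\mu}_{k,N}(m_1,\dots,m_k) \to 0$ in $L^2$ as $N \to \infty$. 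Convergence in probability is an immediate consequence of $L^2$ convergence via Markov's (Chebyshev's) inequality.

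\textbf{From Fourier coefficients to weak convergence.} It remains to upgrade the pointwise-in-frequency convergence to weak convergence in probability of the random measures $\mu_{k,N}$ to $\mu_k$. Since $\mathbb{U}^k$ is compact, weak convergence of probability measures is metrizable, and it suffices to test against a countable dense family of continuous functions; by the Stone--Weierstrass theorem the trigonometric polynomials in $z_1,\dots,z_k$ (finite linear combinations of the characters $(z_1,\dots,z_k) \mapsto \prod_j z_j^{m_j}$) are dense in $C(\mathbb{U}^k)$ for the uniform norm, and one may restrict to those with rational (or dyadic) coefficients to get a countable family. For each such trigonometric polynomial $P = \sum_{\mathbf{m} \in F} c_{\mathbf{m}} \prod_j z_j^{m_j}$ with $F$ finite, we have $\int P \, d\mu_{k,N} = \sum_{\mathbf{m} \in F} c_{\mathbf{m}} \hat{\mu}_{k,N}(\mathbf{m})$, which converges in probability to $\sum_{\mathbf{m} \in F} c_{\mathbf{m}} \mathds{1}_{\mathbf{m} = \mathbf{0}} = \int P \, d\mu_k$ by the finite sum of the convergences established above. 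Passing to a further subsequence if needed so that the (countably many) convergences hold almost surely simultaneously, and then using the uniform density of such $P$ together with the fact that all the $\mu_{k,N}$ and $\mu_k$ are probability measures (hence a $3\eps$-argument controls $\int f\, d\mu_{k,N} - \int f \, d\mu_k$ for arbitrary $f \in C(\mathbb{U}^k)$ by approximating $f$ by some $P$), one concludes weak convergence in probability.

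\textbf{Main obstacle.} There is no real obstacle here: the analytic content is entirely contained in Proposition \ref{momentorder2}, and the present corollary is a soft functional-analytic wrap-up. The only point requiring a little care is the standard reduction of weak convergence (in probability) of measures to convergence of countably many Fourier coefficients, i.e. checking that the subsequence/$3\eps$ argument indeed yields convergence in probability of $\int f \, d\mu_{k,N}$ for \emph{every} $f \in C(\mathbb{U}^k)$ and not merely for trigonometric polynomials; this is routine given compactness of $\mathbb{U}^k$ and the uniform boundedness of the total masses.
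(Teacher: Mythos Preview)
Your proof is correct and follows exactly the approach the paper intends: the paper states the corollary with no proof at all, treating it as an immediate consequence of the $L^2$ bound $\mathbb{E}[|\hat{\mu}_{k,N}(m)|^2] \leq k/N$ from Proposition \ref{momentorder2}, together with the standard fact that convergence of all Fourier coefficients of probability measures on the compact group $\mathbb{U}^k$ is equivalent to weak convergence. Your write-up simply makes this explicit; the subsequence step is slightly more elaborate than necessary (the direct $3\eps$ argument already gives $\int f\,d\mu_{k,N}\to\int f\,d\mu_k$ in probability for every $f\in C(\mathbb{U}^k)$ without passing to subsequences), but there is no gap.
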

In this setting, we also have a strong law of large numbers, with an estimate of the 
rate of convergence, for sufficiently smooth test functions. Before stating the corresponding result, we will show the following lemma, which will be useful: 
\begin{lemma} \label{lemmaLLN}
Let $\epsilon > \delta \geq 0$, $C > 0$, and let $(A_n)_{n \geq 0}$ be a sequence of 
random variables such that $A_0 = 0$ and
for all $N > N' \geq 0$, 
$$\mathbb{E}[|A_{N} - A_{N'}|^2] 
\leq C (N-N')N^{2 \delta}.$$
Then, almost surely, $A_N = O(N^{1/2 + \epsilon})$: more precisely, we have for $M > 0$,   
$$\mathbb{P} \left(\sup_{N \geq 1} 
|A_N|/(N^{1/2 + \epsilon}) \geq M 
\right) \leq K_{\epsilon, \delta} C M^{-2},$$
where $K_{\epsilon, \delta} > 0$ depends 
only on $\delta$ and $\epsilon$. 
\end{lemma}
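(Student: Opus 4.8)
The plan is to use a dyadic (Kolmogorov–Doob) chaining argument. First I would control the increments of $A_N$ over dyadic blocks. Fix an integer $j \geq 0$ and consider the range $N \in (2^j, 2^{j+1}]$. For such $N$ one has $A_N = A_{2^j} + (A_N - A_{2^j})$, so it suffices to bound $A_{2^j}$ along the sparse dyadic sequence and separately bound $\sup_{2^j < N \leq 2^{j+1}} |A_N - A_{2^j}|$. For the first term, write $A_{2^j} = \sum_{i=1}^{j} (A_{2^i} - A_{2^{i-1}})$ and use the hypothesis $\mathbb{E}[|A_{2^i} - A_{2^{i-1}}|^2] \leq C\,2^{i-1}\,2^{2i\delta} \leq C\,2^{i(1+2\delta)}$; by the triangle inequality in $L^2$, $\|A_{2^j}\|_2 \leq \sum_{i=1}^j \sqrt{C}\,2^{i(1/2+\delta)} \leq K\sqrt C\, 2^{j(1/2+\delta)}$, which is $o(2^{j(1/2+\epsilon)})$ since $\epsilon > \delta$.

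The main obstacle — and the only place real work is needed — is the supremum over the block $(2^j, 2^{j+1}]$, since there is no monotonicity available and the $A_N$ need not form a martingale. Here I would invoke the classical maximal inequality of the Rademacher–Menshov / Kolmogorov type: for any sequence of random variables whose \emph{all} pairwise increments satisfy $\mathbb{E}[|A_N - A_{N'}|^2] \leq C(N-N')N^{2\delta}$, one has
$$\mathbb{E}\Big[\sup_{2^j < N \leq 2^{j+1}} |A_N - A_{2^j}|^2\Big] \leq C'\,(\log 2^j)^2\, 2^j\, 2^{2(j+1)\delta}$$
for an absolute constant $C'$; this is the standard bound obtained by bisecting the interval $\lceil j \rceil$ times and summing the $L^2$ norms of the sub-block increments with a factor $j$ from the number of dyadic levels. (If one prefers to avoid the logarithmic loss, one can instead note that the hypothesis is strong enough — it controls \emph{every} increment, not just consecutive ones — to run the argument of Billingsley's moment maximal inequality directly, but the logarithmic factor is harmless here since it is absorbed by $2^{j\epsilon}$.)

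With these two estimates in hand, I would finish by a Markov-plus-Borel–Cantelli union bound. For each $j$,
$$\mathbb{P}\Big(\sup_{2^j < N \leq 2^{j+1}} |A_N| \geq M\, 2^{j(1/2+\epsilon)}\Big) \leq \frac{2\,\mathbb{E}[A_{2^j}^2] + 2\,\mathbb{E}[\sup_{2^j<N\leq 2^{j+1}}|A_N-A_{2^j}|^2]}{M^2\, 2^{2j(1/2+\epsilon)}} \leq \frac{K_1 C\, j^2\, 2^{j(1+2\delta)}}{M^2\, 2^{j(1+2\epsilon)}},$$
and since $\epsilon > \delta$, the series $\sum_j j^2\, 2^{-2j(\epsilon-\delta)}$ converges to a constant depending only on $\epsilon$ and $\delta$. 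Summing over $j \geq 0$ gives
$$\mathbb{P}\Big(\sup_{N \geq 1} |A_N|/N^{1/2+\epsilon} \geq M\Big) \leq K_{\epsilon,\delta}\, C\, M^{-2},$$
(also handling the finitely many $N \leq 1$ trivially since $A_0 = 0$), which is exactly the claimed inequality; letting $M \to \infty$ yields $A_N = O(N^{1/2+\epsilon})$ almost surely. The one point to be careful about is bookkeeping the constants so that $K_{\epsilon,\delta}$ genuinely depends only on $\epsilon$ and $\delta$ and not on $C$ or on the law of $(A_n)$ — this follows because every step above is homogeneous of degree one in $C$ and the geometric/number-of-levels factors depend only on $\epsilon - \delta$.
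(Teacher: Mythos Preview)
Your argument is correct and is, in spirit, the same dyadic chaining as the paper's proof; the difference is purely organizational. You split each dyadic block $(2^j,2^{j+1}]$ into a coarse part ($A_{2^j}$, bounded by telescoping in $L^2$) and a fine part (the block supremum, handled by invoking Rademacher--Menshov as a black box), then sum a Markov bound over $j$. The paper instead does the chaining in a single pass: it bounds, via Chebyshev, the probability that \emph{any} dyadic increment $|A_{(2\ell+1)2^q}-A_{2\ell\cdot 2^q}|$ exceeds $M[(2\ell+1)2^q]^{1/2+\epsilon'}$ with $\epsilon'=(\epsilon+\delta)/2$, sums over all $\ell,q\geq 0$, and on the complementary event telescopes $A_N$ along the binary expansion of $N$ to pick up at most $1+\log_2 N$ such increments. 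The two versions are equivalent in content---the Rademacher--Menshov inequality you cite is proved precisely by the binary-expansion argument the paper writes out---and both absorb the logarithmic loss into the slack $\epsilon-\delta>0$. One incidental simplification of your write-up: you do not need the $L^2$ triangle-inequality telescoping for $A_{2^j}$, since the hypothesis with $N'=0$ already gives $\mathbb{E}[|A_{2^j}|^2]\leq C\,2^{j(1+2\delta)}$ directly.
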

\begin{proof}
For $\ell, q \geq 0$, $M > 0$ and $\epsilon' := (\delta + \epsilon)/2 
\in (\delta, \epsilon)$, we have:
\begin{align*}\mathbb{P} \left(|A_{(2\ell + 1).2^q} - 
A_{(2\ell).2^q}| \geq M [(2\ell + 1).2^q]^{1/2 + \epsilon'} \right)
& \leq M^{-2} [(2\ell + 1).2^q]^{-1 -2 \epsilon'}
\mathbb{E} \left[|A_{(2\ell + 1).2^q} - 
A_{(2\ell).2^q}|^2\right]
\\ & \leq M^{-2}.C.2^q.[(2\ell + 1).2^q]^{2 \delta -1 -2 \epsilon'}
\\ & \leq M^{-2}.C.2^{-2q (\epsilon' - \delta)}
(2\ell + 1)^{-1 - 2 (\epsilon' - \delta)}.
\end{align*}
Since $\epsilon' > \delta$, we deduce that the probability that 
\begin{equation}
|A_{(2\ell + 1).2^q} - 
A_{(2\ell).2^q}| < M [(2\ell + 1).2^q]^{1/2 + \epsilon'} \label{xcvb}
\end{equation}
for all $\ell, q \geq 0$ is at least $1 - D CM^{-2}$, where $D$ depends only on $\epsilon'$ and $\delta$, and then only on $\delta$ and $\epsilon$. 
Now, if \eqref{xcvb} occurs for all $\ell, q \geq 0$, if 
we take the binary expansion  $N = \sum_{j=0}^{\infty} \delta_j
 2^j$
with $\delta_j \in \{0,1\}$, and if $N_r 
= \sum_{j=r}^{\infty} \delta_j 2^j$ for all $r \geq 0$, then we get $|A_{N_r} - A_{N_{r+1}}|
= 0$ if $\delta_r = 0$, and 
\begin{align*} |A_{N_r} - A_{N_{r+1}}|
& = |A_{2^r (2 (N_{r+1}/2^{r+1}) + 1)}
 - A_{2^r (2 N_{r+1}/2^{r+1})}|
\\ &   \leq M[2^r (2( N_{r+1}/2^{r+1}) + 1)]^{1/2 + \epsilon'} 
= M (N_r)^{1/2 + \epsilon'} \leq M N^{1/2 + \epsilon'}
\end{align*}
if $\delta_r = 1$. Adding these inequalities from $r = 0$ to $\infty$, we deduce that $|A_N| \leq M \mu(N) N^{1/2
+ \epsilon'}$, where $\mu(N)$ is the number of $1$'s in the binary expansion of $N$. Hence, 
$$|A_N| \leq M \left(1 + (\log N/\log 2)
\right) N^{1/2 + \epsilon'} 
 < B M N^{1/2 + \epsilon}  ,$$
where $B > 0$ depends only on $\epsilon'$ and $\epsilon$ (recall that $\epsilon > \epsilon'$), and then only on $\delta$ and $\epsilon$. 
We then have, for $M' := BM$: 
$$\mathbb{P} \left(\exists N \geq 1, 
|A_N| \geq  M' N^{1/2 + \epsilon} \right)
\leq D C M^{-2} = D C B^{2} (M')^{-2},$$
which gives the desired result after replacing $M'$ by $M$. 
\end{proof}
From this lemma, we deduce the following:   
\begin{proposition}
Almost surely, $\mu_{k,N}$ weakly converges to $\mu_k$. More precisely, the following holds with probability one: for all $u > k/2$, for all continuous functions $f$ from 
$\mathbb{U}^k$ to $\mathbb{C}$ such that 
$$\sum_{m \in \mathbb{Z}^k} |\hat{f}(m)| 
\, ||m||^{u} < \infty,$$ 
$|| \cdot ||$ denoting any norm on 
$\mathbb{R}^k$, 
and for all $\epsilon > 0$, 
$$\int_{\mathbb{U}^k} f d \mu_{k,N} 
= \int_{\mathbb{U}^k} f d \mu_{k}  
+ O(N^{-1/2 + \epsilon}).$$
\end{proposition}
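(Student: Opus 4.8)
The plan is to reduce the statement for general test functions $f$ to the Fourier-coefficient estimates already available from Proposition \ref{momentorder2}, via the summability hypothesis on $\hat f$, and to upgrade the $L^2$ bound on each Fourier coefficient to an almost sure rate using Lemma \ref{lemmaLLN}. First I would fix a nonzero $m=(m_1,\dots,m_k)\in\mathbb{Z}^k$ and apply Lemma \ref{lemmaLLN} to the sequence $A_N^{(m)}:=\sum_{n=1}^N\prod_{j=1}^k X_{n+j}^{m_j}$ (with $A_0^{(m)}=0$): the first inequality of Proposition \ref{momentorder2} gives $\mathbb{E}[|A_N^{(m)}-A_{N'}^{(m)}|^2]\le k(N-N')$, so the hypothesis of Lemma \ref{lemmaLLN} holds with $C=k$ and $\delta=0$. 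Hence for each fixed $\epsilon>0$, almost surely $A_N^{(m)}=O(N^{1/2+\epsilon})$, equivalently $\hat\mu_{k,N}(m)=O(N^{-1/2+\epsilon})$, and the tail bound in Lemma \ref{lemmaLLN} is $\mathbb{P}(\sup_{N\ge1}|A_N^{(m)}|/N^{1/2+\epsilon}\ge M)\le K_{\epsilon,0}\,k\,M^{-2}$.

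The main obstacle is passing from "for each fixed $m$, almost surely" to "almost surely, for all $m$ simultaneously", with a bound summable enough to reconstruct $\int f\,d\mu_{k,N}$. The natural fix is to let $M$ depend on $m$: choose $M_m = \|m\|^{v}$ for a suitable exponent $v$ with $k/2 < v < u$ (possible since $u>k/2$), so that $\sum_{m\neq 0}K_{\epsilon,0}\,k\,M_m^{-2}=K_{\epsilon,0}\,k\sum_{m\neq0}\|m\|^{-2v}<\infty$ because $2v>k$. By Borel--Cantelli, almost surely there are only finitely many $m$ with $\sup_{N}|A_N^{(m)}|/N^{1/2+\epsilon}\ge\|m\|^{v}$; enlarging the implied constant to absorb those finitely many exceptional $m$, we get that almost surely there is a (random) constant $C_\omega$ with $|\hat\mu_{k,N}(m)|\le C_\omega\,\|m\|^{v}\,N^{-1/2+\epsilon}$ for all $m\neq0$ and all $N\ge1$. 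A diagonal argument over a countable dense set of $\epsilon$'s (say $\epsilon=1/\ell$, $\ell\ge1$) makes this hold simultaneously for all $\epsilon>0$, and since $v<u$ was arbitrary in $(k/2,u)$ we may as well fix one such $v$.

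Then I would expand $f$ in its Fourier series: for $f$ continuous on $\mathbb{U}^k$ with $\sum_m|\hat f(m)|\,\|m\|^u<\infty$, the series $f=\sum_m\hat f(m)\,e_m$ converges absolutely and uniformly (in particular $\hat f(0)=\int_{\mathbb{U}^k}f\,d\mu_k$), so integrating against $\mu_{k,N}$ term by term is justified and
\begin{equation*}
\int_{\mathbb{U}^k}f\,d\mu_{k,N}-\int_{\mathbb{U}^k}f\,d\mu_k=\sum_{m\neq0}\hat f(m)\,\hat\mu_{k,N}(m).
\end{equation*}
Bounding the right-hand side using the almost sure estimate above,
\begin{equation*}
\left|\int_{\mathbb{U}^k}f\,d\mu_{k,N}-\int_{\mathbb{U}^k}f\,d\mu_k\right|\le C_\omega\,N^{-1/2+\epsilon}\sum_{m\neq0}|\hat f(m)|\,\|m\|^{v}\le C_\omega\,N^{-1/2+\epsilon}\sum_{m\neq0}|\hat f(m)|\,\|m\|^{u},
\end{equation*}
the last sum being finite by hypothesis (since $v<u$ and $\|m\|\ge1$ on the nonzero lattice points, up to adjusting the norm by a constant). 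This is the claimed $O(N^{-1/2+\epsilon})$, with an implied constant that is random but finite almost surely, uniform over all admissible $f$ once the ambient norm and the constant $\sum_m|\hat f(m)|\|m\|^u$ are fixed. Finally, weak convergence of $\mu_{k,N}$ to $\mu_k$ follows because trigonometric polynomials (which trivially satisfy the summability condition) are dense in $C(\mathbb{U}^k)$. The one point needing care is that the exceptional null set must be chosen before $f$, which is why the estimate is phrased as holding simultaneously for all $m$ and all $\epsilon$; the per-$f$ statement is then a deterministic consequence on the complementary event.
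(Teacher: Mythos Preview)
Your proof is correct and follows essentially the same route as the paper: apply Lemma~\ref{lemmaLLN} (with $C=k$, $\delta=0$) to the partial sums $A_N^{(m)}$, use the tail bound with $M=\|m\|^v$ and Borel--Cantelli over $m$ to get a uniform-in-$m$ almost sure bound $|\hat\mu_{k,N}(m)|\le C_\omega\|m\|^v N^{-1/2+\epsilon}$, and then sum against $\hat f$. The paper does exactly this, with the cosmetic difference that it takes $M=\|m\|^u$ directly (with $u$ ranging over all values $>k/2$) rather than introducing an auxiliary $v\in(k/2,u)$.

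One small point of bookkeeping: the proposition places the quantifier ``for all $u>k/2$'' \emph{inside} the almost-sure event, so a single null set must work for every $u$. You diagonalize over $\epsilon$ but then write ``we may as well fix one such $v$''; with $v$ fixed you only cover those $u\ge v$. To match the stated quantifier order, run the same diagonal over a sequence $v_\ell\searrow k/2$ (the paper does this implicitly by asserting the bound ``for all $\epsilon>0$, $u>k/2$'' after Borel--Cantelli). This is a one-line fix and not a genuine gap. Your remark about $\|m\|\ge 1$ possibly failing for a general norm is handled correctly: only finitely many nonzero lattice points can have $\|m\|<1$, and those are absorbed into $C_\omega$.
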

\begin{remark}
By Cauchy-Schwarz inequality, we have 
$$\sum_{m  \in \mathbb{Z}^k} 
|\hat{f}(m)| (1+||m||)^{u}
\leq \left(\sum_{m  \in \mathbb{Z}^k} 
|\hat{f}(m)|^2 (1 + ||m||)^{4u} \right)^{1/2}
\left(\sum_{m  \in \mathbb{Z}^k} 
(1+||m||)^{- 2u} \right)^{1/2},$$
which implies that the assumption on $f$ given in the proposition is satisfied for all $f$ in the Sobolev space $H^s$ as soon 
as $s > k$.

Unfortunately, the proposition does not apply if $f$ is a product of indicators of arcs. The weak convergence implies that  
$$\int_{\mathbb{U}^k} f d\mu_{k,N} \underset{N \rightarrow \infty}{\longrightarrow} 
 \int_{\mathbb{U}^k} f d\mu_{k} $$
 even in this case, but we don't know at which rate this convergence occurs. 

\end{remark}
\begin{proof}
 From Proposition \ref{momentorder2}, and 
Lemma \ref{lemmaLLN} applied to $\epsilon > 0$, $\delta = 0$ and  
$$A_N := N \hat{\mu}_{k,N} (m),$$
we get, for all $m \in \mathbb{Z}^k \backslash \{0\}$, 
$M  > 0$,
$$\mathbb{P} \left(\sup_{N \geq 1} 
| \hat{\mu}_{k,N} (m)|/N^{-1/2 + \epsilon} \geq M 
\right) \leq k K_{\epsilon, 0}  M^{-2}$$
For fixed $u > k/2$, we apply this estimate to $M = ||m||^u$ and get 
$$\mathbb{P} \left(\sup_{N \geq 1} 
| \hat{\mu}_{k,N} (m)|/N^{-1/2 + \epsilon} \geq ||m||^{u}  \right) \leq 
k K_{\epsilon, 0} ||m||^{- 2u}.$$
Since $- 2u < -k$, we deduce, by Borel-Cantelli lemma, that almost surely, 
$$\sup_{N \geq 1} 
| \hat{\mu}_{k,N} (m)|/(N^{-1/2 + \epsilon}
||m||^u) \leq 1$$
for all but finitely many $m \in \mathbb{Z}^k \backslash \{0\}$. Therefore,  almost surely, 
$$\sup_{m \in \mathbb{Z}^k \backslash \{0\}} 
\sup_{N \geq 1} 
| \hat{\mu}_{k,N} (m)|/(N^{-1/2 + \epsilon}
||m||^u) < \infty$$
i.e. 
$$\hat{\mu}_{k,N} (m)
= O(N^{-1/2 + \epsilon} ||m||^u)$$
for $m \in \mathbb{Z}^k \backslash \{0\}$, $N \geq 1$. 
 Almost surely, this estimates simultaneously occurs for all rationals $u > k/2$ and $\epsilon > 0$  (with a random implicit constant in $O$, depending on $u$ and $\epsilon$) and then for all reals $u > k/2$ and $\epsilon > 0$.

Let us now assume that this almost sure property holds, let us fix $u > k/2$, $\epsilon > 0$, and let $f$ be a function 
satisfying the assumptions of the proposition.  Since the Fourier coefficients of $f$ are summable (i.e. $f$ is in the Wiener algebra of $\mathbb{U}^k$), the corresponding Fourier series converges uniformly to a function which is necessarily equal to $f$, since it has the same Fourier coefficients. We can then 
write:
$$f(z_1, \dots, z_k) = 
\sum_{m_1, \dots, m_k \in \mathbb{Z}}
\hat{f}(m_1, \dots, m_k) \prod_{j=1}^k
z_j^{m_j},$$
which implies 
$$\int_{\mathbb{U}^k} f d\mu_{k,N} 
= \sum_{m \in \mathbb{Z}^k}
\hat{f}(m)
\hat{\mu}_{k,N} (m) 
= \int_{\mathbb{U}^k} f d\mu_{k} 
+ \sum_{m \in \mathbb{Z}^k \backslash \{0\}}
\hat{f}(m)
\hat{\mu}_{k,N} (m).$$
By assumption, the last sum is dominated by 
$$N^{-1/2 + \epsilon} \sum_{m \in \mathbb{Z}^k}
|\hat{f}(m)| ||m||^u,$$
which is finite by the assumptions made in the proposition,  and then $O(N^{-1/2 + \epsilon})$.

\end{proof}
\section{Moments of order different from two}  \label{moments}
Since we have a law of large numbers on
 $\mu_{k,N}$, with rate of decay of order $N^{-1/2+ \epsilon}$, it is natural to look if we have a central limit theorem. 
 In order to do that, a possibility consists in studying moments of sums in $n$ of products of variables from $X_{n+1}$ to $X_{n+k}$.
 For the sums $\sum_{n=1}^N X_n$, we do not have convergence to a non-zero  Gaussian random variable after normalization by $1/\sqrt{N}$. Indeed, the second moment of the absolute value of the renormalized sum $\frac{1}{\sqrt{N}} \sum_{n=1}^N X_n$
 is equal to $1$, so if this variable converges to a non-zero complex Gaussian variable, we need to have the convergence of 
 $$\mathbb{E} \left[\frac{1}{\sqrt{N}}\left|\sum_{n=1}^N
  X_n \right| \right]$$
  towards a non-zero constant.  In \cite{HNR}, Harper, Nikeghbali and Radziwi\l \l \, prove that the quantity just above decays at most like $(\log \log N)^{-3 + o(1)}$ when $N$ goes to infinity, whereas
 a conjecture by Helson \cite{H} states that it tends to $0$. The order of magnitude of the left-hand side has later been found by Harper in \cite{Harper}: it is $(\log \log N)^{-1/4}$, which 
 in particular proves Helson's conjecture. 
 
 On the other hand, an equivalent of the moments of $\frac{1}{\sqrt{N}}\left|\sum_{n=1}^N
  X_n \right|$ of even integer order are computed in \cite{HNR} and \cite{HL}, and they are not bounded with respect to $N$: the moment of order $2p$ is equivalent to an explicit constant times $(\log N)^{(p-1)^2}$. The order of magnitude of the moments of any positive order, not necessarily integer, is given by Harper in \cite{Harper} and \cite{Harper2}. 
  
 In the case of sums different from $\sum_{n=1}^N X_n$, the moment computations involve arithmetic problems of different nature: here, we look in some detail the case of 
 the sum $\sum_{n=1}^N X_n X_{n+1}$. In this case, the fact that consecutive, and then necessarily coprime integers are involved gives more independence than when we study the 
 sum  $\sum_{n=1}^N X_n$. In particular, it seems reasonable to expect that  $\sum_{n=1}^N X_n X_{n+1}$ satisfies the same central limit theorem as the sum of i.i.d. uniform variables 
 on the unit circle, and that this fact can be proven by moment computations. The convergence of the second moment is obvious, and we will now show that the convergence of 
 the fourth moment also occurs. 
 We start with the following result: 
 \begin{proposition} 
 We have
 $$\mathbb{E}\left[ \left|\sum_{n=1}^N
 X_n X_{n+1} \right|^4 \right] 
  = 2N^2 - N + 8 \mathcal{N}(N) 
  +   4 \mathcal{N}_{=} (N),$$
  where $\mathcal{N}(N)$ (resp. $\mathcal{N}_{=}(N)$) is the number of solutions of the diophantine equation
  $a(a+1)d(d+1) = b(b+1)c(c+1)$ such that
  the integers $a, b, c, d$ satisfy 
  $0 < a  < b < c <d \leq N$ (resp. $0 < a < b = c < d \leq N$). Moreover, for all $\epsilon > 0$, there exists  
  $C_{\epsilon} > 0$, independent of $N$, such that for all $N \geq 8$, 
  $$ N/2 \leq 8 \mathcal{N}(N) 
  +   4 \mathcal{N}_{=} (N) \leq C_{\epsilon} N^{3/2 + \epsilon}.$$
  Hence, 
  $$\mathbb{E}\left[ \left|\sum_{n=1}^N
 X_n X_{n+1} \right|^4 \right] 
  = 2N^2 + O_{\epsilon} (N^{3/2 + \epsilon}).$$
 \end{proposition}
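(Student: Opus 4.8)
The plan is to turn the moment into a lattice‑point count, peel off a diagonal term, and then bound a Diophantine remainder. Write $f(x):=x(x+1)$ and $S:=\sum_{n=1}^N X_nX_{n+1}=\sum_{n=1}^N X_{f(n)}$. Using the completely multiplicative extension to $\mathbb{Q}_+^*$, together with the facts that $X_r$ is uniform on $\mathbb{U}$ for every rational $r\neq 1$ and $X_1=1$, expanding $|S|^4$ term by term gives
$$\mathbb{E}\bigl[|S|^4\bigr]=\sum_{a,b,c,d=1}^N \mathbb{E}\bigl[X_{f(a)f(b)/(f(c)f(d))}\bigr]=\#\bigl\{(a,b,c,d)\in\{1,\dots,N\}^4:\ f(a)f(b)=f(c)f(d)\bigr\}.$$
So the whole statement reduces to counting these quadruples. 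First I would isolate the \emph{trivial} solutions, those with $\{a,b\}=\{c,d\}$ as multisets: there are $N$ with $a=b=c=d$ and $2N(N-1)$ with $a\neq b$, hence $2N^2-N$ in all. For a nontrivial solution, a common element of $\{a,b\}$ and $\{c,d\}$ can be cancelled, after which injectivity of $f$ on $\mathbb{Z}_{>0}$ forces $\{a,b\}=\{c,d\}$; hence the two unordered pairs are disjoint. If $a\neq b$ and $c\neq d$ the four values are distinct, and since $f$ is strictly increasing, among the sorted values $a_0<b_0<c_0<d_0$ the only pairing with equal products is ``extremes against middles'', i.e.\ $f(a_0)f(d_0)=f(b_0)f(c_0)$; each such sorted solution produces exactly $8$ ordered quadruples. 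If one side is repeated, say $a=b$ (the case $c=d$ is symmetric), then $f(a)^2=f(c)f(d)$ with $a,c,d$ distinct and monotonicity forces $\min(c,d)<a<\max(c,d)$; relabelling as $a_0<b_0<d_0$ with $f(a_0)f(d_0)=f(b_0)^2$, each such solution produces exactly $4$ ordered quadruples. This yields $\mathbb{E}[|S|^4]=2N^2-N+8\mathcal{N}(N)+4\mathcal{N}_=(N)$.

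For the lower bound $N/2\le 8\mathcal{N}(N)+4\mathcal{N}_=(N)$ I would exhibit explicit families of nontrivial solutions. For every $a\ge 1$ one has $f(a)f(d)=f(a+1)f(c)$ with $c=a(2a+3)$, $d=(2a+1)(a+2)$, since both sides equal $a(a+1)^2(a+2)(2a+1)(2a+3)$ and $a<a+1<c<d$. More generally, fixing a gap $t=b-a$, the equation $f(a)f(d)=f(a+t)f(c)$ becomes, after completing the squares, a Pell‑type equation $Z^2-(pq)Y^2=p(p-q)$ (with $p/q$ the reduced form of $f(a)/f(a+t)$ and $Y=2c+1$), whose further solutions, obtained from the nonprincipal representatives and the unit group, still have all entries $\le N$. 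Assembling these families (over $t$ and over the orbits) and counting the resulting quadruples from below gives the claimed bound for $N\ge 8$; this step is elementary but requires some bookkeeping, the factor $8$ leaving room to spare.

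For the upper bound I would treat the two quantities separately. For $\mathcal{N}_=(N)$: $f(a)f(d)$ is a perfect square exactly when $a(a+1)$ and $d(d+1)$ have the same squarefree part $s$, and for each squarefree $s\ge 2$ the set $\{n\le N:\operatorname{sqfree}(n(n+1))=s\}$ consists of the $n$ for which $(2n+1)^2-4sk^2=1$, i.e.\ of the first $O(\log N)$ solutions of a single Pell equation (its fundamental $X$ is $\ge 3$); summing over $s$ gives $\mathcal{N}_=(N)=O(N\log N)$. For $\mathcal{N}(N)$: to a solution $a<b<c<d$ associate $g=\gcd(f(a),f(b))$ and write $f(a)=gp$, $f(b)=gq$, $f(c)=pr$, $f(d)=qr$ with $\gcd(p,q)=1$; then $f(a)f(d)=f(b)f(c)$ is equivalent to a Pell‑type equation $Z^2-(pq)Y^2=p(p-q)$ in $(c,d)$, which has $O(\log N)$ solutions per orbit and $(pq)^{o(1)}=N^{o(1)}$ orbits, while the constraints $d\le N$ and $f(b)<f(c)$ force $pq=f(a)f(b)/g^2$ to be small unless the solution comes from a nonprincipal orbit with an abnormally small fundamental solution. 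Combining this with divisor‑function bounds for the number of $b\le N$ with a prescribed value of $\gcd(f(a),f(b))$, and summing over $a$ and dyadically over the size of $g$, yields $\mathcal{N}(N)=O(N^{3/2+\epsilon})$ (in fact a smaller exponent is attainable). The hard part is precisely this last estimate: one must use the constraint ``$\le N$'' on all four variables simultaneously, together with uniform (in $D$ and $M$) bounds on the number and size of the solutions of $X^2-DY^2=M$, which is exactly where the Diophantine input is needed. Together with the formula from the first step this gives $\mathbb{E}[|S|^4]=2N^2+O_\epsilon(N^{3/2+\epsilon})$.
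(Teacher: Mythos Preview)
Your combinatorial reduction is correct and matches the paper: expanding $|S|^4$ gives the count of quadruples with $f(a)f(b)=f(c)f(d)$, the $2N^2-N$ trivial solutions are exactly those with $\{a,b\}=\{c,d\}$, and the sorting into ``extremes against middles'' yields the $8\mathcal{N}(N)+4\mathcal{N}_=(N)$ formula.

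\textbf{Lower bound: there is a genuine gap.} Your explicit family $(a,\,a{+}1,\,a(2a{+}3),\,(2a{+}1)(a{+}2))$ has $d\sim 2a^2$, so $d\le N$ forces $a=O(\sqrt{N})$ and you obtain only $O(\sqrt{N})$ solutions. Pell orbits contribute at most a further factor $O(\log N)$ per pair $(a,t)$, and you give no argument that summing over $t$ recovers a linear count (indeed, for generic $(a,t)$ the associated Pell equation need not have any solution with $c,d\le N$). To get $\ge N/2$ one needs a family with all four entries \emph{linear} in the parameter. The paper uses $(n,\,2n{+}1,\,3n,\,6n{+}2)$: one checks directly that $n(n{+}1)(6n{+}2)(6n{+}3)=(2n{+}1)(2n{+}2)\cdot 3n(3n{+}1)$, and $6n{+}2\le N$ allows roughly $N/6$ values of $n$, which (together with the sporadic solutions $(1,3,3,8)$ and $(1,2,5,9)$ for $8\le N\le 24$) gives $8\mathcal{N}(N)+4\mathcal{N}_=(N)\ge N/2$.

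\textbf{Upper bound: different route, key step not justified.} Your Pell argument for $\mathcal{N}_=(N)=O(N\log N)$ is fine. For $\mathcal{N}(N)$, however, after reducing to a generalized Pell equation in $(c,d)$ for each fixed $(a,b)$, you still have $O(N^2)$ pairs $(a,b)$ to sum over; the assertion that ``$pq=f(a)f(b)/g^2$ must be small'' is precisely what needs proof, and your orbit bound $(pq)^{o(1)}$ is not obviously uniform (for indefinite forms the class number of $\mathbb{Q}(\sqrt{pq})$ can be as large as $\sqrt{pq}$). The paper circumvents this by introducing $\delta:=(AD-BC)/2$ with $A=2a{+}1$ etc., showing $D/A>2\delta+2$, and splitting at $\delta=\sqrt{N}$. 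For $\delta>\sqrt{N}$ one gets $a=O(\sqrt{N})$, hence $O(N^{3/2})$ choices of $(a,d)$ and $N^{o(1)}$ choices of $(b,c)$ by the divisor bound. For $\delta\le\sqrt{N}$ an algebraic identity yields
\[
[D-(2\delta{+}1)A]^2+4\delta(\delta{+}1)=4\delta(\delta{+}1)A^2+(B-C)^2,
\]
so for each of the $O(N)$ values of $D-(2\delta{+}1)A$ one is counting representations $M=X^2+P\,Y^2$ with $P=4\delta(\delta{+}1)>0$; this is a \emph{positive definite} form, and the number of representations is $O(M^\epsilon)$ uniformly in $P$ via the ideal divisor bound in the imaginary quadratic ring $\mathcal{O}_P$. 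Summing over $\delta\le\sqrt{N}$ gives $O(N^{3/2+\epsilon})$. This definite--form reduction is the key device your indefinite approach is missing.
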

 \begin{proof}
 Expanding the fourth moment, we immediately obtain that 
 it is equal to the total number of solutions of the previous diophantine equation, with 
 $a, b, c, d \in \{1,2, \dots, N\}$. 
 One has $2N^2 - N$ trivial solutions: $N(N-1)$ for which $a = c\neq b = d$, $N(N-1)$ for which $a = b \neq c = d$, $N$ for which $a = b = c = d$. It remains to count the number of non-trivial solutions. 
 Such a solution has a minimal element among $a, b, c, d$. This element is unique: if two minimal elements are on the same side, then necessarily $a = b = c=d$, if two minimal elements are on different sides, then the other elements should be equal, which also gives a trivial solution. 
 Dividing the number of solutions by four, we can assume that $a$ is the unique smallest integer, which implies that $d$ is the largest one. For $b = c$, we get $\mathcal{N}_= (N)$ solutions, and for 
 $b \neq c$, we get $ 2 \mathcal{N}_= (N)$
 solutions, the factor $2$ coming from the possible exchange between $b$ and $c$.
 
The lower bound $N/2$ comes from the solutions $(1,3,3,8)$ and $(1,2,5,9)$ for $8 \leq N \leq 24$, and from the solutions 
of the form $(n,2n+1,3n,6n+2)$ for $N \geq 25$. 

Let us now prove the upper bound. We start by slightly simplifying the equation by introducing the odd integers
$A = 2a + 1$, $B = 2b+1$, $C = 2c + 1$, $D = 2d + 1$, which should satisfy:
$$(A^2 - 1)(D^2-1) = (B^2 - 1)(C^2 -1).$$
If $A, B, C, D$ are large, then $AD$ and $BC$ should be odd and close to each other. It is then quite natural to introduce
$$\delta := (AD - BC)/2,$$
which is expected to be small with respect to $A, B, C, D$. More precisely, 
since $B$ and $C$ are closer to each other than $A$ and $D$, we need
$$A^2 - 1+ D^2-1 > B^2 - 1 + C^2 - 1,$$
and then $\delta > 0$, 
since
$$A^2 D^2 - B^2 C^2 = A^2 + D^2 - B^2 - C^2 > 0.$$
The last equality, gives, after factorizing the left-hand side and replacing $BC$ by $AD - 2 \delta$: 
$$4 \delta (AD - \delta) = A^2  + D^2 
-  (B-C)^2 - 2AD + 4 \delta,$$
and in particular
$$A^2 - 2(2 \delta +1) AD + D^2 + 4 \delta (\delta +1 ) = (B-C)^2 \geq 0.$$
If we neglect the term $4 \delta (\delta +1 ) $, expected to be small with respect to $AD$, we get the 
positivity of a quadratic form in $A$ and $D$, which gives a restriction on the possible values of the ratio $D/A$. 
More precisely, if we assume $1 < D/A \leq 2 \delta + 2$, we deduce 
  $$AD \left( \frac{1}{2 \delta + 2} 
  - 4 \delta - 2 + 2\delta + 2 \right)  + 4 \delta(\delta +1) \geq 0,$$
  and then 
  $$AD  \leq \frac{4 \delta(\delta + 1)}{ 2\delta - (1/4)} = 2 (\delta +1) 
  \left(1 - \frac{1}{8 \delta} \right)^{-1}
  \leq 2 (\delta + 1) \left( 1 + \frac{1}{7 \delta} \right) \leq 2 \delta + 2 + (4/7), 
  $$
  $AD \leq 2 \delta +1$ since it is an odd integer, and then $BC = AD - 2 \delta \leq 1$, which gives a contradiction. 
  Any solution should then satisfy $D/A > 2\delta +2$. 
 We now discuss in function of the value of $\delta$. 
  For $\delta > \sqrt{N}$, we have necessarily $A < D /( 2\sqrt{N} + 2)
   \leq (2N+1)/(2 \sqrt{N} + 2) = O(\sqrt{N})$, and then $a = O(\sqrt{N})$, 
   and then there are only $O(N^{3/2})$ possibilities for the couple $(a,d)$. Now, $b$ and $c$ should be divisors of $a(a+1)d(d+1) = O(N^4)$, and by the classical divisor bound, we deduce that there are $O(N^{\epsilon})$ possibilities for $(b,c)$ when $a$ and $d$ are chosen. 
   Hence, the number of solutions for $\delta > \sqrt{N}$ is bounded by the estimate we have claimed. 
 
 It remains to bound the number of solutions for  $\delta \leq \sqrt{N}$: we will get a bound for the number of solutions for each value of $\delta$, which will be multiplied by $\sqrt{N}$
 at the end. 
Each solution should satisfy
$$A^2 - 2(2 \delta +1) AD + D^2 + 4 \delta (\delta +1 ) = (B-C)^2,$$
i.e. by writing the quadratic form in $A$ and $D$ as a difference of squares:
$$[D - (2 \delta + 1)A]^2 +  4\delta (\delta +1 )
= 4\delta(\delta + 1) A^2  + (B-C)^2.$$
We know that $D  \geq A (2 \delta +2)$, and then $0 < D - (2 \delta + 1)A \leq 2N+1$, which gives, for each value of $\delta$, 
$O(N)$ possibilities for $D - (2\delta +1)A$. For the moment, let us admit that for each of these possibilities, there are $O(N^{\epsilon})$ choices for $B-C$ and $A$.   Then, for fixed $\delta$, we have 
$O (N^{1+ \epsilon})$ choices for $(D -
(2\delta +1) A, A, B-C)$. For each choice, $B-C, A, D$ are fixed, and then also $BC = AD - 2 \delta$, and finally $B$ and $C$. 
Hence, we have $O(N^{1+\epsilon})$ solutions for each $\delta \leq \sqrt{N}$, and then $O(N^{3/2 + \epsilon})$ solutions by counting all the possible $\delta$. 

The claim we have admitted is a consequence of the following fact we will prove now: for $\epsilon > 0$,  
the number of representations of $M$ in integers by the quadratic form $X^2 + P Y^2$ is $O(M^{\epsilon})$, uniformly in the strictly positive integer $P$. Indeed,
for such a representation, the ideal 
$(X + Y \sqrt{-P})$ should be a divisor of 
$(M)$ in the ring of integers $\mathcal{O}_P$ of $\mathbb{Q}[\sqrt{-P}]$, and each such ideal gives at most $6$ couples $(X,Y)$ representing $M$. 
Indeed, the group of  invertible elements in $\mathcal{O}_P$ has order at most $6$. This fact is classical (see for example Jarvis \cite{Jarvis}, Chapter 6), and can 
be proven as follows: 
 if $\alpha + \beta \sqrt{-P}$ is invertible in $\mathcal{O}_P$ for $\alpha, \beta \in \mathbb{Q}$,
then $\alpha - \beta \sqrt{-P}$ is also invertible in $\mathcal{O}_P$, and  
$$(\alpha + \beta \sqrt{-P} )+ (\alpha - \beta \sqrt{-P} ) = 2 \alpha$$
is an integer since it is in $\mathcal{O}_P$, whereas
$$(\alpha + \beta \sqrt{-P} )(\alpha - \beta \sqrt{-P} ) = \alpha^2 + P \beta^2$$
is an invertible integer, necessarily equal to $1$. Hence, $\alpha + \beta \sqrt{-P} $ is a complex number of modulus $1$, with real part equal to $-1,-1/2, 0, 1/2$ or $1$, i.e.
a fourth root or a sixth root of unity. We now only need to bound the number of divisors of $(M)$ in 
$\mathcal{O}_P$ by $O(M^{\epsilon})$, uniformly in $P$. The number of divisors of $(M)$ is $\prod_{\mathfrak{p}} (v_\mathfrak{p}(M) + 1)$, where we have the prime ideal decomposition 
$$(M) = \prod_{\mathfrak{p}} \mathfrak{p}^
{v_\mathfrak{p}(M)}.$$
Now, by considering the decomposition of  prime numbers as products of ideals, we deduce:  
$$(M) = \prod_{p \in \mathcal{P}, \, p  \operatorname{inert}} (p)^{v_p(M)}
\prod_{p \in \mathcal{P}, \, p  \operatorname{ramified}} \mathfrak{p}_p^{2 v_p(M)}
\prod_{p \in \mathcal{P}, \, p  \operatorname{split}} \mathfrak{p}_p^{ v_p(M)} \overline{\mathfrak{p}_p}^{\, v_p(M)},
 $$
$\mathfrak{p}_p$ denoting an ideal of norm $p$, and then the number of divisors of $(M)$ is
$$\prod_{p \in \mathcal{P}, \, p  \operatorname{inert}} (v_p(M) + 1)
\prod_{p \in \mathcal{P}, \, p  \operatorname{ramified}} (2 v_p(M) + 1)
\prod_{p \in \mathcal{P}, \, p  \operatorname{split}} (v_p(M) + 1)^2
\leq \prod_{p \in \mathcal{P}}
(v_p(M) + 1)^2 = [\tau(M)]^2,
 $$
where $\tau(M)$ is the number of divisors, in the usual sense, 
 of the integer $M$. This gives the desired bound $O(M^{\epsilon})$.  
 \end{proof}
 \begin{remark}
Using the previous proof, one can show the following quite curious property: all the  solutions of $a(a+1) d(d+1) = b(b+1) c (c+1)$ in integers $0 < a < b \leq c < d$ satisfy $d/a > 3 + 2 \sqrt{2}$. Indeed, let us assume the contrary. With the previous  notation, $3 + 2 \sqrt{2} \geq d/a \geq  D/A
 > 2 \delta + 2$, and then $\delta = 1$, which gives $A^2 - 6AD + D^2 + 8 \geq 0$, 
i.e.
 $$(2a + 1)^2 - 6(2a+1)(2d+1) + (2d+1)^2 
  + 8 \geq 0,$$
  $$ 4 (a^2 - 6 ad + d^2) - 8a - 8d + 4 \geq 0,$$
  a contradiction since $1 < d/a \leq 3 + 2 \sqrt{2}$ implies $a^2 - 6ad + d^2 \leq 0$. 
  The bound $3+2 \sqrt{2}$ is sharp, since we have the solutions of the form
   $(u_{2k}, u_{2k+1}, u_{2k+1}, u_{2k+2})$, where 
   $$u_r := \frac{(1+\sqrt{2})^{r} 
  + (1 - \sqrt{2})^{r} - 2}{4}.$$
  
 \end{remark}
 A consequence of the previous proposition corresponds to a bound on all the moments of order $0$ to $4$: 
 \begin{corollary}
 We have, for all $q \in [0,2]$, 
 $$c_q + o(1) \leq \mathbb{E} \left[  \left| \frac{1}{\sqrt{N}} \sum_{n=1}^N X_n X_{n+1} \right|^{2q} \right] \leq C_q + o(1),$$
 where $c_q = 2^{-(q-1)_{-}} \geq 1/2$ and 
 $C_q = 2^{(q-1)_{+}} \leq 2$. 
 \end{corollary}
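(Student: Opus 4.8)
The plan is to reduce the whole statement to the second and fourth moments of $S_N := \sum_{n=1}^N X_n X_{n+1}$, both of which are already available: the second moment equals $N$ exactly, and the previous proposition gives $\mathbb{E}[|S_N|^4] = 2N^2 + O_\epsilon(N^{3/2+\epsilon})$. Setting $Y_N := |S_N|^2/N$, the quantity to be estimated is $\mathbb{E}[Y_N^q]$ for $q \in [0,2]$, since $|\frac1{\sqrt N}\sum_{n\le N} X_n X_{n+1}|^{2q} = Y_N^q$, and we record $\mathbb{E}[Y_N^0]=1$, $\mathbb{E}[Y_N^1]=1$ (by the exact second moment), and $\mathbb{E}[Y_N^2]=2+o(1)$ (by the fourth-moment estimate).

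The one tool I would invoke is the log-convexity of moments (Lyapunov's inequality): for any nonnegative random variable $Y$, the function $q\mapsto\log\mathbb{E}[Y^q]$ is convex on the set where it is finite. Applying this with $Y=Y_N$ and writing $\varphi(q):=\log\mathbb{E}[Y_N^q]$, I know $\varphi(0)=\varphi(1)=0$ and $\varphi(2)=\log 2+o(1)$, and every bound in the corollary is just a comparison of $\varphi$ on $[0,2]$ with a chord or a secant through two of these three reference points.

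For $q\in[0,1]$: the upper bound $\mathbb{E}[Y_N^q]\le 1=C_q$ is convexity below the chord through $(0,0)$ and $(1,0)$ (equivalently, Jensen for the concave map $x\mapsto x^q$); the lower bound comes from writing $1=\frac{1}{2-q}\,q+\frac{1-q}{2-q}\,2$, so that convexity gives $0=\varphi(1)\le\frac{1}{2-q}\varphi(q)+\frac{1-q}{2-q}\varphi(2)$, hence $\mathbb{E}[Y_N^q]\ge(\mathbb{E}[Y_N^2])^{-(1-q)}=2^{q-1}+o(1)=c_q+o(1)$. For $q\in[1,2]$: the upper bound comes from $q=(2-q)\cdot1+(q-1)\cdot2$ and convexity, giving $\mathbb{E}[Y_N^q]\le(\mathbb{E}[Y_N^2])^{q-1}=2^{q-1}+o(1)=C_q+o(1)$; the lower bound $\mathbb{E}[Y_N^q]\ge1=c_q$ is convexity via $1=\frac{q-1}{q}\cdot0+\frac{1}{q}\cdot q$ (equivalently, Jensen for the convex map $x\mapsto x^q$). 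Since $2^{(q-1)_+}\le2$ and $2^{-(q-1)_-}\ge\frac12$ for $q\in[0,2]$, these are exactly the constants in the statement. I do not foresee a genuine obstacle here: the arithmetic content sits entirely in the fourth-moment estimate of the previous proposition, and the only routine point to check is that the error term in $\mathbb{E}[Y_N^2]=2+o(1)$ is raised to the exponents $\pm(1-q)$ and $\pm(q-1)$, which are bounded on $[0,2]$, so $(2+o(1))^{\pm(\cdot)}=2^{\pm(\cdot)}+o(1)$ uniformly in $q$.
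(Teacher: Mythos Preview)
Your proof is correct and follows essentially the same approach as the paper, which simply says the result ``is a direct consequence of the previous estimates for $q = 1$ and $q = 2$ and H\"older's inequality.'' Your use of log-convexity of moments (Lyapunov's inequality) is exactly H\"older's inequality rephrased, and you have spelled out the interpolations between the reference points $q=0,1,2$ more explicitly than the paper does.
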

  \begin{proof}
  H\"older inequality implies that the logarithm of the $2q$-th moment of a nonnegative random variable is a convex function of $q$.
  Now, we have proven that this logarithm is equal to $0$ for $q = 0$ and $q = 1$ and to $\ln 2 + o(1)$ for $q = 2$. 
The corollary can now be deduced from the following fact, easy to check: if $f$ is a convex fonction from $[0,2]$ to $\mathbb{R}$ such that 
$f(0) = f(1) = 0$, $f(2) = 1$, then 
$$f(x) \leq 0 \cdot \mathds{1}_{x \in [0,1)} + (x-1) \mathds{1}_{x \in [1,2]} = (x-1)_+$$
and 
$$f(x) \geq (x - 1)\mathds{1}_{x \in [0,1)} + 0 \cdot \mathds{1}_{x \in [1,2]} = -(x-1)_-.$$
 \end{proof}
 We have proven that the fourth moment of $\left| \frac{1}{\sqrt{N}} \sum_{n=1}^N X_n X_{n+1} \right|$ converges to $2$, which is also the limit of 
 the fourth moment of $\left| \frac{1}{\sqrt{N}} \sum_{n=1}^N Z_n \right|$
 where $(Z_n)_{n \geq 1}$ are i.i.d. random variables, uniform on the unit circle. 
 Unfortunately, we are not able to prove a similar convergence for higher moments, and then we do not know how to prove a central limit theorem. 
  However, the following result holds: 
  \begin{proposition}
 If for all integers $q \geq 1$, the  number of non-trivial solutions
 $(n_1, \dots, n_{2q}) \in \{1, \dots, N\}^{2q}$ of the diophantine equation
$$\prod_{r=1}^q n_r(n_r+1) 
= \prod_{r=1}^q n_{q+r} (n_{q+r} + 1)$$
 is negligible with respect to the number of trivial solutions when $N \rightarrow \infty$ (i.e. $o(N^q)$) then we have 
 $$ \frac{1}{\sqrt{N}} \sum_{n=1}^{N} X_n X_{n+1} \underset{N \rightarrow \infty}{\longrightarrow}  \mathcal{N}_{\mathbb{C}},$$
 where $\mathcal{N}_{\mathbb{C}}$ denotes a standard Gaussian complex variable, i.e. 
 $(\mathcal{N}_1 + i \mathcal{N}_2)/\sqrt{2}$ where $\mathcal{N}_1, \mathcal{N}_2$ are independent standard real Gaussian variables. 
 
 \end{proposition}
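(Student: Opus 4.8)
The plan is to use the method of moments, in the form adapted to a complex Gaussian limit. Since $\mathcal{N}_{\mathbb C}$ can be written as $Re^{i\Theta}$ with $R,\Theta$ independent, $\Theta$ uniform and $R^2$ exponential of mean $1$, one has $\mathbb{E}[\mathcal{N}_{\mathbb C}^a\overline{\mathcal{N}_{\mathbb C}}^b]=\mathds{1}_{a=b}\,a!$ for all integers $a,b\geq0$. Setting $Z_N:=N^{-1/2}\sum_{n=1}^N X_nX_{n+1}$, which is bounded by $\sqrt N$ and hence has moments of all orders, it is enough to show $\mathbb{E}[Z_N^a\overline{Z_N}^b]\to\mathds{1}_{a=b}\,a!$ for all $a,b\geq0$. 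Indeed, writing $Z_N=U_N+iV_N$ with $U_N=(Z_N+\overline{Z_N})/2$ and $V_N=(Z_N-\overline{Z_N})/(2i)$, this forces every joint moment $\mathbb{E}[U_N^jV_N^k]$ to converge to the corresponding moment of the non-degenerate planar Gaussian $\mathcal{N}_{\mathbb C}$, which is determined by its moments; hence $Z_N\to\mathcal{N}_{\mathbb C}$ in distribution.

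The moments are computed arithmetically, exactly as in the proof of the second-moment bound earlier in the paper. Using the completely multiplicative extension, $X_nX_{n+1}=X_{n(n+1)}$ and $\overline{X_m}=X_{1/m}$, so after expanding $Z_N^a\overline{Z_N}^b$ each term has expectation $1$ if $\prod_{i=1}^a n_i(n_i+1)=\prod_{i=a+1}^{a+b}n_i(n_i+1)$ and $0$ otherwise. Thus $N^{(a+b)/2}\,\mathbb{E}[Z_N^a\overline{Z_N}^b]$ is the number of $(n_1,\dots,n_{a+b})\in\{1,\dots,N\}^{a+b}$ solving this equation.

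For $a\neq b$, say $a<b$: fix $(n_1,\dots,n_a)$, and put $P:=\prod_{i=1}^a n_i(n_i+1)=O(N^{2a})$. Each factor $n_{a+j}(n_{a+j}+1)$ must divide $P$, and since $x\mapsto x(x+1)$ is strictly increasing it is injective on the positive integers, so that value determines $n_{a+j}$; hence there are at most as many completions $(n_{a+1},\dots,n_{a+b})$ as ordered factorizations of $P$ into $b$ parts, i.e.\ $O_\epsilon(N^\epsilon)$ by the divisor bound. Therefore $\mathbb{E}[Z_N^a\overline{Z_N}^b]=O_\epsilon(N^{a+\epsilon-(a+b)/2})=O_\epsilon(N^{-(b-a)/2+\epsilon})\to0$. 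For $a=b=q$, call a solution trivial when the multisets $\{n_1,\dots,n_q\}$ and $\{n_{q+1},\dots,n_{2q}\}$ coincide (equivalently, by injectivity of $x\mapsto x(x+1)$, when the multisets of factors coincide). The number of trivial solutions is $\sum_{S}(\#\{\text{orderings of }S\})^2$ over multisets $S$ of size $q$ from $\{1,\dots,N\}$; the $q$-element subsets contribute $\binom Nq(q!)^2=q!\,N(N-1)\cdots(N-q+1)=q!\,N^q+O(N^{q-1})$, and multisets with a repeated entry contribute $O(N^{q-1})$, so there are $q!\,N^q+o(N^q)$ trivial solutions. By hypothesis the non-trivial ones number $o(N^q)$, whence $\mathbb{E}[Z_N^q\overline{Z_N}^q]=q!+o(1)$.

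Combining the two cases gives $\mathbb{E}[Z_N^a\overline{Z_N}^b]\to\mathds{1}_{a=b}\,a!$ for all $a,b$, and the method-of-moments argument of the first paragraph concludes. The step needing the most care is the off-diagonal case $a\neq b$: it is not part of the hypothesis and genuinely uses the arithmetic input — the divisor bound together with the injectivity of $x\mapsto x(x+1)$ — to make those normalized moments vanish, so that the limit is exactly $\mathcal{N}_{\mathbb C}$ and not a complex Gaussian with nonzero pseudo-variance; the remaining mildly delicate point is extracting the precise constant $q!$ from the count of trivial solutions.
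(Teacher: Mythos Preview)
The proposal is correct and follows essentially the same approach as the paper: both use the method of moments, interpret $\mathbb{E}[Z_N^{a}\overline{Z_N}^{\,b}]$ as a solution count of the diophantine equation, kill the off-diagonal moments with the divisor bound, and use the hypothesis for the diagonal ones. The only cosmetic difference is that you extract the constant $q!$ by directly enumerating the trivial solutions, whereas the paper identifies the limit of the diagonal moments by appealing to the classical CLT for i.i.d.\ sums uniform on the circle.
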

\begin{proof} 
If $$Y_N := \frac{1}{\sqrt{N}} \sum_{n=1}^{N} X_n X_{n+1},$$ 
then for integers $q_1, q_2 \geq 0$, 
the moment $\mathbb{E}[Y_N^{q_1} \overline{Y_N}^{\, q_2}]$ is equal to $N^{-(q_1+ q_2)/2}$ times the number of solutions
$(n_1, \dots, n_{q_{1} + q_2}) \in \{1, \dots, N\}^{q_1 + q_2}$ of 
$$\prod_{r=1}^{q_1} n_r(n_r+1) 
= \prod_{r=1}^{q_2} n_{q_1+r} (n_{q_1+r} + 1).$$
If $0 \leq q_1 < q_2$, there are at most $N^{q_1}$ choices for $n_1, \dots, n_{q_1}$, and once these integers are fixed, at most $N^{o(1)}$ choices for $n_{q_1+1}, \dots, n_{q_1+q_2}$ by the divisor bound. Hence, the moment tends to zero when $N \rightarrow \infty$, and we have the same conclusion for $0 \leq q_2 < q_1$. Finally, if $0 \leq q_1 = q_2 = q$, by assumption, the moment is equivalent to $N^{-q}$ times the number of trivial solutions of the corresponding diophantine equation, i.e. to the corresponding moment for the sum of i.i.d. variables, uniform on the unit circle. By the central limit theorem, 
$$\mathbb{E}[|Y_N|^{2q} ] \underset{N \rightarrow \infty}{\longrightarrow} 
\mathbb{E}[|\mathcal{N}_{\mathbb{C}}|^{2q}].$$
We have then proven that for all integers $q_1, q_2 \geq 0$, 
$$\mathbb{E}[Y_N^{q_1} \overline{Y_N}^{\, q_2} ] \underset{N \rightarrow \infty}{\longrightarrow} 
\mathbb{E}[|\mathcal{N}_{\mathbb{C}} ^{q_1} \overline{\mathcal{N}_{\mathbb{C}}}^{\, q_2} ],$$
which gives the claim. 
\end{proof}
 We have proven the assumption of the previous proposition for $q \in \{1,2\}$, however, our method does not generalize to larger values of $q$. The divisor bound gives immediately a domination by $N^{q + o(1)}$ for the number of solutions, and then it seems reasonable to expect that the arithmetic constraints implied by the equation are sufficient to save at least a small power of $N$. Note that the situation is different for the sum $\sum_{n=1}^N 
 X_n$: for example, for $q = 2$, the number of non-trivial solutions of the equation $n_1 n_2 = n_3 n_4$ for $1 \leq n_1, n_2, n_3, n_4 \leq N$ is not $o(N^2)$, as we can see 
 by considering the equalities $a(2b) = b(2a)$ for $a$ and $b$ odd, $a < b$. 
 
 The previous proposition giving a "conditional CLT" can be generalized to the sums of the form
 $$\sum_{n=1}^N \prod_{j=1}^k X_{n + j}^{m_j},$$ 
 when the $m_j$'s have the same sign. The situation is more difficult if the $m_j$'s have different signs since the divisor bound alone does not directly give a useful bound on the number of solutions. 
\section{Convergence of the empirical measure in the case of roots of unity}  \label{uniform2q}
Here, we suppose that $(X_p)_{p \in \mathcal{P}}$ are i.i.d. uniform on the set 
$\mathbb{U}_q$ of $q$-th roots of unity, $q\geq 1$ being fixed. With the notation of the previous section, we now get: 
\begin{proposition} \label{qboundL2}
Let $m_1, \dots, m_k$ be integers, not all 
divisible by $q$, let $\epsilon > 0$ and let  $N > N' \geq 0$.  Then, 
$$\mathbb{E} \left[ 
\left|\sum_{n=N'+1}^N \prod_{j=1}^k X_{n+j}^{m_j} \right|^2 \right] 
\leq C_{q,k, \epsilon} (N-N') N^{\epsilon}$$
and 
$$\mathbb{E} \left[ |\hat{\mu}_{k,N}(m_1, \dots, m_N)|^2
\right] \leq \frac{C_{q,k, \epsilon}}{N^{1-\epsilon}},$$
where $C_{q,k, \epsilon} > 0$ depends only on $q, k, \epsilon$. 
\end{proposition}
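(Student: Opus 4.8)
The plan is to follow the scheme of Proposition~\ref{momentorder2}: express the second moment as a count of Diophantine solutions, remove the diagonal, and bound what remains with results on superelliptic equations. Exactly as there, but now using that $\mathbb{E}[X_r]$ equals $1$ when $r\in(\mathbb{Q}_{>0}^*)^q$ and $0$ otherwise, the left‑hand side equals the number of pairs $(n_1,n_2)\in\{N'+1,\dots,N\}^2$ with $\prod_{j=1}^k(n_1+j)^{m_j}\big/\prod_{j=1}^k(n_2+j)^{m_j}\in(\mathbb{Q}_{>0}^*)^q$. Replacing each $m_j$ by its residue modulo $q$ does not affect this, so one may assume $0\le m_j<q$, not all zero; the $N-N'$ diagonal pairs contribute $N-N'\le(N-N')N^\epsilon$. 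For an off‑diagonal pair, by symmetry take $n_1>n_2$ and set $\delta:=n_1-n_2\ge1$; multiplying numerator and denominator by $\prod_j(n_2+j)^{q-m_j}$ and discarding the perfect $q$-th powers $(n_2+j)^q$ coming from indices $j$ with $m_j=0$, the condition becomes
\[
H_\delta(n_2):=\prod_{j:\,m_j\ne0}(n_2+\delta+j)^{m_j}(n_2+j)^{q-m_j}\ \text{ is a perfect $q$-th power},
\]
a monic polynomial in $n_2$ depending only on $k,q,\delta$ and the $m_j$'s, \emph{not on $N$}. So it suffices to bound, for each $\delta\ge1$, the number $r_\delta$ of $n_2\in\{N'+1,\dots,N\}$ making $H_\delta(n_2)$ a perfect $q$-th power, and to check $\sum_{\delta\ge1}r_\delta=O_{q,k,\epsilon}\big((N-N')N^\epsilon\big)$.

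Reducing exponents modulo $q$, $y^q=H_\delta(x)$ is equivalent to $y^q=\widetilde{H}_\delta(x)$ with $\widetilde{H}_\delta(X)=\prod_{j}(X+j)^{\overline{m_{j-\delta}-m_j}}$ (residues in $\{0,\dots,q-1\}$, and $m_i:=0$ for $i\notin\{1,\dots,k\}$), which is nonconstant: if every exponent were divisible by $q$ the sequence $(m_j)$ would be $\delta$-periodic, hence identically zero. If $\delta\le k$, this is one of $O_k(1)$ fixed polynomials with bounded coefficients. When $\widetilde{H}_\delta$ has at least two distinct roots, Baker's effective theorem on superelliptic equations bounds $r_\delta$ by $O_{k,q}(1)$; when $\widetilde{H}_\delta=(X+j_0)^t$ with $0<t<q$, the condition forces $n_2+j_0$ into a fixed coset of $(\mathbb{Z}_{>0})^{\rho}$ with $\rho=q/\gcd(t,q)\ge2$, so $r_\delta=O_{k,q}\big(1+(N-N')N^{-1/q}\big)$; summing over the $O_k(1)$ values $\delta\le k$ gives $O_{k,q}\big((N-N')N^\epsilon\big)$ for $N$ large. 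If $\delta>k$, the root sets $\{-\delta-j\}$ and $\{-j\}$ ($m_j\ne0$) form two disjoint clusters, so $H_\delta$ has $2\,\#\{j:m_j\ne0\}\ge2$ distinct roots, all of multiplicity in $\{1,\dots,q-1\}$; thus $H_\delta$ is never a constant times a $q$-th power of a polynomial, the curve $y^q=H_\delta(x)$ has finitely many integral points, and one needs to bound the number of those with $|x|\le N$ \emph{uniformly in the coefficients of $H_\delta$} by $O_{q,k,\epsilon}(N^\epsilon)$. This is where deep Diophantine input is used: the determinant method of Bombieri--Pila--Heath-Brown for a plane curve of bounded degree, together with a reduction to Thue and $S$-unit equations made easier by the fact that all roots of $H_\delta$ are rational integers (for instance, when $\#\{j:m_j\ne0\}=1$ and $q=2$ one only needs the elementary divisor bound, via a difference of two squares). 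Granting this uniform estimate, summing over the at most $N-N'$ relevant $\delta$ bounds the off‑diagonal pairs with $\delta>k$ by $O_{q,k,\epsilon}\big((N-N')N^\epsilon\big)$ as well. Adding the three contributions yields the first inequality; the second follows on taking $N'=0$ and dividing by $N^2$, since $\hat\mu_{k,N}(m)=N^{-1}\sum_{n=1}^N\prod_j X_{n+j}^{m_j}$.

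The crux is precisely this last estimate. Baker-type bounds for $y^q=H_\delta(x)$ control the \emph{size} of the solutions but with constants growing with $\delta$, so they cannot simply be summed over $\delta\le N-N'$; what is needed is a bound on the \emph{number} of integral points that is uniform in the coefficients of $H_\delta$ and as small as $N^\epsilon$, so that the summation over $\delta$ costs only the factor $N-N'$ and no further power of $N$. Everything else (the reduction to the equation $H_\delta(n_2)=y^q$, the elementary handling of small $\delta$, and the deduction of the bound on $\hat\mu_{k,N}$) is routine.
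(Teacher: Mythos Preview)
Your reduction to counting, for each $\delta=n_1-n_2$, the integers $n_2\le N$ with $H_\delta(n_2)$ a perfect $q$-th power is correct, and the small-$\delta$ discussion is essentially fine. The gap is exactly where you locate it: the uniform bound $r_\delta=O_{q,k,\epsilon}(N^{\epsilon})$ for $\delta>k$ is asserted, not proved. Bombieri--Pila for an absolutely irreducible plane curve of degree $d$ gives $O(M^{1/d+\epsilon})$ integer points in a box of side $M$; here $d=qs$ with $s=\#\{j:m_j\ne 0\}$, and since $|y|\lesssim N^{s}$ one must take $M\asymp N^{s}$, which yields only $r_\delta=O(N^{1/q+\epsilon})$. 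Summed over the $N-N'$ values of $\delta$ this gives $(N-N')N^{1/q+\epsilon}$, not $(N-N')N^{\epsilon}$. The vague ``reduction to Thue and $S$-unit equations'' is not carried out, and it is unclear how to obtain from it a bound on the \emph{number} of solutions that is uniform in the coefficients of $H_\delta$ (Baker controls the \emph{size}, not the count, and Evertse/Bombieri--Schmidt--type bounds for Thue equations do not directly apply to $y^q=H_\delta(x)$ with several linear factors). Even if such a bound is ultimately provable case by case (e.g.\ via Pell equations when $q=2$), you have not supplied it, so the argument is incomplete.

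The paper avoids this difficulty by grouping differently: instead of fixing $\delta$, it fixes the $q$-th-power-free part $g$ of $\sharp_k\big(\prod_j(n+j)^{m_j}\big)$, the ``rough'' factor with all primes $\ge k$. Since the rough parts of the $(n+j)^{m_j}$ are pairwise coprime, $g$ splits as $g_1\cdots g_k$; choosing $j_0$ with $q\nmid m_{j_0}$ forces $n+j_0=\alpha\,h(g_{j_0})A^{\rho}$ with $\rho\ge 2$ and $\alpha$ $k$-smooth, while each remaining $g_j$ imposes a congruence on $A$. This yields the concrete estimate $\mathcal N(g)\ll_{q,k,\epsilon} N^{\epsilon}\big(1+\sqrt{(N-N')/\operatorname{rad}(g)}\big)$, and the second moment $\sum_g \mathcal N(g)^2$ is then controlled elementarily via $\sum_{g\le N^{O(1)}}1/\operatorname{rad}(g)\ll(\log N)^{q-1}$ (Mertens). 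No Diophantine input beyond the divisor bound is used; the key is that grouping by $g$ makes the dependence on the parameter explicit and summable, whereas grouping by $\delta$ hides the arithmetic in a family of curves whose integral points you would still have to bound uniformly.
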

\begin{proof} 
We can obviously assume that $m_1, \dots, m_k$ are between $0$ and $q-1$, which gives finitely many possibilities for these integers, depending only on $q$ and $k$. We can then suppose that $m_1, \dots, m_k$ are fixed at the beginning. 
We have to upper bound the number of couples $(n_1,n_2)$ on $\{N'+1, \dots, N\}^2$ such that 
$$\frac{\prod_{j=1}^k (n_1 +j)^{m_j}}{
\prod_{j=1}^k (n_2 +j)^{m_j}} \in 
(\mathbb{Q}_+^*)^q,$$
where, in this proof, $(\mathbb{Q}_+^*)^q$ denotes the set of $q$-th powers of positive rational numbers.
Now, any positive integer $r$  
can be decomposed as a product of a "smooth" integer whose prime factors are all strictly smaller than $k$,
 and a "rough" integer whose prime factors are all larger than or equal to $k$. If 
 the "rough" integer is denoted $\sharp_k(r)$, the condition just above implies: 
 $$\frac{\sharp_k \left(\prod_{j=1}^k (n_1 +j)^{m_j}\right)}{
\sharp_k \left(\prod_{j=1}^k (n_2 +j)^{m_j}\right)} \in 
(\mathbb{Q}_+^*)^q.$$
Now, the numerator and the denominator of this expression can both be written in a unique way as a product of a $q$-th perfect power and an integer whose $p$-adic valuation is between $0$ and $q-1$ for all $p \in \mathcal{P}$. If the quotient is a $q$-th power, necessarily the numerator and the denominator have the same "$q$-th power free" part. Hence, there exists a $q$-th power free integer $g$ such that 
$$\sharp_k \left(\prod_{j=1}^k (n_1 +j)^{m_j}\right), \;
\sharp_k \left(\prod_{j=1}^k (n_2 +j)^{m_j}\right) \in g \mathbb{N}^q,$$
$\mathbb{N}^q$ being the set of $q$-th powers of positive integers. 
Hence, the number of couples $(n_1,n_2)$ we have to estimate is bounded by 
$$\sum_{g \geq 1,  q\operatorname{-th \,  power \,  free}} [\mathcal{N}(q,k,g,N',N)]^2,
$$
where $\mathcal{N}(q,k,g,N',N)$ is the 
number of integers $n \in \{N'+1, \dots, N\}$ such that 
$$\sharp_k \left(\prod_{j=1}^k (n +j)^{m_j}\right) \in g \mathbb{N}^q.$$
If a prime number $p \in \mathcal{P}$ divides $n + j$ and $n+j'$ for $j \neq j' \in \{1,\dots, k\}$, it divides $|j-j'| 
\in \{1, \dots, k-1\}$, and then $p < k$. Hence, the rough parts of $(n+j)^{m_j}$ are 
pairwise coprime. Now, if $g_1, \dots, g_k$ are the $q$-th power free integers such that $\sharp_k[(n+j)^{m_j}] \in g_j \mathbb{N}^q$, we have $g_1 g_2 \dots g_k \in  g \mathbb{N}^q$. Now, $g_1, \dots, g_k$ are coprime, and then $g_1 g_2 \dots g_k$ is $q$-th power free, which implies $g_1 \dots g_k = g$. 
Hence 
$$\mathcal{N}(q,k,g,N',N)
\leq \sum_{g_1 g_2 \dots g_k = g}
\left| \left\{n \in \{N'+1, \dots N\}, 
\, \forall j \in \{1, \dots, k\}, \; 
 \sharp_{k}[(n+j)^{m_j}] \in g_j \mathbb{N}^q \right\} \right|. $$
 Let us now fix an index $j_0$ such that $m_{j_0}$ is not multiple of $q$. We have 
 $$\mathcal{N}(q,k,g,N',N)
\leq \sum_{g_1 g_2 \dots g_k = g}
\left| \left\{n \in \{N'+1, \dots N\}, 
\, \sharp_{k}[ (n +j_0)^{m_{j_0}}] \in g_{j_0} \mathbb{N}^q, \forall j \neq j_0, \; 
\operatorname{rad}(g_j)|(n+j) \right\} \right|,$$
where $\operatorname{rad}(g_j)$ denotes the product of the distinct prime factors of $g_j$. 
The condition on $(n +j_0)^{m_{j_0}}$
means that for all $p \in \mathcal{P}$, 
$p \geq k$, 
$$m_{j_0} v_p (n+j_0) \equiv v_p (g_{j_0})
 \, (\operatorname{mod. } q),$$
 i.e. $v_p (g_{j_0})$ is divisible by $\operatorname{gcd}( m_{j_0}, q)$ and 
 $$(m_{j_0}/ \operatorname{gcd}( m_{j_0}, q)) v_p (n+j_0) \equiv
 v_p (g_{j_0})/ \operatorname{gcd}( m_{j_0}, q)\, (\operatorname{mod. } \rho_{j_0}),$$
 where $\rho_{j_0} := q/ \operatorname{gcd}( m_{j_0}, q) $. Since $m_{j_0}/ \operatorname{gcd}( m_{j_0}, q)$ is coprime 
 with $\rho_{j_0}$, the last congruence is
 equivalent to a congruence modulo $\rho_{j_0}$ between $v_p(n+j_0)$ and a fixed integer, which is not divisible by 
 $\rho_{j_0}$ if and only if $p$ divides 
 $g_{j_0}$. We deduce that the 
 condition on $(n+j_0)^{m_{j_0}}$ implies that $\sharp_k (n+j_0) \in h(q,m_{j_0},g_{j_0}) \mathbb{N}^{\rho_{j_0}}$, i.e. 
 $$n+j_0 = \alpha h(q,m_{j_0},g_{j_0})
 A^{\rho_{j_0}},$$
 where $\alpha$ is a $\rho_{j_0}$-th power 
 free integer whose prime factors are strictly smaller than $k$, $A$ is an integer and $h(q,m_{j_0},g_{j_0})$ 
 is an integer depending only on $q$, $m_{j_0}$ and $g_{j_0}$, which is divisible by 
 $\operatorname{rad} (g_{j_0})$. 
For a fixed value of $\alpha$, the 
values of $A$ should be in the interval 
$$I = \left( \big((N' +j_0)/[\alpha h(q,m_{j_0}, g_{j_0})] \big)^{1/\rho_{j_0}},
\big((N +j_0)/[\alpha h(q,m_{j_0}, g_{j_0})] \big)^{1/\rho_{j_0}} \right],$$
whose size is at most 
$$ [\operatorname{rad}(g_{j_0})]^{-1/\rho_{j_0}} [(N+j_0)^{1/\rho_{j_0}} -
(N'+j_0)^{1/\rho_{j_0}} ]
\leq 1+ \left(\frac{N - N'}{\operatorname{rad}(g_{j_0})} \right)^{1/2},
 $$
 by the concavity of the power $1/\rho_{j_0}$, the fact that $\rho_{j_0}
  \geq 2$ since $m_{j_0}$ is not divisible by $q$, which implies 
$x^{1/\rho_{j_0}} \leq 1 + \sqrt{x}$.
Now, the conditions on $n+j$ for $j \neq j_0$ imply a condition of congruence 
for $\alpha h (q, m_{j_0}, g_{j_0}) A^{\rho_{j_0}}$, modulo all the primes dividing one of the $g_j$'s for $j \neq j_0$. These primes do not divide $\alpha$, since $\alpha$ has all prime factors smaller than $k$, and $g_j$ divides $\sharp_k[(n + j)^{m_j}]$. They also do not divide $h (q, m_{j_0}, g_{j_0})$, 
since this integer has the same prime factors as $g_{j_0}$, which is prime with $g_j$. 
Hence, we get a condition of congruence 
for $A^{\rho_{j_0}}$ modulo all primes dividing $g_j$ for some $j \neq j_0$. 
For each of these primes, this gives 
at most $\rho_{j_0} \leq q$ congruence classes for $A$, and then, by the chinese reminder theorem, we get at most $q^{\omega\left(\prod_{j \neq j_0}
g_j\right)}$ classes modulo 
$\prod_{j \neq j_0} \operatorname{rad}(g_j)$, where $\omega$ denotes the number of prime factors of an integer.  
 The number of integers $A \in I$ satisfying the congruence conditions is then at most: 
 $$q^{\omega\left(\prod_{j \neq j_0}
g_j\right)}
\left[ 1 + \frac{1}{\prod_{j \neq j_0}
\operatorname{rad}(g_j)} \left(
1 + \left(\frac{N-N'}{\operatorname{rad}(g_{j_0})}
\right)^{1/2} \right) \right]
\leq [\tau(g)]^{\log q/\log 2}
\left[ 2 + \left( \frac{N - N'}{\operatorname{rad}(g)} \right)^{1/2} \right],$$
where $\tau(g)$ denotes the number of divisors of $g$. 
Now, $\alpha$ has prime factors smaller than $k$ and $p$-adic valuations smaller than $q$, which certainly gives $\alpha \leq 
(k!)^q$. Hence, by considering all the possible values of $\alpha$, and all the possible $g_1, \dots, g_k$, which  should divide $g$, we deduce 
$$ \mathcal{N}(q,k,g,N',N)
\leq    (k!)^q 
 [\tau(g)]^{k + (\log q/\log 2)}
\left[2 + \left( \frac{N - N'}{\operatorname{rad}(g)} \right)^{1/2} \right].$$
If $\mathcal{N}(q,k,g,N',N) > 0$, we have necessarily $$g \leq \prod_{j=1}^k (N+j)^{m_j} \leq (N+k)^{kq}
\leq (1+k)^{kq} N^{kq}.$$ Using the divisor bound, we deduce that for all $\epsilon > 0$, there exists $C^{(1)}_{q,k, 
\epsilon}$ such that for all $g \leq 
(1+k)^{kq} N^{kq}$, 
$$2 (k!)^q 
 [\tau(g)]^{k + (\log q/\log 2)}
 \leq C^{(1)}_{q,k, 
\epsilon} N^{\epsilon},$$
and then 
$$\mathcal{N}(q,k,g,N',N)
\leq   C^{(1)}_{q,k, 
\epsilon} N^{\epsilon}
 \left[ 1 + \left( \frac{N - N'}{\operatorname{rad}(g)} \right)^{1/2} \right],$$
i.e.
$$\mathcal{N}(q,k,g,N',N)
- C^{(1)}_{q,k, 
\epsilon} N^{\epsilon}
 \leq  C^{(1)}_{q,k, 
\epsilon} N^{\epsilon} 
(N - N')^{1/2} (\operatorname{rad}(g))^{-1/2}$$
which implies 
$$\left(\mathcal{N}(q,k,g,N',N)
- C^{(1)}_{q,k, 
\epsilon} N^{\epsilon}
\right)_+  \leq  C^{(1)}_{q,k, 
\epsilon} N^{\epsilon} 
(N - N')^{1/2} (\operatorname{rad}(g))^{-1/2}$$
since the right-hand side is nonnegative. 
Summing the square of this bound for all possible $g$ gives 
$$\sum_{g \geq 1,  q\operatorname{-th \,  power \,  free}} \left(\mathcal{N}(q,k,g,N',N)
- C^{(1)}_{q,k, 
\epsilon} N^{\epsilon}
\right)^2_+
\leq \left(C^{(1)}_{q,k, 
\epsilon}\right)^2 N^{2 \epsilon} (N-N') 
\sum_{g \geq 1,   q\operatorname{-th \,  power \,  free}}
\frac{\mathds{1}_{g \leq (1+k)^{kq} N^{kq}}}{\operatorname{rad}(g)}.$$
Now, since all numbers up to 
$(1+k)^{kq}N^{kq}$ have prime factors smaller than this quantity, we deduce, using the multiplicativity of the radical: 
\begin{align*}\sum_{g \geq 1,   q\operatorname{-th \,  power \,  free}}
\frac{\mathds{1}_{g \leq (1+k)^{kq} N^{kq}}}{\operatorname{rad}(g)}
&  \leq \prod_{p \in \mathcal{P}, 
 p \leq (1+k)^{kq}N^{kq}} \left(
 \sum_{j=0}^{q-1} \frac{1}{\operatorname{rad} (p^j)} \right)
\\ &  \leq \prod_{p \in \mathcal{P}, 
 p \leq (1+k)^{kq}N^{kq}} \left(
1 + \frac{q-1}{p} \right) \, 
 \leq \prod_{p \in \mathcal{P}, 
 p \leq (1+k)^{kq}N^{kq}} \left(
1 - \frac{1}{p} \right)^{1-q}
 \end{align*}
 which, by Mertens' theorem, is smaller 
 than a constant, depending on $k$ and $q$, 
 times $\log^{q-1}(1+ N)$. 
 We deduce that there exists a constant 
 $C^{(2)}_{q,k,\epsilon} >0$, such that 
 $$\sum_{g \geq 1,  q\operatorname{-th \,  power \,  free}} \left(\mathcal{N}(q,k,g,N',N)
- C^{(1)}_{q,k, 
\epsilon} N^{\epsilon}
\right)^2_+
\leq C^{(2)}_{q,k,\epsilon}
N^{3\epsilon} (N-N').$$
Now,  it is clear that 
$$\sum_{g \geq 1,  q\operatorname{-th \,  power \,  free}}\mathcal{N}(q,k,g,N',N) 
= N'-N,$$
since this sum counts all the integers $n$
from $N'+1$ to $N$, regrouped in function 
of the $q$-th power free part of 
$\sharp_k\left( \prod_{j=1}^k (n+j)^{m_j}
\right)$. 
Using the inequality $x^2 \leq (x-a)_+^2 + 2ax$, available for all $a, x \geq 0$, we deduce 
$$\sum_{g \geq 1,  q\operatorname{-th \,  power \,  free}}[\mathcal{N}(q,k,g,N',N)]^2 
\leq   C^{(2)}_{q,k,\epsilon}
N^{3\epsilon} (N-N') + 
2 C^{(1)}_{q,k, 
\epsilon} N^{\epsilon} (N-N').$$
This result gives the first inequality of the proposition, for 
$$C_{q,k,\epsilon}
=  C^{(2)}_{q,k,\epsilon/3}
+ 2C^{(1)}_{q,k,\epsilon/3}.$$
The second inequality is obtained by taking $N' = 0$ and dividing by $N^2$. 
\end{proof}
\begin{corollary}
For all $(m_1, \dots, m_k) \in \mathbb{Z}^k$, $\hat{\mu}_{k,N}(m_1, \dots, m_k)$ converges 
in $L^2$, and then in probability, to the corresponding Fourier coefficient of the uniform distribution $\mu_{k,q}$ on $\mathbb{U}_q^k$. In other words, $\mu_{k,N}$ converges weakly in probability to $\mu_{k,q}$. 
\end{corollary}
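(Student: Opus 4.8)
The plan is to deduce the statement directly from Proposition~\ref{qboundL2}, splitting into two cases according to the $k$-tuple $(m_1,\dots,m_k)$. First recall that the Fourier coefficient of $\mu_{k,q}$ at $(m_1,\dots,m_k)$ equals $\prod_{j=1}^k \mathbb{E}[Z_j^{m_j}]$ for $Z_1,\dots,Z_k$ i.i.d.\ uniform on $\mathbb{U}_q$, which is $1$ if $q\mid m_j$ for every $j$ and $0$ otherwise.

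If $q$ divides every $m_j$, then since each $X_{n+j}$ is a product of the $q$-th roots of unity $X_p$, it is itself a $q$-th root of unity, so $X_{n+j}^{m_j}=1$ identically; hence $\prod_{j=1}^k X_{n+j}^{m_j}=1$ for all $n$, which gives $\hat\mu_{k,N}(m_1,\dots,m_k)=1$ deterministically, equal to the corresponding Fourier coefficient of $\mu_{k,q}$. If, on the other hand, the $m_j$ are not all divisible by $q$, then Proposition~\ref{qboundL2} (applied, say, with $\epsilon=1/2$) gives $\mathbb{E}\big[\,|\hat\mu_{k,N}(m_1,\dots,m_k)|^2\,\big]\le C_{q,k,1/2}\,N^{-1/2}\to 0$; thus $\hat\mu_{k,N}(m_1,\dots,m_k)\to 0$ in $L^2$, which again matches the Fourier coefficient of $\mu_{k,q}$. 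In both cases $\hat\mu_{k,N}(m)\to\hat\mu_{k,q}(m)$ in $L^2$, and $L^2$ convergence implies convergence in probability.

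To pass from convergence of Fourier coefficients to weak convergence in probability, I would use that $\mathbb{U}_q^k$ is a finite set of $q^k$ points, so a probability measure on it is determined by its $q^k$ masses, each of which is a fixed linear combination (the inverse discrete Fourier transform on the group $(\mathbb{Z}/q\mathbb{Z})^k$) of the $q^k$ Fourier coefficients $\hat\mu(m)$, $m\in\{0,\dots,q-1\}^k$. Since each of these finitely many Fourier coefficients of $\mu_{k,N}$ converges in probability to that of $\mu_{k,q}$, the mass $\mu_{k,N}(\{x\})$ converges in probability to $\mu_{k,q}(\{x\})$ for every $x\in\mathbb{U}_q^k$, which is precisely weak convergence in probability of $\mu_{k,N}$ to $\mu_{k,q}$.

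There is no real obstacle here: the arithmetic content is entirely contained in Proposition~\ref{qboundL2}, and the corollary is a soft deduction. The only points requiring a little care are the observation that $\prod_j X_{n+j}^{m_j}$ is deterministically $1$ when all exponents are divisible by $q$ (so those Fourier coefficients need not be, and are not, covered by the $L^2$ bound), and the remark that on a finite group only finitely many Fourier coefficients need to be controlled.
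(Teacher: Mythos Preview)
Your proof is correct and follows exactly the approach implicit in the paper: the corollary is stated there without proof, as an immediate consequence of Proposition~\ref{qboundL2}, and you have simply spelled out the (straightforward) details of that deduction, including the case split on whether all $m_j$ are divisible by $q$ and the passage to weak convergence via the inverse discrete Fourier transform on the finite group $\mathbb{U}_q^k$.
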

We also have a strong law of large numbers.
\begin{proposition}
Almost surely, $\mu_{k,N}$ weakly converges to $\mu_{k,q}$. More precisely, for all $(t_1, \dots, t_k) \in (\mathbb{U}_q)^k$, the proportion of $n \leq N$ such that 
$(X_{n+1}, \dots, X_{n+k}) = (t_1, \dots, t_k)$ is almost surely $q^{-k} + O(N^{-1/2 + \epsilon})$ for all $\epsilon > 0$. 
\end{proposition}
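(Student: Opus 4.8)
The plan is to reduce the statement to the $L^2$ bound of Proposition \ref{qboundL2} via the Fourier expansion of an indicator on the finite group $\mathbb{U}_q^k$, and then to promote that $L^2$ estimate to an almost sure one using Lemma \ref{lemmaLLN}. First I would fix a target $(t_1,\dots,t_k) \in \mathbb{U}_q^k$ and write
$$\mathds{1}_{(X_{n+1},\dots,X_{n+k}) = (t_1,\dots,t_k)} = \frac{1}{q^k} \sum_{(m_1,\dots,m_k) \in \{0,\dots,q-1\}^k} \prod_{j=1}^k \left( X_{n+j}\, \overline{t_j} \right)^{m_j},$$
which is valid because $X_{n+j}$ and $t_j$ lie in $\mathbb{U}_q$ and the characters of $\mathbb{U}_q$ detect equality. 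Summing over $n \le N$ and separating the term $m = 0$, the number of $n \le N$ with $(X_{n+1},\dots,X_{n+k}) = (t_1,\dots,t_k)$ equals $N q^{-k} + q^{-k} \sum_{m \ne 0} \big( \prod_{j=1}^k \overline{t_j}^{\,m_j} \big) S_N^{(m)}$, where $S_N^{(m)} := \sum_{n=1}^N \prod_{j=1}^k X_{n+j}^{m_j}$ and the sum runs over the finite set $\{0,\dots,q-1\}^k \setminus \{0\}$, of cardinality $q^k - 1$. Thus it suffices to prove that almost surely $S_N^{(m)} = O(N^{1/2+\epsilon})$ for every such $m$ and every $\epsilon > 0$.

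For fixed $m$ in that set, the $m_j$ are not all divisible by $q$, so Proposition \ref{qboundL2} applied to the interval $\{N'+1,\dots,N\}$ gives, for every $\eta > 0$, $\mathbb{E}[|S_N^{(m)} - S_{N'}^{(m)}|^2] \le C_{q,k,\eta}(N-N') N^{\eta}$ for all $N > N' \ge 0$. I would then invoke Lemma \ref{lemmaLLN} with $A_N := S_N^{(m)}$, $\delta := \eta/2$, and an arbitrary $\epsilon > \eta/2$, to conclude that almost surely $S_N^{(m)} = O(N^{1/2+\epsilon})$; running $\eta$ (equivalently $\epsilon$) along a sequence tending to $0$ makes this hold simultaneously for all $\epsilon > 0$ on an event of full probability. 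Since there are only finitely many $m$ and finitely many targets $(t_1,\dots,t_k)$, intersecting the corresponding full-probability events and plugging the bound into the identity above shows that, almost surely, for every $(t_1,\dots,t_k) \in \mathbb{U}_q^k$ the proportion of $n \le N$ with $(X_{n+1},\dots,X_{n+k}) = (t_1,\dots,t_k)$ is $q^{-k} + O(N^{-1/2+\epsilon})$ for all $\epsilon > 0$ (each coefficient $\prod_j \overline{t_j}^{\,m_j}$ has modulus one). Because weak convergence of probability measures on the finite set $\mathbb{U}_q^k$ is equivalent to convergence of the mass of each atom, the almost sure weak convergence $\mu_{k,N} \to \mu_{k,q}$ follows.

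All the steps are routine given the earlier results; the only point that needs a little attention is the handling of the two independent "$\epsilon$-type" parameters — the loss $N^{\eta}$ in Proposition \ref{qboundL2} and the exponent $\delta$ in Lemma \ref{lemmaLLN} — and the verification that a countable intersection over $\epsilon = 1/j$ yields one almost sure statement valid for every $\epsilon > 0$. I do not expect a genuine obstacle here, since the arithmetic content has already been packaged into Proposition \ref{qboundL2}.
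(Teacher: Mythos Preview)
Your proposal is correct and follows essentially the same route as the paper: apply Lemma \ref{lemmaLLN} to the partial sums $S_N^{(m)}$ using the $L^2$ bound from Proposition \ref{qboundL2}, then invert the discrete Fourier transform on $\mathbb{U}_q^k$ and use that only finitely many $m$ are involved. Your write-up is simply more explicit than the paper's about the Fourier inversion formula and the interplay between the $\eta$ from Proposition \ref{qboundL2} and the $\delta$ in Lemma \ref{lemmaLLN}, but there is no substantive difference in method.
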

\begin{proof}
By Lemma \ref{lemmaLLN} and Proposition 
\ref{qboundL2}, we deduce that almost surely, for all $\epsilon > 0$, 
$0 \leq m_1, \dots, m_k \leq q-1$, 
$(m_1, \dots, m_k) \neq (0,0, \dots, 0)$, 
$$\hat{\mu}_{k,n}(m_1, \dots, m_k) 
= O( N^{-1/2 + \epsilon}).$$
Since we have finitely many values of $m_1, \dots, m_k$, we can take the $O$ uniform in $m_1, \dots, m_k$. Then, by inverting discrete Fourier transform on $\mathbb{U}_q^k$, we deduce the claim. 
\end{proof}
\section{More general distributions on the unit circle} \label{general}
In this section, $(X_p)_{p \in \mathcal{P}}$ are i.i.d., with any distribution on the unit circle. We will study the empirical distribution of $(X_n)_{n \geq 1}$, but not of the patterns $(X_{n+1}, \dots, X_{n+k})_{n \geq 1}$ for $k \geq 2$. 
More precisely, the goal of the section is to prove a strong law of large numbers for $N^{-1} \sum_{n=1}^N \delta_{X_n}$ when $N$ goes to infinity. We will use the following result, due to Hal\'asz, Montgomery and Tenenbaum (see \cite{Hal68}, \cite{Hal71},  \cite{GS}, \cite{M}, \cite{Te} p. 343): 
\begin{proposition} \label{HMT}
Let $(Y_n)_{n \geq 1}$ be a multiplicative function such that $|Y_n| \leq 1$ for all $n \geq 1$. For $N \geq 3, T > 0$, we set 
$$M(N,T) := \underset{|\lambda| \leq 2T} {\min}
\sum_{p \in \mathcal{P}, p \leq N} 
\frac{1 - \Re (Y_p p^{-i\lambda})}{p}.$$
Then: 
$$\left|\frac{1}{N} \sum_{n=1}^N Y_n \right|
 \leq C \left[(1 + M(N,T)) e^{- M(N,T) } + T^{-1/2} \right],$$
where $C > 0$ is an absolute constant. 
\end{proposition}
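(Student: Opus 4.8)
The plan is to prove this as the quantitative form of Hal\'asz's mean value theorem for multiplicative functions bounded by $1$, following the route of Montgomery and Tenenbaum. The argument has two essentially independent halves: an elementary Euler product computation showing that near the line $\Re s = 1$ the Dirichlet series $F(s) := \sum_{n\geq 1} Y_n n^{-s}$ is governed by $M(N,T)$, and then an analytic (Perron/Plancherel-type) step converting such a bound on $F$ into a bound on $\frac1N\sum_{n\le N} Y_n$, with the $T^{-1/2}$ term produced by balancing the truncation height.

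\textbf{Step 1 (Euler product).} Set $\sigma_0 := 1 + 1/\log N$. From the Euler product, $\log F(\sigma_0 + i\lambda) = \sum_p Y_p p^{-\sigma_0 - i\lambda} + O(1)$, the $O(1)$ absorbing the prime-power terms $p^{k}$ with $k \ge 2$, whose series converges absolutely for $\Re s > 1/2$. The tail $\sum_{p>N} p^{-\sigma_0}$ is $O(1)$, and for $p\le N$ one may replace $p^{-\sigma_0}$ by $p^{-1}$ at cost $O(1)$, since $\sum_{p\le N}(1-p^{-1/\log N})/p \le (\log N)^{-1}\sum_{p\le N}(\log p)/p = O(1)$ by Mertens' estimates. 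Hence
$$\log|F(\sigma_0 + i\lambda)| = \sum_{p\le N}\frac{\Re(Y_p p^{-i\lambda})}{p} + O(1) = \log\log N - \sum_{p\le N}\frac{1-\Re(Y_p p^{-i\lambda})}{p} + O(1),$$
so $|F(\sigma_0+i\lambda)| \ll (\log N)\exp\bigl(-\sum_{p\le N}(1-\Re(Y_p p^{-i\lambda}))/p\bigr)$ for every real $\lambda$. In particular $|F(\sigma_0+i\lambda)| \ll \log N$ uniformly, and $|F(\sigma_0+i\lambda_0)| \ll (\log N)\,e^{-M(N,T)}$ at the minimising point $\lambda_0 \in [-2T,2T]$.

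\textbf{Step 2 (from $F$ to the partial sum).} A smoothed and truncated Perron formula --- equivalently, Parseval for the Mellin transform of $u\mapsto \sum_{n\le u} Y_n$ combined with the variation bound $\bigl|\sum_{n\le u}Y_n - \sum_{n\le v}Y_n\bigr| \le |u-v|+1$ --- reduces the estimate to bounding a weighted integral of $|F(\sigma_0+it)|$ over $|t|\le T$, plus an error $O(1/T)$. The trivial bound $|F|\ll\log N$ is not enough: the decisive fact is that $|F(\sigma_0+it)|$ can be comparable to its maximum only in a short neighbourhood of the single point $\lambda_0$. This is the triangle inequality for the pretentious distance: writing $M_t := \sum_{p\le N}(1-\Re(Y_p p^{-it}))/p$ and $M^0_u := \sum_{p\le N}(1-\cos(u\log p))/p$, one has $\sqrt{M_t}+\sqrt{M_{t'}} \ge c\sqrt{M^0_{t-t'}}$, and $M^0_u$ is of order $\log(2+|u|\log N)$ once $|u|$ is bounded away from $0$; so as $t$ leaves a neighbourhood of $\lambda_0$ the exponent $M_t$ grows and $|F(\sigma_0+it)|$ decays. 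Splitting the integral into the part near $\lambda_0$, which after integrating that decay contributes $\ll (1+M(N,T))\,e^{-M(N,T)}$, and the complementary part, which is lower order, and then choosing $T$ to balance against the Perron error, gives the asserted $(1+M)e^{-M}+T^{-1/2}$. (The factor $2$ in $|\lambda|\le 2T$ is precisely the slack needed to run the triangle inequality at truncation height $T$.)

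\textbf{Main obstacle.} Step 1 is routine; all the work is in Step 2, i.e.\ in turning pointwise control of $F$ near the $1$-line into control of the mean value. Its two delicate ingredients are the concentration of the large values of $F$ near a single point (the triangle inequality for the pretentious distance, together with the Mertens-type lower bound for $M^0_u$) and the bookkeeping in the Perron step --- the choice of smoothing and of truncation height so that the error is only $O(1/T)$ while the main integral loses just the factor $(1+M)e^{-M}$. The $T^{-1/2}$ in the final bound is the unavoidable cost of that balancing, and is why the estimate is not simply $(1+M)e^{-M}$.
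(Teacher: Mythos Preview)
The paper does not prove this proposition at all: it is quoted as a known result due to Hal\'asz, Montgomery and Tenenbaum, with references to \cite{GS}, \cite{M}, and \cite{Te} (p.~343), and is then used as a black box in the proof of the next proposition. So there is no ``paper's own proof'' to compare against.

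Your sketch is recognisably the standard Montgomery--Tenenbaum route to Hal\'asz's inequality, and Step~1 is fine. Step~2, however, is too vague at the crucial points to count as a proof. You assert that the Perron error is $O(1/T)$ and that ``choosing $T$ to balance'' then yields $T^{-1/2}$, but these two claims do not match: in the actual argument the $T^{-1/2}$ arises not from a simple $O(1/T)$ truncation error but from applying Cauchy--Schwarz to the tail integral $\int_{|t|>T} |F(\sigma_0+it)/(\sigma_0+it)|\,dt$ together with the mean-value estimate $\int_{-\infty}^{\infty}|F(\sigma_0+it)|^2/(\sigma_0^2+t^2)\,dt \ll \log N$. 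Likewise, the passage from ``$|F|$ is large only near $\lambda_0$'' to the precise bound $(1+M)e^{-M}$ for the main integral requires a quantitative version of the pretentious triangle inequality and a careful dyadic decomposition in $|t-\lambda_0|$; the factor $1+M$ comes out of that integration, and you have not indicated how. As written, Step~2 is an outline of the right strategy rather than a proof; to make it complete you would need to supply the Parseval/mean-value bound for $F$ on the line $\Re s = \sigma_0$, the explicit form of the concentration estimate, and the actual computation producing $(1+M)e^{-M}$ and $T^{-1/2}$.
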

From this result, we show the following: 
\begin{proposition}
Let $(Y_n)_{n \geq 1}$ be a random multiplicative function such that $(Y_p)_{p  \in \mathcal{P}}$ are i.i.d., with $\mathbb{P} [|Y_p| \leq 1] = 1$,  $\mathbb{P} [Y_p = 1] < 1$
and  $\mathbb{P} [Y_p = - 1] < 1$. Then, almost surely,
for all $c \in (0, 1 - |\mathbb{E}[\Re(Y_2)]|)$
$$\frac{1}{N} \sum_{n=1}^N  Y_n 
 = O((\log N)^{-c}).$$
\end{proposition}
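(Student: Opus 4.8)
The plan is to apply Proposition~\ref{HMT} pathwise to the random completely multiplicative function $(Y_n)$, with a slowly growing choice of $T$, and to show that the quantity $M(N,T)$ appearing there is, almost surely and for all large $N$, of order $\log\log N$ — which is exactly what turns the Halász--Montgomery--Tenenbaum bound into a negative power of $\log N$. Write $\mu:=\mathbb{E}[Y_2]$ (this equals $\mathbb{E}[Y_p]$ for every prime $p$); since the set of admissible $c$ is an interval, it suffices to fix a rational $c\in(0,1-\Re\mu)$, a value $c'\in(c,1-\Re\mu)$, and to intersect countably many almost sure events. Take $T=T(N):=(\log N)^{2c}$, so that $T^{-1/2}=(\log N)^{-c}$; Proposition~\ref{HMT} then reduces the claim to: almost surely, $M(N,T(N))\ge c'\log\log N$ for all large $N$ (because then $(1+M)e^{-M}\ll(\log N)^{-c'}\log\log N=O((\log N)^{-c})$). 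Finally, since $n\mapsto\frac1n\sum_{m\le n}Y_m$ changes by at most $2(n'-n)/n'$ on an interval $[n,n']$, it is enough to verify this along a subsequence $(N_j)$ with $N_{j+1}=N_j+\lceil N_j(\log N_j)^{-c}\rceil$, of which there are $O((\log x)^{c+1})$ below $x$.

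For fixed $\lambda$ set $\Sigma_N(\lambda):=\sum_{p\le N}\frac{1-\Re(Y_p p^{-i\lambda})}{p}$; this is a sum of independent terms each lying in $[0,2/p]$, with expectation $\sum_{p\le N}\frac1p-\Re\mu\sum_{p\le N}\frac{\cos(\lambda\log p)}{p}-\Im\mu\sum_{p\le N}\frac{\sin(\lambda\log p)}{p}$. Trivially $\sum_{p\le N}\frac{\cos(\lambda\log p)}{p}\le\sum_{p\le N}\frac1p=\log\log N+O(1)$, while the prime number theorem (through the classical estimate $\sum_{p\le N}p^{-1-i\lambda}=-\log(i\lambda+1/\log N)+O(1)$, valid uniformly for, say, $|\lambda|\le(\log N)^{10}$, which contains $|\lambda|\le 2T$ since $c<2$) gives $\sum_{p\le N}\frac{\sin(\lambda\log p)}{p}=O(1)$ uniformly. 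Hence, when $\Re\mu\ge 0$, $\mathbb{E}[\Sigma_N(\lambda)]\ge(1-\Re\mu)\log\log N+O(1)\ge(c'+\eta)\log\log N$ for some fixed $\eta>0$ and all large $N$, uniformly in $|\lambda|\le 2T$. (When $\Re\mu<0$ the $\cos$-term runs the wrong way and one only obtains $\mathbb{E}[\Sigma_N(\lambda)]\ge(1-o(1))\log\log N$ once $|\lambda|$ is of order $1$, which suffices for $c<1$ but not beyond; pushing $c$ close to $1-\Re\mu$ in that case needs an extra device — e.g.\ bootstrapping the Halász bound via the identity $\sum_{n\le x}Y_n\log n=\sum_{p\le x}Y_p\log p\sum_{m\le x/p}Y_m+\cdots$ — which I would treat separately.)

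The decisive step is to upgrade this lower bound on the expectation into an almost sure lower bound that is \emph{uniform in} $\lambda$. For fixed $\lambda$, since $\sum_p(2/p)^2=O(1)$, Hoeffding's inequality gives $\mathbb{P}\big(\Sigma_N(\lambda)\le\mathbb{E}[\Sigma_N(\lambda)]-\tfrac\eta2\log\log N\big)\le\exp(-c_0(\log\log N)^2)$. Because $\lambda\mapsto\Sigma_N(\lambda)$ is Lipschitz with constant at most $\sum_{p\le N}\frac{\log p}{p}\ll\log N$, a net of spacing $(\log N)^{-2}$ inside $[-2T,2T]$ — of cardinality $(\log N)^{O(1)}$ — controls $\Sigma_N$ everywhere up to $O((\log N)^{-1})$; a union bound over the net therefore makes the failure probability at most $(\log N)^{O(1)}\exp(-c_0(\log\log N)^2)$, which is smaller than every fixed negative power of $\log N$. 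Summing this over the subsequence $(N_j)$, along which $\log\log N_j\asymp\log j$ so that the bound is super-polynomially small in $j$, the Borel--Cantelli lemma gives almost surely $\inf_{|\lambda|\le 2T}\Sigma_N(\lambda)\ge c'\log\log N$ for all large $N$ in the subsequence, hence $M(N,T(N))\ge c'\log\log N$, and the first paragraph concludes. The points needing care are precisely this discretisation (balancing the Lipschitz constant $\log N$ against the concentration scale $\log\log N$), the use of an auxiliary slowly-growing subsequence so that Borel--Cantelli applies, and the handling of $\lambda$ of order $1$ when $\Re\mu<0$.
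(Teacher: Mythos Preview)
Your approach is essentially the same as the paper's: apply the Hal\'asz--Montgomery--Tenenbaum inequality, lower-bound $M(N,T)$ by splitting $Y_p=\mu+(Y_p-\mu)$ into a deterministic mean part and a centered random part, control the random part uniformly in $\lambda$ via Hoeffding plus a Lipschitz/net argument (the paper uses spacing $(\log N)^{-1}$, you use $(\log N)^{-2}$), apply Borel--Cantelli along a sparse subsequence, and interpolate to all $N$. The only cosmetic differences are your choice $T=(\log N)^{2c}$ versus the paper's fixed $T=\tfrac12(\log N)^{10}$, and your subsequence $N_{j+1}=N_j+\lceil N_j(\log N_j)^{-c}\rceil$ versus the paper's $N_k=\lfloor e^{k^{1/5}}\rfloor$.

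Your caveat about the case $\Re\mu<0$ is well taken --- and in fact the paper's own proof has exactly the same gap, though it does not acknowledge it. The paper asserts that for any $\rho$ with $\Re\rho\in[-1,1)$,
\[
\min_{|\lambda|\le\log^{10}N}\sum_{p\le N}\frac{1-\Re(\rho)\cos(\lambda\log p)}{p}\ \ge\ \sum_{p\le N}\frac{1-\Re(\rho)}{p},
\]
but this inequality is equivalent to $\Re(\rho)\sum_{p\le N}\frac{1-\cos(\lambda\log p)}{p}\ge 0$ for all $\lambda$, which holds only when $\Re(\rho)\ge 0$. When $\Re(\rho)<0$, choosing $\lambda$ of order~$1$ makes the cosine sum $O(1)$, so the deterministic minimum drops to $(1+o(1))\log\log N$, and Hal\'asz alone yields only $c<1$, exactly as you say. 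So your proposal matches the paper's argument and, if anything, is more scrupulous about its range of validity; the bootstrapping idea you mention (or a direct appeal to stronger mean-value theorems) would be needed to reach the full range $c<1-\Re\mu$ in the case $\Re\mu<0$, and the paper does not supply this either.
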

\begin{proof}
First, we observe that for $1 < N' < N$ integers, $\lambda > 0$,
$$\sum_{p \in \mathcal{P}, N' < p \leq N} 
p^{-1 - i\lambda} = 
\int_{N'}^{N} \frac{d \theta(x)}{x^{1+i \lambda} \log x} = \left[ \frac{\theta(x)}{x^{1+i \lambda} \log x} \right]_{N'}^N 
+ \int_{N'}^{N} \left( \frac{(1+i \lambda)}{x^{2 + i \lambda} \log x}
+ \frac{1}{x^{1 + i \lambda} \, x \log^2 x} \right) \theta(x) dx,$$

where, by a classical refinement of the prime number theorem,  
$$\theta(x) := \sum_{p \in \mathcal{P}, 
p \leq x} \log p = x + O_A(x/\log^{A} x)$$
for all $A > 1$.  
The bracket is dominated by $1/\log (N')$,  the second part of the last integral is dominated by 
$$\int_{N'}^{\infty} \frac{dx}{x \log^2 x} = \int_{\log N'}^{\infty} \frac{dy}{y^2} = 1/\log (N'),$$
and the error term of the first part is dominated by $(1+\lambda)/ \log^{A} (N')$. Hence
$$\sum_{p \in \mathcal{P}, N' < p \leq N}  p^{-1-i\lambda}= I_{N',N, \lambda} +  O_A \left( \frac{1}{\log N'} + 
\frac{\lambda}{ \log^{A} N'} \right),$$
where
$$I_{N',N, \lambda}  = (1+i\lambda) \int_{N'}^{N} \frac{dx}{x^{1 + i \lambda} \log x} = 
(1+i\lambda) \int_{\lambda \log N'}^{ \lambda \log N} 
\frac{e^{-i  y}}{y} dy. 
$$
Now, for all $a \geq 1$, 
$$\int_{a}^{\infty} \frac{e^{-i  y}}{y} dy = \left[ \frac{e^{-i  y}}{-iy} \right]_a^{\infty} 
- \int_{a}^{\infty} \frac{e^{-i  y}}{iy^2} dy = O(1/a),$$
which gives 
$$I_{N',N, \lambda} 
=  \int_{\lambda \log N'}^{ \lambda \log N} 
\frac{e^{-i  y}}{y} dy 
+ O(1/\log N').$$
Now, the integral of $(\sin y)/y$ on $\mathbb{R}^*_+$ is  conditionally convergent: $(\sin y)/y$ tends to $1$ when $y \rightarrow 0$ and 
the convergence of the integral at $\infty$ is easily deduced from an integration by parts.
Hence, the integral of $(\sin y/y)$ on any interval of $\mathbb{R}^*_+$ is uniformly bounded, which implies 
$$\Im(I_{N',N,\lambda}) = O(1).$$
We deduce 
$$\Im \left(\sum_{ p \in \mathcal{P}, 
N' < p \leq N}   p^{-1-i\lambda} \right)
= O_A \left( 1 + \frac{\lambda}{\log^A N'} \right).$$
Bounding the sum on primes smaller than $N'$ by taking the absolute value, we get: 
$$\left|\Im \left(\sum_{ p \in \mathcal{P}, 
 p \leq N}
 p^{-1-i\lambda} \right) \right|
\leq   \log \log (3+N') + O_A \left( 1 +  \frac{\lambda}{\log^A N'} \right),$$
and then by taking $N' = e^{(\log N)^{10/A}}$, for $N$ large enough depending on $A$, 
$$\left| \Im  \left(\sum_{ p \in \mathcal{P}, 
 p \leq N}
 p^{-1-i\lambda} \right) \right|
 \leq \frac{ 10\log \log N}{A} + O_A \left( 1 + \frac{\lambda}{\log^{10} N} \right),
 $$
 $$\underset{N \rightarrow \infty}{\lim \sup}  \sup_{0 < \lambda \leq \log^{10} N}
(\log \log N)^{-1} \left|\Im \left(\sum_{ p \in \mathcal{P}, 
 p \leq N}
 p^{-1-i\lambda} \right) \right|
 \leq 10/A,$$
 and then by letting $A \rightarrow \infty$ and using the symmetry of the imaginary part for $\lambda \mapsto -\lambda$, 
 $$ \sup_{|\lambda| \leq \log^{10} N} \left|\Im \left(\sum_{ p \in \mathcal{P}, 
 p \leq N}
 p^{-1-i\lambda} \right) \right|
 = o(\log \log N)$$
 for $N \rightarrow \infty$. This estimate can also be deduced from known bounds on the Riemann zeta function on the line $\Re = 1 + 1/\log N$. 
 Now,  for all $\rho$ whose real part 
 is in $[-1,1)$, we have 
 \begin{align*}\min_{|\lambda| \leq \log^{10} N}  \sum_{p \in \mathcal{P}, p \leq N}
 \frac{1 - \Re(\rho \, p^{-i \lambda})}{p} 
 & \geq
  \min_{|\lambda| \leq \log^{10} N}  \sum_{p \in \mathcal{P}, p \leq N}
 \frac{1 - \Re(\rho) \Re( p^{-i \lambda})}{p} 
\\ & -  \max_{|\lambda| \leq \log^{10} N}   \left| \sum_{p \in \mathcal{P}, p \leq N}
 \frac{\Im(\rho) \Im( p^{-i \lambda})}{p}  \right|.
 \end{align*}
  The first term is at least the sum of 
  $1 - |\Re(\rho)|$ divided by $p$, and then at least $[1 - |\Re(\rho)| + o(1)] \log \log N$. The second term is $o(\log  \log N)$ by the previous discussion. Hence,
  \begin{equation}\min_{|\lambda| \leq \log^{10} N}  \sum_{p \in \mathcal{P}, p \leq N}
 \frac{1 - \Re(\rho \, p^{-i \lambda})}{p}  \geq [1 - |\Re(\rho)| + o(1)] \log \log N. \label{eqrho}
 \end{equation}
 Now, let $\rho := \mathbb{E}[Y_2]$, and 
 $Z_{p,\lambda}  := \Re[(Y_p - \rho)
 p^{-i\lambda}]$. The variables 
 $(Z_{p,\lambda})_{p \in \mathcal{P}}$ are centered, independent, 
  bounded by  $2$. By Hoeffding's lemma (see, for example, Massart \cite{Massart}, p. 21), 
 for all $u \geq 0$, 
 $$\mathbb{E}[e^{u Z_{p,\lambda}/p}]
 \leq e^{2(u/p)^2},$$
 and then by independence, 
 $$\mathbb{E}[e^{u \sum_{p \in \mathcal{P},  p \leq N} Z_{p,\lambda}/p}] \leq e^{2u^2 \sum_{p \in \mathcal{P}, p \leq N} p^{-2}} \leq e^{2u^2 (\pi^2/6)} \leq e^{4u^2},$$
\begin{align*} \mathbb{P} \left[
 \sum_{p \in \mathcal{P},  p \leq N} \frac{Z_{p,\lambda}}{p} 
 \geq (\log \log N)^{3/4} \right] 
& \leq e^{-(\log \log N)^{3/2}  / 8} 
 \mathbb{E} \left[ e^{(1/8)(\log \log N)^{3/4}  \sum_{p \in \mathcal{P},  p \leq N} \frac{Z_{p,\lambda}}{p} } \right] 
\\ & \leq e^{-(\log \log N)^{3/2}  / 8} 
e^{4 [(1/8)(\log \log N)^{3/4}]^2} 
= e^{- (\log \log N)^{3/2}  / 16}.
\end{align*} 
Applying the same inequality to $-Z_{p, \lambda}$, we deduce 
$$\mathbb{P} \left[ \left|
 \sum_{p \in \mathcal{P},  p \leq N} \frac{\Re[(Y_p- \rho)p^{-i\lambda}] }{p}  \right|
 \geq (\log \log N)^{3/4} \right] 
  \leq 2 e^{- (\log \log N)^{3/2}  / 16},$$
$$\mathbb{P} \left[ \max_{|\lambda| \leq 
\log^{10} N,  \lambda \in (\log^{-1} N) \mathbb{Z} }  \left|
 \sum_{p \in \mathcal{P},  p \leq N} \frac{\Re[(Y_p- \rho)p^{-i\lambda}] }{p}  \right|
 \geq (\log \log N)^{3/4} \right]
 = O\left( \log^{11}N \, e^{-  (\log \log N)^{3/2} /16} \right).$$
 The derivative of the last sum  
  in $p$ with respect to $\lambda$ is dominated by 
  $$ \sum_{p \in \mathcal{P}, p \leq N} 
 \frac{\log p}{p} = O(\log N)$$
 and then the sum cannot vary more than 
 $O(1)$ when $\lambda$ runs between two consecutive multiples of $\log^{-1} N$. 
 Hence, 
 \begin{align*}
 \mathbb{P} \left[ \max_{|\lambda| \leq 
\log^{10} N} \left|
 \sum_{p \in \mathcal{P},  p \leq N} \frac{\Re[(Y_p- \rho)p^{-i\lambda}] }{p}  \right|
 \geq (\log \log N)^{3/4} + O(1) \right] 
 &  = O \left( (\log N)^{11 - \sqrt{\log \log N}/16} \right)
 \\&  = O(\log^{-10} N).
 \end{align*}
 If we define, for $k \geq 1$, $N_k$ as the integer part of $e^{k^{1/5}}$, we deduce, by Borel-Cantelli lemma, that almost surely, for all but finitely many $k \geq 1$, 
 $$\max_{|\lambda| \leq 
\log^{10} N_k} \left|
 \sum_{p \in \mathcal{P},  p \leq N_k} \frac{\Re[(Y_p- \rho)p^{-i\lambda}] }{p}  \right| \leq (\log \log N_k)^{3/4} + O(1).$$
 If this event occurs, we deduce, using \eqref{eqrho}, 
 $$\min_{|\lambda| \leq \log^{10} N_k}  \sum_{p \in \mathcal{P}, p \leq N}
 \frac{1 - \Re(Y_p \, p^{-i \lambda})}{p}  \geq [1 - |\Re(\rho)| + o(1)] \log \log N_k.$$
 Then, by Proposition  \ref{HMT}, we get 
 $$\left|\frac{1}{N_k}  
 \sum_{n=1}^{N_k} Y_n \right| 
 \leq C \left[ \left( 1 + 
 [1 - |\Re(\rho)| + o(1)] \log \log N_k
 \right) (\log N_k)^{-(1 - |\Re(\rho)|) + o(1)}  + \sqrt{2} \, \log^{-5} N_k \right].$$
Since $- (1 - |\Re(\rho)|) \geq -1 > -5$, we deduce 
$$\left|\frac{1}{N_k}  
 \sum_{n=1}^{N_k} Y_n \right| = 
 O((\log N_k)^{-(1 - |\Re(\rho)|) + o(1)}),$$
 which gives the claimed result along the sequence $(N_k)_{k \geq 1}$. 
 Now, if $N \in [N_k, N_{k+1}]$, we have, since all the $Y_n$'s have modulus at most $1$, 
 \begin{align*} 
 \left|\frac{1}{N_k}  
 \sum_{n=1}^{N_k} Y_n 
 - \frac{1}{N}  \sum_{n=1}^{N} Y_n 
 \right|
 & \leq 
\left| \frac{1}{N}  \sum_{n=N_k+1}^N Y_n
\right| + \left( \frac{1}{N_k} - \frac{1}{N} \right)  \left|\sum_{n=1}^{N_k} Y_n \right|
\leq \frac{N - N_k}{N} + N_k \left(\frac{1}{N_k} - \frac{1}{N} \right)
\\ & = \frac{2(N - N_k)}{N}
 \leq  \frac{2 (e^{(k+1)^{1/5}} - e^{k^{1/5}} + 1)}{e^{k^{1/5}} - 1}
 = O \left(e^{(k+1)^{1/5} - k^{1/5}} - 1 + e^{-k^{1/5}} \right) 
 \\ & = O( k^{-4/5} ) = O(\log^{-4} N).
\end{align*}
This allows to remove the restriction to the sequence $(N_k)_{k \geq 1}$. 
\end{proof}
Using Fourier transform, we deduce a law of large numbers for the empirical measure $\mu_{N} = \frac{1}{N} \sum_{n=1}^N \delta_{X_n}$, under the assumptions of this section. 
\begin{proposition}
If for all integers $q \geq 1$, $\mathbb{P} [X_2 \in \mathbb{U}_q] < 1$, then almost surely, $\mu_N$ tends to the uniform measure on the unit circle. 
\end{proposition}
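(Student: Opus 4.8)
The plan is to reduce to the Fourier/Weyl criterion on the compact group $\mathbb{U}$: the empirical measure $\mu_N$ converges weakly to the uniform (Haar) measure on $\mathbb{U}$ if and only if, for every nonzero integer $m$, the Fourier coefficient
$$\hat{\mu}_N(m) = \frac{1}{N}\sum_{n=1}^N X_n^m$$
tends to $0$ as $N \to \infty$. So I would fix $m \in \mathbb{Z}\setminus\{0\}$ and try to control this single Fourier coefficient using the previous proposition.

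First I would set $Y_n := X_n^m$ for $n \ge 1$. Since $(X_n)_{n \ge 1}$ is completely multiplicative, so is $(Y_n)_{n \ge 1}$, and $(Y_p)_{p \in \mathcal{P}} = (X_p^m)_{p \in \mathcal{P}}$ are i.i.d.\ with $|Y_p| = 1 \le 1$ almost surely. The key observation is that $Y_p = X_p^m = 1$ holds precisely when $X_p \in \mathbb{U}_{|m|}$; since $X_p$ has the same law as $X_2$, the hypothesis $\mathbb{P}[X_2 \in \mathbb{U}_{|m|}] < 1$ yields $\mathbb{P}[Y_p = 1] < 1$. Consequently $\Re(Y_2) \le 1$ with strict inequality on an event of positive probability, so $1 - \mathbb{E}[\Re(Y_2)] > 0$, and the interval $(0, 1 - \mathbb{E}[\Re(Y_2)])$ is nonempty.

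Then I would apply the previous proposition to the random multiplicative function $(Y_n)_{n \ge 1}$: almost surely, for every $c \in (0, 1 - \mathbb{E}[\Re(Y_2)])$,
$$\hat{\mu}_N(m) = \frac{1}{N}\sum_{n=1}^N Y_n = O((\log N)^{-c}),$$
and in particular $\hat{\mu}_N(m) \to 0$ almost surely. Since this holds for each fixed $m \ne 0$ and $\mathbb{Z}\setminus\{0\}$ is countable, almost surely $\hat{\mu}_N(m) \to 0$ simultaneously for all $m \ne 0$; combined with the trivial identity $\hat{\mu}_N(0) = 1$, Weyl's criterion gives that $\mu_N$ converges weakly to the uniform measure on $\mathbb{U}$ with probability one.

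I do not expect a serious obstacle here once the earlier proposition is granted; the only point requiring care is the translation of the hypothesis, namely that "$X_2$ is not supported on any finite subgroup $\mathbb{U}_q$" is exactly what forces $\mathbb{P}[X_2^m = 1] < 1$ for every $m \ne 0$, i.e.\ the nondegeneracy condition $\mathbb{P}[Y_2 = 1] < 1$ that is needed to invoke the Halász--Montgomery--Tenenbaum estimate with a positive exponent. (If instead $X_2 \in \mathbb{U}_q$ almost surely for some $q$, then taking $m = q$ gives $Y_n \equiv 1$ and the corresponding Fourier coefficient does not vanish, consistent with $\mu_N$ converging to uniform measure on $\mathbb{U}_q$ rather than on $\mathbb{U}$.)
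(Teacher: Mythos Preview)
Your proof is correct and follows essentially the same approach as the paper: set $Y_n = X_n^m$, use the hypothesis to verify $\mathbb{P}[Y_2=1]<1$, apply the previous proposition to get $\hat{\mu}_N(m)\to 0$ almost surely for each $m\neq 0$, and conclude by the Weyl/Fourier criterion. The paper's own proof is a two-sentence version of exactly this argument; your write-up simply makes the verification of the nondegeneracy condition and the passage to all $m$ via countability more explicit.
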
 
\begin{proof}
For all $m \neq 0$, $X_2^m$ takes its values on the unit circle, and it is not a.s. equal to $1$. Applying the previous proposition to $Y_n = X_n^m$, we deduce that $\hat{\mu}_N(m)$ tends to zero almost surely, which gives the desired result. 
\end{proof}
\begin{proposition}
If for $q \geq 2$, $X_2 \in \mathbb{U}_q$ almost surely, but $\mathbb{P} [X_2 \in \mathbb{U}_r] < 1$ for all strict divisors $r$ of $q$, then almost surely, $\mu_N$ tends to the uniform measure on $\mathbb{U}_q$. More precisely, almost surely, for all $t \in \mathbb{U}_q$, the proportion of $n \leq N$ such that $X_n = t$ is $q^{-1} + O((\log N)^{-c})$, as soon as 
$$c < \inf_{1 \leq m \leq q-1}
\left(1 - \mathbb{E}[\Re(X_2^m)] \right),$$
this infimum being strictly positive. 
\end{proposition}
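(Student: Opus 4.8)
The plan is to reduce this statement to the proposition proved just above (applied to the completely multiplicative functions $Y_n = X_n^m$ for $1 \le m \le q-1$), combined with discrete Fourier inversion on the cyclic group $\mathbb{U}_q$.

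First I would record the elementary fact that, under the hypotheses, $\mathbb{P}[X_2^m = 1] < 1$ for every $m$ with $1 \le m \le q-1$. Since $X_2 \in \mathbb{U}_q$ almost surely, the event $\{X_2^m = 1\}$ coincides with $\{X_2 \in \mathbb{U}_q \cap \mathbb{U}_m\} = \{X_2 \in \mathbb{U}_{\gcd(q,m)}\}$, and $\gcd(q,m)$ is a strict divisor of $q$ because $\gcd(q,m) \le m \le q-1$; the assumption on strict divisors then gives $\mathbb{P}[X_2 \in \mathbb{U}_{\gcd(q,m)}] < 1$. In particular $\Re(X_2^m) \le 1$ with strict inequality on an event of positive probability, so $1 - \mathbb{E}[\Re(X_2^m)] > 0$; since there are only finitely many $m$, the infimum appearing in the statement is strictly positive.

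Next, for each fixed $m \in \{1, \dots, q-1\}$ I would apply the preceding proposition to $Y_n := X_n^m$, which is a random completely multiplicative function with $(Y_p)_{p \in \mathcal{P}}$ i.i.d., $\mathbb{P}[|Y_p| \le 1] = 1$ and $\mathbb{P}[Y_p = 1] = \mathbb{P}[X_2^m = 1] < 1$ by the previous step. This yields that, almost surely, for every $c < 1 - \mathbb{E}[\Re(X_2^m)]$,
$$\hat{\mu}_N(m) = \frac{1}{N} \sum_{n=1}^N X_n^m = O\big((\log N)^{-c}\big).$$
Intersecting these finitely many almost sure events over $m = 1, \dots, q-1$, and using $\hat{\mu}_N(0) = 1$, we obtain that almost surely, for every $c < \inf_{1 \le m \le q-1}(1 - \mathbb{E}[\Re(X_2^m)])$, all of $\hat{\mu}_N(1), \dots, \hat{\mu}_N(q-1)$ are $O((\log N)^{-c})$, uniformly in $m$ (there being only finitely many indices).

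Finally I would invert the Fourier transform on $\mathbb{U}_q$. Since $X_p \in \mathbb{U}_q$ almost surely, every $X_n = \prod_p X_p^{v_p(n)}$ lies in $\mathbb{U}_q$, so $\mu_N$ is supported on $\mathbb{U}_q$; for $t \in \mathbb{U}_q$, writing $\mathds{1}_{X_n = t} = q^{-1} \sum_{m=0}^{q-1} X_n^m \bar{t}^{\,m}$ and averaging over $n \le N$ gives
$$\frac{1}{N} \#\{ n \le N : X_n = t \} = \frac{1}{q} \sum_{m=0}^{q-1} \bar{t}^{\,m}\, \hat{\mu}_N(m) = \frac{1}{q} + O\big((\log N)^{-c}\big),$$
which is exactly the asserted rate; summing over $t \in \mathbb{U}_q$ shows $\mu_N$ tends to the uniform measure on $\mathbb{U}_q$. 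I do not expect any genuine obstacle here: the argument is essentially bookkeeping on top of the previous proposition, and the only point requiring a moment's thought is the identification $\{X_2^m = 1\} = \{X_2 \in \mathbb{U}_{\gcd(q,m)}\}$ together with the observation that $\gcd(q,m)$ is a \emph{proper} divisor of $q$ when $1 \le m \le q-1$, which is precisely what makes the hypothesis on strict divisors applicable and forces the limiting measure to be uniform on all of $\mathbb{U}_q$ rather than on a proper subgroup.
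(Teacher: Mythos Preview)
Your proof is correct and follows exactly the same approach as the paper: verify $\mathbb{P}[X_2^m=1]<1$ for $1\le m\le q-1$, apply the preceding proposition to $Y_n=X_n^m$, and finish by discrete Fourier inversion on $\mathbb{U}_q$. You have simply spelled out in full the details (the identification $\{X_2^m=1\}=\{X_2\in\mathbb{U}_{\gcd(q,m)}\}$ and the explicit inversion formula) that the paper leaves implicit.
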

\begin{proof}
The infimum is strictly positive since by assumption, $\mathbb{P}[X_2^m = 1] < 1$ for all $m \in \{1, \dots, q-1\}$. 
Now, we apply the previous result to $Y_n = X_n^m$ for all $m \in \{1, \dots, q-1\}$, and we get the claim after doing a discrete Fourier inversion. 
\end{proof}
 

\begin{thebibliography}{99}
\bibitem{B} 
A. Baker: \emph{Bounds for solutions of hyperelliptic equations},
Proc. Cambridge Phil. Soc.
\textbf{65}
(1969), 439--444.
\bibitem{BJ}
 H. Bohr and B. Jessen:  \emph{\"Uber die Wertverteilung der Riemannschen Zetafunktion, Erste Mitteilung }, Acta Math. \textbf{54} (1930), 1--35, \emph{Zweite Mitteilung}, Acta Math. \textbf{58} (1932), 1--55.
\bibitem{Ch}
 S. Chowla: \emph{The Riemann hypothesis and Hilbert's tenth problem}, Gordon and Breach, New York, 1965.
\bibitem{D}
A. Dubickas: \emph{A note on the multiplicative dependence of consecutive integers}, Scient. works of Lith. Math. Soc.: suppl. to "Liet. Matem. Rink.", Technika, Vilnius (1998), 21--23.
\bibitem{GS}
A. Granville, K. Soundararajan: \emph{Decay of Mean Values of
Multiplicative Functions}, Canad. J. Math. \textbf{55} (2003), 1191--1230.
\bibitem{HP}
L. Hajdu, A. Pint\'er: \emph{Square product of three integers in short intervals}, Math. of Computation \textbf{68} (1999), 1299--1301. 
\bibitem{Hal68}
G. Hal\'asz: \emph{\"Uber die Mittelwerte multiplikativer zahlentheoretischer Funktionen},  Acta Math. Acad. Sci. Hung. \textbf{19} (1968), 365--403.
\bibitem{Hal71}
G. Hal\'asz: \emph{On the distribution of additive and the mean values of multiplicative arithmetic functions}, Studia Sci. Math. Hung. \textbf{6} (1971), 211--233. 
\bibitem{Hal}
G. Hal\'asz: \emph{On  random  multiplicative  functions}.  In
Hubert  Delange  Colloquium  (Orsay,
1982), Publications Math\'ematiques d'Orsay
\textbf{83}, 74--96. Univ. Paris XI, Orsay,  1983.
\bibitem{Harper}
A. Harper: \emph{Moments of random multiplicative functions, I: Low moments, better than squareroot cancellation, and critical multiplicative chaos}, Forum of 
Mathematics, Pi \textbf{8} (2020). 
\bibitem{Harper2}
A. Harper: \emph{Moments of random multiplicative functions, II: High moments}, Algebra Number Theory \textbf{13}, no. 10 (2019), 2277--2321. 
\bibitem{HNR}
A. Harper, A. Nikeghbali, M. Radziwi\l \l: 
\emph{A note on Helson's conjecture on moments of random multiplicative functions}, preprint. To appear in "Analytic Number Theory" in honor of Helmut Maier's 60th birthday. 
\bibitem{HL}
W. Heap, S. Lindqvist: \emph{Moments of random multiplicative functions and truncated characteristic polynomials}, 
preprint (2015). arXiv:1505.03378
\bibitem{H}
H. Helson: \emph{Hankel Forms},
Studia Math. \textbf{198} (2010), 79--84. 
\bibitem{Hil}
A. Hildebrand: \emph{On consecutive values of the Liouville function}, Enseign. Math. (2) \textbf{32} (1986), 219--226.
\bibitem{Jarvis} 
F. Jarvis: \emph{Algebraic Number Theory}. Springer, 2014. 
\bibitem{LTW}
Y.-K. Lau, G. Tenenbaum, J. Wu: \emph{On mean values of random multiplicative functions}, Proceedings of the Amer. Math. Soc. 
\textbf{141}, no. 2 (2013), 409--420. 
\bibitem{Massart}
P. Massart: \emph{Concentration Inequalities and Model Selection}. Ecole d'Et\'e de Probabilit\'es de Saint-Flour XXXIII. Springer, 2003. 
\bibitem{MRT}
K. Matom\"aki, M. Radziwill, T. Tao, \emph{Sign patterns of the Liouville and M\"obius functions}, preprint (2015). arXiv:1509.01545
\bibitem{M}
H.-L. Montgomery: \emph{A note on the mean values of multiplicative functions}, Inst. Mittag-Leffer, Report
\textbf{17} (1978).
\bibitem{Rosen}
K. Rosen: \emph{Elementary number theory and its applications}. Addison-Wesley Pub. Co., 1984. 
\bibitem{RS}
J. B. Rosser, L. Schoenfeld: \emph{Approximate formulas for some functions of prime numbers}, Illinois J. Math \textbf{6} (1962), 64--94. 
\bibitem{Sh}
T.-N. Shorey: \emph{On 
linear forms in the logarithms of algebraic numbers}, Acta 
Arith. \textbf{30} (1976-77), 27--42. 
\bibitem{TT1}
T. Tao, J. Ter\"av\"ainen: \emph{The structure of logarithmically averaged correlations of multiplicative functions, with applications to the Chowla and Elliott conjectures}, Duke Math. Journal \textbf{168}, no. 11 (2019), 1977--2027. 
\bibitem{TT2}
T. Tao, J. Ter\"av\"ainen: \emph{Odd order cases of the logarithmically averaged Chowla conjecture}, Journal de Th\'eorie des Nombres de Bordeaux \textbf{30} (2017), 
997--1015.
\bibitem{TT3}
T. Tao, J. Ter\"av\"ainen: \emph{The structure of correlations of multiplicative functions at almost all scales, with applications to the Chowla and Elliott conjectures}, Algebra 
and Number Theory \textbf{13} (2019), 2103--2150. 
\bibitem{T}
J. Turk: \emph{Multiplicative properties of integers in short intervals}, Indag. Math. (Proceedings) \textbf{83} (1980), 429--436.
\bibitem{Te}
G. Tenenbaum, \emph{Introduction
 to analytic and probabilistic
number theory}.
Cambridge University Press,
1995. 
\bibitem{W}
A. Wintner, \emph{Random factorizations and Riemann hypothesis},
Duke Mathematical Journal \textbf{11} (1944), 267--275. 
 \end{thebibliography}
\end{document}